\documentclass[opre,nonblindrev]{informs4} 

\OneAndAHalfSpacedXII 

\usepackage{natbib}
\usepackage{multirow}
\usepackage{booktabs}
 \bibpunct[, ]{(}{)}{,}{a}{}{,}%
 \def\bibfont{\small}%
\TheoremsNumberedThrough     

\EquationsNumberedThrough    
\ECRepeatTheorems
\usepackage{amsmath}
\usepackage{graphicx}
\usepackage{bbm}
\usepackage{amssymb} 
\usepackage{amsfonts}
\usepackage{datetime}
\usepackage{mathtools}
\usepackage{comment}
\usepackage{xcolor}
\usepackage{threeparttable}
\usepackage{algorithm}
\usepackage{algorithmic}
\usepackage{endnotes}
\usepackage{tikz}
\usepackage{caption}
\usepackage{subcaption} 
 
\usetikzlibrary{positioning, calc}

\tikzset{
    node_point/.style={circle, fill=black, inner sep=1.5pt, outer sep=2pt},
    hyedge/.style={draw=#1!80, fill=#1!20, thick, smooth cycle, tension=0.8, fill opacity=0.6}
}
\usepackage[colorinlistoftodos]{todonotes}
\definecolor{links}{RGB}{204,36,29}
\usepackage[colorlinks=true,breaklinks=true,bookmarks=true,urlcolor=links,citecolor=links,linkcolor=links,bookmarksopen=false,draft=false]{hyperref} 
\usepackage{cleveref}  

\newcommand{\M}{\mathcal{M}}

\def \p {\boldsymbol{p}}

\def \x {\boldsymbol{x}}

\def \C {\mathcal{C}}

\def \w {\boldsymbol{w}}
\def \v {\boldsymbol{v}}

\def \bbmr {\mathbbm{R}}

\def \y {\boldsymbol{y}}

\makeatletter
\newcommand*{\rom}[1]{\expandafter\@slowromancap\romannumeral #1@}
\makeatother

\def \z {\boldsymbol{z}}
\def \w {\boldsymbol{w}}
\def \ind {\mathbbm{1}} 

\def\argmax{\mathop{\mathrm{argmax}}}
\def\argmin{\mathop{\mathrm{argmin}}}
\def \R {\mathcal{R}}

\DeclareMathOperator{\proj}{proj}

\DeclareMathOperator{\conv}{conv}

\DeclareMathOperator{\for}{ for }
\DeclareMathOperator{\st}{s.t.}

\DeclareMathOperator{\G}{\mathrm{G}}
\DeclareMathOperator{\E}{\mathrm{E}}

 \renewcommand{\theARTICLETOP}{}

\begin{document}



\RUNTITLE{MIP for logit multi-purchase choice models}

\TITLE{On MIP Formulations for Logit-Based Multi-Purchase Choice Models and Applications}

\ARTICLEAUTHORS{
\AUTHOR{Taotao He, Zhongqi Wu, Yating Zhang}
\AFF{Antai College of Economics and Management, Shanghai Jiao Tong University, Shanghai 200030, China.}
{\small Contact:  hetaotao@sjtu.edu.cn 
 (\href{https://taotaoohe.github.io/}{TH}), wuzhongqi@sjtu.edu.cn, ytzhang20@sjtu.edu.cn (\href{https://zhangyt00.github.io/}{YZ})} 
} 

\ABSTRACT{
We study logit-based multi-purchase choice models and develop an exact solution methodology for the resulting assortment optimization problems, which we show are NP-hard to approximate. We introduce a hypergraph representation that captures general bundle-based choice structures and subsumes several models in the literature, including the BundleMVL-K and multivariate MNL models~\citep{tulabandhula2023multi, jasin2024assortment}. Leveraging this representation, we derive mixed-integer programming (MIP) formulations by integrating polyhedral relaxations from multilinear optimization with a perspective reformulation of the logit choice model. Our approach preserves the strength of the underlying polyhedral relaxations, yielding formulations with provably tighter linear programming (LP) bounds than the prevalent Big-M approach. We further characterize structural conditions on the hypergraph under which the formulations are locally sharp, thereby generalizing existing LP characterizations for path-based models. The framework extends naturally to heterogeneous and robust settings. Computational experiments demonstrate that the proposed formulations significantly improve both solution quality and scalability.
}

\KEYWORDS{Logit choice models; Multi-purchase behavior; Assortment optimization; Multilinear optimization; Perspective formulation}

\maketitle

\section{Introduction} 
Customers frequently purchase a basket of multiple items within a single transaction. This multi-purchase behavior has been consistently documented across a wide range of empirical settings~\cite[e.g.,][]{Manchanda1999ShoppingBasket,russell2000analysis}. Incorporating such multi-purchase behavior into operational decisions, such as assortment planning and pricing, is crucial. By capturing cross-selling opportunities and inter-product dependencies, firms can improve demand prediction and increase total revenue~\citep{tulabandhula2023multi,jasin2024assortment}.

One prevalent approach to model multi-purchase behavior is logit-based models, referred to as Logit-MP models and formally introduced in Section~\ref{sec:logit:model} (see a comprehensive review of other models in Section~\ref{sec:literature}). These models generalize the Multinomial Logit (MNL) model by allowing alternatives to be subsets (or bundles) of products. Notable examples include the Multivariate logit (MVMNL)~\citep{aurier2014multivariate,jasin2024assortment}, cardinality-restricted bundle logits (e.g., BundleMVL-K)~\citep{tulabandhula2023multi}. 
Theoretically, for bundles of cardinality two, these models admit a rigorous axiomatic characterization \citep{chambers2024correlated} and can be derived via spatial statistics frameworks \citep{russell2000analysis}. 
 Empirically, these models have been evaluated and calibrated using Maximum Likelihood Estimation (MLE)~\citep{aurier2014multivariate}. For large-scale applications, recent works achieve estimation scalability through regularization \citep{vasilyev2025assortment} or sequential choice models with latent embeddings \citep{ruiz2020shopper}. Consequently, these models have become a standard tool in customer behavior analysis, including cross-category effects~\citep{song2006measuring,kwak2015analysis}, promotion effects~\citep{pan2025multicategory}, and price competition~\citep{richards2018retail}.

Motivated by their robust empirical performance, recent studies have sought to integrate these models into prescriptive decision-making processes, particularly for assortment planning \citep{chen2022assortment, tulabandhula2023multi, jasin2024assortment}. For instance, \cite{jasin2024assortment} demonstrate numerically that incorporating multi-purchase behavior can increase a retailer’s expected revenue by up to 14\%. However, to maintain computational tractability, existing research often relies on structural restrictions on the feasible bundle set, enabling the design of tailored heuristics or approximation algorithms. Specifically, ~\cite{tulabandhula2023multi} propose a computationally efficient heuristic to identify a feasible assortment under the BundleMVL-K model with a cardinality constraint of $K =2$. Similarly, \cite{jasin2024assortment} develop a fully polynomial-time approximation scheme (FPTAS) for the assortment problem under the MVMNL model, provided each bundle contains at most one product per category. These methodological limitations motivate a fundamental research question: Can we develop a rigorous global optimization methodology that accommodates arbitrary, general purchase patterns under Logit-MP models?

Addressing this question requires a computationally tractable representation of the mapping between the firm’s discrete decision space (i.e., binary assortment decisions and discrete price tiers) and the resulting multi-purchase choice probabilities. This task is inherently challenging due to the combinatorial and nonconvex nature of Logit-MP models. In the context of assortment planning, for example, the combinatorial challenge arises from modeling the set of all possible bundles considered by the customer. Simultaneously, the nonconvexity is driven by the fractional structure of the logit probabilities. These challenges are significantly compounded when allowing for general multi-purchase patterns.

We address the question by developing a mixed-integer programming (MIP) formulation approach for the Logit-MP models. Our methodology integrates convexification techniques from multilinear optimization and fractional programming through a three-stage approach.  First, we build a hypergraph-based representation of the Logit-MP model, where nodes denote individual products and hyperedges characterize the collection of bundles considered by the customer. This abstraction is sufficiently general to encompass diverse purchase patterns, including those found in MVMNL and BundleMVL-K. Second, to resolve the combinatorial dependencies within the bundle sets, we leverage results from multilinear optimization to derive valid inequalities tailored to the hypergraph structure. Last, we use a perspective transformation to linearize the fractional terms inherent in the logit probabilities. A key theoretical advantage of this transformation is that it is strength-preserving. In other words, it maps the tight polyhedral characterization achieved in the combinatorial stage directly into the fractional space without loss of relaxation quality.

We highlight the contributions of our paper as follows.
\begin{enumerate}
    \item  We provide a hypergraph-based representation of Logit-MP models, which unifies various multi-purchase patterns in literature, such as MVMNL and BundleMVL-2 in~\cite{chen2022assortment}, \cite{tulabandhula2023multi}, and \cite{jasin2024assortment}, and enables us to leverage structural properties in the hypergraph to build strong formulations. The underlying hypergraph structures can be identified directly from transaction data via an estimation procedure, as detailed in Section~\ref{sec:estimation}. Furthermore, we formally characterize the computational complexity inherent in general multi-purchase patterns. Specifically, Theorem~\ref{them:apx} shows that assortment optimization under general Logit-MP models is NP-hard to approximate. This result formally justifies the necessity of our MIP approach.

    \item Our proposed MIP formulation provides an exact solution for assortment optimization under Logit-MP models with arbitrary purchase patterns, extending naturally to heterogeneous population variants (see Section~\ref{sec:app}). Beyond its flexibility, a primary theoretical contribution of our formulation is the tightness of its linear programming (LP) relaxation. Specifically, we first show that the LP relaxation of our formulation is tighter than that of the prevalent Big-M formulation obtained using the Charnes–Cooper transformation, even when both formulations use the same relaxation oracle for the combinatorial structures. We also characterize conditions on a multi-purchase hypergraph under which our LP relaxation is \textit{locally sharp}, \textit{i.e.}, the tightest possible linear relaxation in the space of choice probabilities. We demonstrate that our LP relaxation exactly solves the unconstrained assortment planning problem for a broad class of structures, including series–parallel graphs and specific hypergraphs with nested overlapping hyperedges. This result significantly generalizes the existing LP characterization for paths established in~\cite{lo2019assortment}, which relies on the sales-based linear program proposed by~\cite{gallego2015general}. 
    \item To solve our formulations efficiently, we leverage structural properties of the multi-purchase hypergraph and the perspective transformation to develop a cutting-plane implementation, ensuring both the tightness of LP relaxation and the computational tractability of the solution process. We computationally demonstrate the value of our methodology across several classes of assortment optimization problems. In all tested instances, our formulations outperform both the Big-M formulations and the formulations based on the state-of-the-art conic integer optimization approach~\citep{sen2018conic}.  In particular, when customers purchase at most two products, our formulations find a global optimal assortment for $1000$ products within ten minutes. Furthermore, for more complex patterns, incorporating \textit{running-intersection inequalities}, proposed by~\cite{del2021running}, into our models yields substantial speedups, significantly increasing the number of instances solved within a time limit. Finally, our methodology exhibits consistent performance in heterogeneous extensions, including the mixture of Logit-MP models and their robust counterparts.

\end{enumerate}

\subsection{Literature Review}\label{sec:literature}

\textbf{Multi-Purchase Choice Models.} Existing studies on multi-purchase behavior modeling generally fall into three main categories. The first stream originates from the empirical need to estimate co-purchase dynamics and cross-product dependencies. Early empirical studies extended multinomial logit/probit frameworks to multiple items along with inter-product terms to capture complementarity. This led to the development of Multivariate Probit (MVP) and Multivariate Logit models~\citep{aurier2014multivariate}. While MVP models offer an approach to capture cross-product correlations, their practical adoption is often bottlenecked by the reliance on computationally expensive Monte Carlo Markov Chain (MCMC) methods~\citep{Manchanda1999ShoppingBasket}. In contrast, due to the estimation tractability, MVMNL is widely used in empirical analyses~\citep[e.g.,][]{kwak2015analysis, richards2018retail} and also operational decisions~\citep[e.g.,][]{jasin2024assortment,pan2025multicategory}. More recently, MVMNL has expanded to direct bundle-level modeling, such as the cardinality-restricted BundleMVL-K model \citep{tulabandhula2023multi}. We refer to these logit-based multi-purchase formulations as Logit-MP models. Their structural flexibility and widespread adoption in empirical literature underscore the importance of developing exact MIP formulations to solve the associated large-scale optimization problems.

The second stream of literature explicitly models customers who maximize a structural utility function over a bundle of products, often subject to a budget or quantity constraint. This line of work includes the multi-discrete choice model (MDC)~\citep{huh2022optimal}, the multiple discrete-continuous extreme value model~\citep{bhat2005multiple}, and threshold utility models~\citep{gallego2019threshold}. A notable example is the Marginal Distribution Model (MDM)~\citep{sun2020unified}, which can identify the potential of unseen product combinations if latent marginal distributions are accurately estimated. Hence, MDM is widely used in bundle design studies~\citep{li2022convex,sun2025partition}.

A third stream departs from bundle-level utility by focusing on ranking-based mechanisms. Under this paradigm, customers are assumed to rank individual items and select the top $K$ products with the highest latent utilities. For instance, \cite{bai2024assortment} assume product utilities follow an MNL distribution and derive a logit-like probability form for the top $K$ items. It is important to distinguish this from the first stream: while the resulting formula also appears logit-like, its derivation is rooted in the ordering of individual items rather than the utility assigned to a fixed bundle. Other extensions generalize this by either applying order statistics to select the top $K$ products under general utility functions~\citep{abdallah2024multi}, or by assuming that customers pick the top $K$ available items from a fixed preference list~\citep{lin2023express}.

\textbf{Optimization under Multi-Purchase Choice Models.} 
A burgeoning body of literature has integrated multi-purchase choice models into operational decision-making, with a primary focus on assortment planning. However, multi-purchase dynamics introduce significant optimization challenges. Under the Logit-MP model, most studies focus on heuristic or approximation algorithms. For instance, \cite{jasin2024assortment} proposed an FPTAS under a special case of feasible bundles defined by the MVMNL model. Similarly, \cite{tulabandhula2023multi} rely on heuristics to obtain a good assortment decision under the BundleMVL-K model. Our work contributes to this stream of literature by deriving exact MIP formulations for assortment optimization under Logit-MP models with arbitrary purchase patterns.  

For the second and third streams of multi-purchase literature, the corresponding optimization problems are more difficult. Under utility maximization with budget constraints, the intractability stems from the complex bi-level structure, i.e., customers' choices and operational decisions. Under ranking-based models, the difficulty lies in formulating the relation between operational decisions and top-K rules. As a result, the literature mainly relies on approximation algorithms such as MDC~\citep{zhang2021assortment}, MDM~\citep{mao2025optimizing}, top-K MNL utilities~\citep{bai2024assortment}, and top-K preference lists~\citep{yang2025assortment}. A notable exception is the MDC model, under which continuous pricing optimization admits highly tractable solutions~\citep{huh2022optimal}.

Our methodology contributes to a growing stream of research that leverages integer programming techniques to solve optimization problems involving customer choice models. Recent work has focused on linearizing the fractional structure of logit-based choices, including convexification results for the MNL model in quick-commerce setting~\citep{chen2025integer}, and strong formulations for the mixture of MNL models by using second-order cones~\citep{sen2018conic} and submodularity~\citep{atamturk2020submodularity,kilincc2025conic}.~\cite{akchen2025exact} propose a perspective formulation for a logit-based share-of-choice product design problem, with subsequent extensions to history-dependent effects \citep{he2025hap} and resource allocation for crime prevention  \citep{he2025proactive}. Under the random utility model,~\cite{bertsimas2019exact} propose an MIP formulation that is integral for single-segment problems, while \cite{ma2023assortment} and \cite{khalid2025assortment} provide tighter inequalities for multiple segments. Finally, modeling tools for piecewise linear functions and disjunctive constraints have also been used to address pricing problems under a representative customer model~\citep{yan2022representative}, continuous attribute design \citep{huchette2023nonconvex}, and decision tree-based assortment optimization \citep{akchen2021assortment}.

\subsection{Structure}
The rest of the paper is organized as follows. In Section~\ref{sec:hypergraph}, we present preliminaries. Section~\ref{sec:model} introduces our MIP formulations and establishes their theoretical properties. In Section~\ref{sec:estimation}, we develop a heuristic to identify sparse hypergraph structures from data. Section~\ref{sec:app} focuses on applications to assortment planning, while Section~\ref{sec:mix} extends the framework to heterogeneous and robust settings. We conclude the paper in Section~\ref{sec:conclude}. All proofs are given in the ecompanion.

\section{Preliminaries}\label{sec:hypergraph} 

\subsection{Logit-Based Multi-Purchase Choice Model}\label{sec:logit:model}

We consider a set of products $V=\{1,\dots,N\}$, where $N$ is a positive integer. In principle, a customer may purchase any bundle in the power set $2^V$. However, empirical evidence suggests that customers typically evaluate only a limited subset of available options due to cognitive constraints and search costs \citep{hauser1990evaluation,roberts1991development,hauser2014consideration}. In multi-purchase settings, observed baskets generally contain only a small number of products~\citep{russell2000analysis,tulabandhula2023multi,jasin2024assortment}. To capture this behavior, existing models impose structural restrictions on feasible bundles. For example, the Multivariate MNL model, proposed in~\cite{jasin2024assortment}, restricts purchases to at most one product per category, whereas the BundleMVL-$K$ model, proposed in~\cite{tulabandhula2023multi}, limits attention to bundles of cardinality at most $K$, where $K$ is selected based on empirical fit. Motivated by these findings, we characterize customers' choices using a general \textit{consideration set} $\E \subseteq 2^V$. While we assume $\E$ contains all singleton items to ensure baseline availability, its structure remains completely flexible. Formulated in this way, $\E$ serves as a unifying framework that subsumes various existing logit-based multi-purchase models. Furthermore, we develop a data-driven heuristic to explicitly construct $\E$ and validate its practical effectiveness using real-world transaction data (see Section~\ref{sec:estimation} for details).

Given an assortment $S \subseteq V$, customers choose among the bundles in their consideration set that are feasible under $S$, namely $2^S \cap \E$. Each bundle $e$ is associated with a deterministic utility parameter $u_e$, and customer purchases follow a logit choice rule. Specifically, given an assortment $S$ and a consideration set $\E$, the probability of purchasing bundle $e$ is
\begin{equation*}\label{eq:choice}
    P_{\E}(e,S) = \frac{\exp(u_e)\ind(e\subseteq S)}{1+ \sum_{c\in 2^S\cap \E}\exp(u_c)},   \tag{\textsc{Logit-MP}}  
\end{equation*}
where we normalize the utility of the no-purchase option to zero. The indicator function $\ind(e\subseteq S)$ ensures that the purchase probability of bundle $e$ is strictly zero unless the entire bundle is included in the offered assortment $S$. Model~\eqref{eq:choice} extends the standard MNL model to a multi-purchase setting. In particular, when $\E = V$, it reduces to the classical single-item MNL model. In Section~\ref{sec:estimation}, we will discuss how to identify the consideration set $\E$ and estimate utilities $\boldsymbol{u}$. We also demonstrate that~\eqref{eq:choice} significantly improves prediction performance compared to the MNL model via a real transaction dataset.  

\subsection{A Hypergraph Representation}
 
This subsection introduces a hypergraph representation of~\eqref{eq:choice}, which connects choice modeling to multilinear optimization, enabling us to derive strong formulations in Section~\ref{sec:model}.

A hypergraph is a generalization of a graph in which an edge can connect a subset of nodes, and such edges are called \textit{hyperedges}. We model multi-purchase behavior using a hypergraph $\G(V,\E)$, where the node set $V$ represents the items and the hyperedge set $\E$ is exactly the bundle consideration set. The maximum bundle size, formally known as the \textit{rank} of the hypergraph, is defined as $d := \max\{|e|:e\in \E\}$. Given a predefined consideration generation rule $f$, e.g., a retailer offering specific suits and matching ties, we denote the corresponding structured consideration set by $\E_{f}$. To quantify the concentration of customer preferences, we define a sparsity measure for the multi-purchase patterns as:
\begin{equation*}
    \theta(\G) = \frac{|\E\setminus V|}{|\E_{f} \setminus V|}.
\end{equation*}

Next, we use hypergraphs to visualize three different bundle consideration sets. 
Figure~\ref{fig:mvmnl} represents the MVMNL model~\citep{jasin2024assortment}. 
where customers choose at most one product from the $k$-th category for $k=1,2, 3$.  
Figure~\ref{fig:bundle} illustrates the BundleMVL-2 model introduced in~\cite{tulabandhula2023multi}, where customers buy at most two products. Figure~\ref{fig:ric} shows a general multi-purchase pattern, including disjoint, nested, and chain-structured choices. For example, choices $e_1$ and $\{5\}$ are disconnected. $e_1,e_2$ and $e_3$ form a chain structure included in $e_0$.

\captionsetup[subfigure]{font=small}  
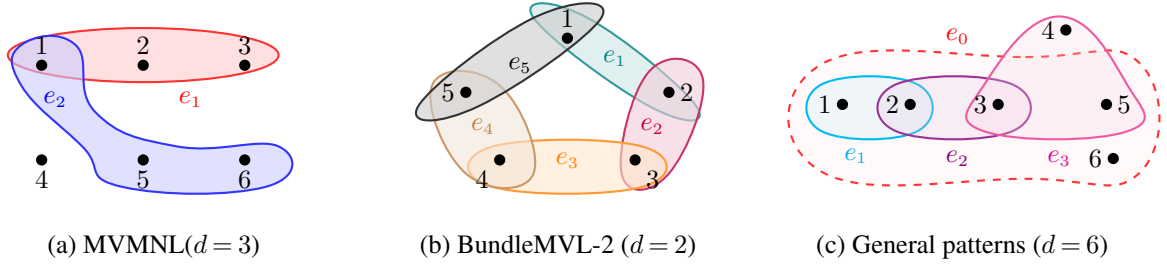
\begin{figure}[htbp]
    \centering
    \caption{Hypergraph representations of multipurchase behavior under different bundle consideration sets. The nodes $i$ represent products, and colored regions represent hyperedges (customer bundles). }
    \label{fig:hypergraph}
 
    \begin{subfigure}[b]{0.3\textwidth}
        \centering
    \begin{tikzpicture}[scale=0.9] 
    \coordinate (n1) at (0, 1.5);
    \coordinate (n2) at (1.5, 1.5);
    \coordinate (n3) at (3.0, 1.5);
    \coordinate (n4) at (0, 0.1);
    \coordinate (n5) at (1.5, 0.1);
    \coordinate (n6) at (3.0, 0.1);

    
    \draw[hyedge=red] (1.5,1.65) ellipse (2 and 0.4);
\node[font=\small, red] at (2.2,1) {$e_1$};

    \draw[hyedge=blue] plot [smooth cycle, tension=0.8] coordinates {
        (-0.3, 1.8)   
        (0.5, 1.8)    
        (0.8, 0.8)    
        (2.0, 0.3)    
        (3.5, 0.3)    
        (3.3, -0.3)   
        (1.2, -0.3)   
        (0.6, 0.3)    
        (-0.3, 1.2)   
    };
\node[font=\small, blue] at (0.2,1) {$e_2$};

    \foreach \i in {1,...,6} \node[node_point] at (n\i) {};

    \node[above, font=\small] at (n1) {$1$};
    \node[above, font=\small] at (n2) {$2$};
    \node[above, font=\small] at (n3) {$3$};
    
    \node[below, font=\small] at (n4) {$4$};
    \node[below, font=\small] at (n5) {$5$};
    \node[below, font=\small] at (n6) {$6$};

\end{tikzpicture}
        \caption{MVMNL($d=3$)}
        \label{fig:mvmnl}
    \end{subfigure}
    \hspace{0.5em}
    \begin{subfigure}[b]{0.3\textwidth}
        \centering
 \begin{tikzpicture}[scale=0.9]

\coordinate (n1) at (0, 1);
\coordinate (n2) at (1.5, 0.2);
\coordinate (n3) at (1, -0.8);
\coordinate (n4) at (-1, -0.8);
\coordinate (n5) at (-1.5, 0.2);

\draw[hyedge=teal]
  plot coordinates {
    (-0.5, 1.3) (0.3, 1.3) (1.9, 0) (1.2, 0)
  };
\node[font=\small, teal] at (0.7,0.7) {$e_1$};

\draw[hyedge=purple]
  plot coordinates {
    (2, 0.4) (1.5, -1) (0.8, -1) (1.2, 0.5)
  };
\node[font=\small, purple] at (1.25,-0.3) {$e_2$};

\draw[hyedge=orange]
  plot coordinates {
     (-1.1, -0.6) (1.1, -0.6) (1.1, -1.2) (-1.1,-1.2)
  };
\node[font=\small, orange] at (0,-0.8) {$e_3$};

\draw[hyedge=brown]
  plot coordinates {
     (-2, 0.3) (-1, 0.3) (-0.5,-1) (-1.5,-1)
  };
\node[font=\small, brown] at (-1.25,-0.3) {$e_4$};

\draw[hyedge=black]
  plot coordinates {
     (-0.5, 1.3) (0.5, 1.3) (-1.3, 0) (-2.2, 0)
  };
\node[font=\small, black] at (-0.7,0.6) {$e_5$};

\foreach \i in {1,...,5}
  \node[node_point] at (n\i) {};

\node[above, font=\small]       at (n1) {$1$};
\node[right, font=\small]       at (n2) {$2$};
\node[below right, font=\small] at (n3) {$3$};
\node[below left, font=\small]  at (n4) {$4$};
\node[left, font=\small]        at (n5) {$5$};

\end{tikzpicture}
\caption{BundleMVL-2 ($d=2$)}\label{fig:bundle}
\end{subfigure}
    \hspace{0.5em}
    \begin{subfigure}[b]{0.3\textwidth}
        \centering
        \begin{tikzpicture}[scale=0.9] 
\coordinate (n1) at (0.3,2);
\coordinate (n2) at (1.3,2);
\coordinate (n3) at (2.6,2);
\coordinate (n4) at (3.6,3.1);
\coordinate (n5) at (4.2,2);
\coordinate (n6) at (4.3,1.2); 

\draw[hyedge=red, dashed, fill opacity=0.12]
plot coordinates {
    (0,2.6) (3,2.7)(4.7,2.6)  
   (4.7,1) (3,0.9)  (0,1)
};

\node[font=\small, red] at (2,3) {$e_0$};

\draw[hyedge=cyan, fill opacity=0.25]
plot coordinates {
    (0,2.3) (1.4,2.3)   (1.4,1.6) (0,1.6)
};
\node[font=\small, cyan] at (0.5,1.24) {$e_1$};

\draw[hyedge=violet, fill opacity=0.25]
plot coordinates {
    (1.1,2.3) (2.8,2.3) (2.8,1.6) (1.1,1.6)
};
\node[font=\small, violet] at (2,1.2) {$e_2$};

\draw[hyedge=magenta, fill opacity=0.25]
plot coordinates {
     (2.4,2.3) (3.5, 3.3)(4.5,2.3) (4.5,1.6) (2.4,1.6)
};
\node[font=\small, magenta] at (3.5,1.2) {$e_3$};

\foreach \i in {1,...,6}
  \node[node_point] at (n\i) {};

\node[left,  font=\small] at (n1) {$1$};
\node[left, font=\small] at (n2) {$2$};
\node[left, font=\small] at (n3) {$3$};
\node[left, font=\small] at (n4) {$4$};
\node[right, font=\small] at (n5) {$5$};
\node[left, font=\small] at (n6) {$6$};

\end{tikzpicture}
 \caption{General patterns ($d=6$)}\label{fig:ric}
    \end{subfigure}
\end{figure}

With a multi-purchase hypergraph $\G$, we associate a set of choice probabilities under the logit-based multi-purchase model as follows. First, we introduce the binary variable $\x \in \{0,1\}^{N}$ to denote an assortment, that is, $x_i = 1$ if and only if product $i$ is available. Then, we use an indicator function $\Pi_{j\in e} x_j$ to present whether a bundle $e\in \E$ can be purchased under the assortment $\x$. Now, we define a \textit{choice probability set} $\C_{\G}$ as follows.
\begin{equation}\label{eq:cg:set}
    \begin{split}
         \C_{\G} := \Biggl\{ (\rho, \y ) \in \bbmr^{1+|\E|} \Biggm| \ 
    \rho &= \frac{1}{1+\sum_{c\in \E} v_c \prod_{i\in c} x_i }, \\
    y_e &= \frac{ \prod_{i\in e} x_i}{1+\sum_{c\in \E} v_c \prod_{i\in c} x_i } \quad \for \x \in \{0,1\}^{N} \Biggr\}.
    \end{split}
\end{equation} 
where $\rho$ is the no-purchase probability and $v_e y_e$ is the choice probability of bundle $e\in \E$. We therefore refer to $(\rho,\y)$ as the \textit{choice probability variable}.  

The choice probability set $\C_{\G}$ constitutes a fundamental building block for optimization models involving multi-purchase behavior. A canonical application is the assortment planning (or bundle design) problem, in which the decision-maker seeks to select a choice probability vector $(\rho, \y)$ that maximizes the expected revenue $\sum_{e \in \E} r_e  v_ey_e$, where $r_e$ is the revenue of selling a bundle $e$. This assortment optimization problem is NP-hard to approximate (see Theorem~\ref{them:apx}).  Furthermore, the problem remains strongly NP-hard even when $\G$ is a complete bipartite graph, which rules out the existence of FPTAS~\citep{chen2022assortment}. 
While existing methods typically rely on restrictive structural assumptions or heuristic approaches~\citep{chen2022assortment,tulabandhula2023multi,jasin2024assortment,vasilyev2025assortment}, we propose a mixed-integer programming framework that enables global optimization over a general hypergraph structure (see Section~\ref{sec:model}). In Section~\ref{sec:app}, we demonstrate that our formulations can compute globally optimal assortments with up to 1000 items. Moreover, our formulations can be readily embedded as modular subcomponents in more complex optimization models in which the choice probability set arises as a core constraint set, as discussed in Section~\ref{sec:mix}.

Before presenting our formulations, we briefly review some concepts from integer programming. Consider a set defined by linear inequalities
and continuous and integer variables as follows
\[
L = \bigl\{(\x,\y,\boldsymbol{\lambda}) \in \bbmr^{n} \times \bbmr^s \times \mathbb{Z}^t \bigm| \BFA \x + \BFB \y  + \BFC \boldsymbol{\lambda} \leq \BFb \bigr\},
\]
where $\BFA \in \bbmr^{m \times n}$, $\BFB \in \bbmr^{m \times s}$, $\BFC \in \bbmr^{m \times t}$ and $\BFb \in \bbmr^m$. We say that $L$ is an  MIP formulation of a set $S \subseteq \bbmr^n$ if the projection of $L$ onto the space of $\x$ variables is $S$, that is,
\[
S = \bigl\{\x \in  \bbmr^n \bigm| \exists \; \y, \boldsymbol{\lambda} \, \st (\x,\y,\boldsymbol{\lambda}) \in L \bigr\}.
\]
 The polyhedron $P$ obtained by dropping all integrality requirements in $L$ is
called linear programming relaxation of $L$. One of the factors that has a strong impact on the performance of an MIP formulation is the strength of the LP relaxation. A formulation $L$ is said to be \textit{locally sharp} if its LP relaxation yields the strongest possible bound, in the sense that the projection of $P$ onto the space of $\x$ variables coincides with the tightest relaxation of $S$, {i.e.} the convex hull of $S$, denoted as $\conv(S)$~\citep{vielma2015mixed}. This property is particularly valuable in our setting, as it implies that the assortment optimization problems admit an exact LP formulation.

\section{MIP Formulations}\label{sec:model}

In this section, we develop MIP formulations for the choice probability set $\C_{\G}$, defined as in~\eqref{eq:cg:set}. The set $\C_{\G}$ involves two sources of nonlinearity. First, given a hypergraph representation $\G$ of multi-purchase behavior, we associate each hyperedge $e \in \E$ with a monomial that indicates whether a bundle 
is available for purchase. Specifically, we define:
\begin{equation}\label{eq:mg}
\M_{\G} := \biggl\{\z \in \bbmr^{|\E|} \biggm| z_e =\prod_{i\in e} x_i,\for e\in \E \text{ and } \x \in \{0,1\}^{|V|} \biggr\}.
\end{equation}
The second source of nonlinearity is the ratio of the attraction value $v_e$ of a bundle and that of all available bundles. To address the two challenges inherent to the logit-based multi-purchase model, we leverage recent advances in multilinear optimization~\citep{del2021running} and fractional programming~\citep{he2024convexification}, respectively.

More specifically, we propose two approaches for deriving MIP formulations. Both approaches use the same technique to linearize the monomial set $\M_{\G}$, but differ in how the ratio terms are handled. Formulations in Section~\ref{sec:strong:model} adopt the projective transformation in~\cite{he2024convexification}, whereas those in Section~\ref{sec:naive:model} are built upon the classic Charnes-Cooper transformation~\citep{charnes1962programming} and the novel conic approach in~\cite{sen2018conic}.   We also provide a theoretical comparison of proposed approaches.

\subsection{Perspective Formulations}\label{sec:strong:model} 
Our approach provides flexibility in modeling the monomial set $\M_{\G}$. We assume access to a polyhedral relaxation oracle for $\M_{\G}$ described by the linear system 
\begin{equation}\label{eq:oracle}
\R :=\bigl\{ \z \in [0,1]^{|\E|}  \bigm| \exists \w \text{ s.t. }   \BFA\z + \BFB \w \leq \BFb\bigr\}, 
\end{equation}
where $\w$ is additional auxiliary variables, $\BFA$ and  $\BFB$  are matrices of proper dimensions and $\BFb$ is a vector of proper dimension. Here, $[0,1]$ is a trivial bound from the fact that $\x$ is binary. The oracle $\R$ is said to be \textit{exact} if, when combined with the binary constraints $z_e \in \{0,1\}$ for all $e \in \E$, it yields an MIP formulation for $\M_{\G}$. It is said to be \textit{integral} if for every vertex $\z$ of $\R$, we have $z_e \in \{0,1\}$ for all $e \in \E$.

Given such a relaxation oracle, we construct the formulation
\begin{subequations}\label{eq:pers:oracle} 
    \begin{alignat}{3}
    &  \y \in \rho\cdot \R  \text{ and } \rho \geq 0 \label{eq:pers:oracle-1}  \\
    & \, \rho + \sum_{e \in \E}v_ey_e = 1 \label{eq:pers:oracle-2} \\
    & x_i \in \{0,1\} && \quad  \for i \in V \label{eq:pers:oracle-3} \\
    & x_i \geq  y_i + \sum_{e \in \E | i\in e } y_e v_e + \sum_{e \in \E | i\notin e} v_e \max\{0, y_i +y_e -\rho\}   && \quad \for i \in V \label{eq:pers:oracle-4}\\
    & x_i \le  y_i + \sum_{e \in \E | i\in e } y_e v_e+ \sum_{e \in \E | i\notin e} v_e \min\{y_i, y_e \}   && \quad \for i \in V. \label{eq:pers:oracle-5} 
\end{alignat}
\end{subequations} 
Constraint~\eqref{eq:pers:oracle-1} homogenizes the oracle via the nonnegative scaling variable $\rho$, namely
\[
\rho \cdot \R := \bigl\{ \y \in [0,\rho]^{|\E|}\bigm| \exists \w \text{ s.t. } \BFA \y + \BFB \w \leq \BFb \rho   \bigr\}.
\] 
Constraint~\eqref{eq:pers:oracle-2} enforces normalization of the purchase and non-purchase probabilities. The binary constraint~\eqref{eq:pers:oracle-3} corresponds to the assortment decision, that is, the $i^{\text{th}}$ item is offered if and only if $x_i=1$ for $i \in V$. Finally, 
constraints~\eqref{eq:pers:oracle-4}–\eqref{eq:pers:oracle-5} link the assortment decision $\x$ with the choice probabilities $(\rho,\y)$ and ensure consistency with our model~\eqref{eq:choice}. 

A key feature of the construction is that it preserves the polyhedral strength of the oracle $\R$, that is, improvements in the relaxation of $\M_{\G}$ translate directly into a stronger MIP formulation of the choice probability set $\C_{\G}$. The preservation of local sharpness is important because it enables polynomial-time solvable linear programming formulations for the assortment optimization problem, as detailed in Section~\ref{sec:app}. The next result formalizes this preservation property.

\begin{theorem}\label{theorem:main}
For any hypergraph $\G$, the perspective formulation~\eqref{eq:pers:oracle} is an exact (resp. locally sharp) MIP formulation for $\C_{\G}$ if the relaxation oracle $\R$ is exact (resp. integral).  
\end{theorem}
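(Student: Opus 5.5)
The plan is to prove both claims by working in the projectively transformed space $\z = \y/\rho$. The constraints~\eqref{eq:pers:oracle-1}–\eqref{eq:pers:oracle-2} then say exactly that $\z \in \R$ and $\rho = 1/(1+\sum_e v_e z_e)$. I will treat the two claims separately.

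\textbf{Exactness.} I first show that every feasible point has $\rho>0$. If $\rho=0$, then $\y\in 0\cdot\R$ forces $\y\in[0,0]^{|\E|}$, which contradicts~\eqref{eq:pers:oracle-2}. So $\z:=\y/\rho\in\R$ is well defined, and $\rho(1+\sum_e v_ez_e)=1$.

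Next I fix a binary $\x$ and read off the singleton coordinates from the linking constraints, using that $\E$ contains every singleton $\{i\}$.
\begin{itemize}
\item If $x_i=0$, every term on the right of~\eqref{eq:pers:oracle-4} is nonnegative. Hence $\rho z_i=0$ and $\rho z_e=0$ for all $e\ni i$.
\item If $x_i=1$, I bound $\min\{y_i,y_e\}\le y_e$ in~\eqref{eq:pers:oracle-5}. This gives $1\le \rho z_i+\sum_{e}v_e y_e=\rho z_i+1-\rho$, so $z_i\ge 1$, and therefore $z_i=1$.
\end{itemize}
Thus $z_i=x_i$ for all $i\in V$, and $z_e=0$ whenever some $i\in e$ is unavailable.

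The step I expect to be the main obstacle is the hyperedges $e\subseteq S$. Constraints~\eqref{eq:pers:oracle-4}–\eqref{eq:pers:oracle-5} do not by themselves force $z_e=1$ for such $e$. Here I will invoke exactness of $\R$: once the singleton coordinates are binary, the oracle's MIP formulation of $\M_\G$ must pin $z_e=\prod_{i\in e}x_i$. Concretely, I would check that the standard choices (McCormick or running-intersection relaxations) contain $z_e\ge \sum_{i\in e}z_i-|e|+1$. If the abstract definition of exactness does not literally give this, I would state it as the operative reading of ``exact''.

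Given $\z\in\M_\G$, the point $(\rho,\y)=(\rho,\rho\z)$ is precisely the point of $\C_\G$ generated by $\x$. Conversely, every point of $\C_\G$ is feasible. To see this, take $\z\in\M_\G$ with its $\w$ scaled by $\rho$. For binary $z$, $\max\{0,z_i+z_e-1\}=\min\{z_i,z_e\}=z_iz_e$, so both~\eqref{eq:pers:oracle-4} and~\eqref{eq:pers:oracle-5} reduce to the same expression
\[
\rho\Bigl(z_i+\sum_{e\ni i}v_ez_e+\sum_{e\not\ni i}v_ez_iz_e\Bigr).
\]
If $z_i=1$, this equals $\rho(1+\sum_e v_ez_e)=1$. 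If $z_i=0$, it equals $0$. In both cases it matches $x_i$.

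\textbf{Local sharpness.} I work with the LP relaxation, in which $\x\in[0,1]^N$. The first task is to show that~\eqref{eq:pers:oracle-4}–\eqref{eq:pers:oracle-5} never cut off a point that satisfies~\eqref{eq:pers:oracle-1}–\eqref{eq:pers:oracle-2}.
\begin{itemize}
\item Since $y_i,y_e\le\rho$, we have $\max\{0,y_i+y_e-\rho\}\le\min\{y_i,y_e\}$, so the lower bound is at most the upper bound.
\item The upper bound is nonnegative.
\item The lower bound is at most $y_i+\sum_{e\ne\{i\}}v_ey_e\le \rho+1-\rho=1$.
\end{itemize}
So some $x_i\in[0,1]$ always exists. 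The projection onto $(\rho,\y)$ is therefore
\[
\Phi(\R):=\bigl\{(1,\z)/(1+v^\top\z):\z\in\R\bigr\}.
\]

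The map $\Phi$ is a projective (linear-fractional) bijection with positive denominator on $\R$. It maps segments to segments, so it commutes with taking convex hulls; this is the convexification result of \cite{he2024convexification}. Integrality of $\R$ gives $\R=\conv(\M_\G)$, and hence $\Phi(\R)=\conv\Phi(\M_\G)=\conv(\C_\G)$, which is local sharpness in $(\rho,\y)$.

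If sharpness is also required in the $\x$-coordinates, I would argue through vertices. Extreme points of $\Phi(\R)$ come from binary $\z$, and at those points~\eqref{eq:pers:oracle-4}–\eqref{eq:pers:oracle-5} force $x_i=z_i$. I would combine this with a convex-combination argument on the lifted set; this is the part I would check most carefully.
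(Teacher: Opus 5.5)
Your proof is correct and follows the paper's route. For exactness, it uses the same case analysis on binary $x_i$ to get $y_i=\rho x_i$ and then appeals to exactness of $\R$ to pin $z_e=\prod_{i\in e}x_i$ (you rightly note this needs exactness with only the singleton coordinates binary, which is how the paper implicitly uses it as well). For sharpness, it uses the same check that \eqref{eq:pers:oracle-4}--\eqref{eq:pers:oracle-5} add nothing once $\x$ is eliminated, with your projective-map argument playing the role of the paper's appeal to Theorem~2 of \cite{he2024convexification}.

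Drop the final paragraph. Local sharpness here concerns only the projection onto $(\rho,\y)$, with $\x$ acting as the auxiliary integer variables, and sharpness in the lifted $(\rho,\y,\x)$-space is actually false: already for $\E=V=\{1,2\}$ the LP relaxation is $4$-dimensional, since both $x$-intervals have positive length at interior points, while the four lifted integer points span at most a $3$-dimensional hull.
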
 
Our formulation is closely related to Remark 3  of \cite{he2024convexification}, which establishes an equivalence between the convex hull of the monomial set $\M_{\G}$ and that of the choice probability set $\C_{\G}$. Rather than pursuing a complete polyhedral description, we focus on developing MIP formulations that are compact, tight, and directly usable for solving operational problems under the logit-based multi-purchase behavior. Motivated by the perspective idea in~\cite{he2024convexification}, we treat the no-purchase variable as a positive scaling variable, resulting in two systems of inequalities: the scaled polyhedral relaxation in constraint~\eqref{eq:pers:oracle-1} and the scaled McCormick relaxation of $x_i \times (1+\sum_{e\in \E}v_e y_e)$ in constraints~\eqref{eq:pers:oracle-4} and~\eqref{eq:pers:oracle-5}. 

To apply Theorem~\ref{theorem:main}, we must construct a polyhedral relaxation oracle for $\M_{\G}$ under an arbitrary multi-purchase hypergraph $\G$. This task is challenging for two reasons. First, the availability of a bundle $e$ depends jointly on the offering decisions of all products contained in $e$. Second, popular products may appear in multiple bundles, creating overlapping monomial terms and nontrivial interactions across bundles. To address these challenges, we introduce three families of valid inequalities that progressively strengthen the formulation of $\M_{\G}$. Recursive McCormick inequalities linearize individual monomial terms. Building on this foundation, odd-cycle inequalities and running-intersection inequalities further tighten the relaxation by exploiting cyclic and chain structures in the hypergraph.

\subsubsection{General Structures: Recursive McCormick Inequality}\label{sec:rmc}
\tikzset{
    base/.style = {draw, rounded corners, minimum width=1.2cm, minimum height=0.6cm, align=center, font=\large\bfseries, line width=1pt},
    arrow/.style = {->, >=stealth, line width=1.2pt},
    rootRed/.style = {base, draw=red, text=red!80!black},
    rootBlue/.style = {base, draw=blue, text=blue!80!black},
    rootGreen/.style = {base, draw=green!60!black, text=green!40!black},
    rootPurple/.style = {base, draw=violet, text=violet},
    leaf/.style = {base, draw=black, text=black}
}

A standard approach for handling multilinear terms is the \textit{recursive McCormick relaxation}. The basic idea is to reduce a multilinear expression into a sequence of bilinear terms and then linearize each bilinear term using McCormick inequalities~\citep{mccormick1976computability}.
For an edge $e=(i,j)$, the binary vector $(z_e,z_i,z_j)$ satisfying the indicator relation $z_e = z_i z_j$ can be exactly represented by:
\[ 
\max \{0, z_i + z_j -1\} \le z_e \le \min\{ z_i, z_j\}.
\] 
For a hyperedge $e$ containing more than two products, we apply this idea recursively. Specifically, we split $e$ into two disjoint subsets $p \subset e$ and $q \subset e$, and then relax the relation $z_e = z_pz_q$. This procedure is repeated until all subsets reduce to singletons. In this way,  a high-order indicator can be represented through a sequence of bilinear indicators, each linearized using McCormick inequalities. The next example illustrates this using the multi-purchase graph in Figure~\ref{fig:mvmnl}. 

\begin{example}
In Figure~\ref{fig:mvmnl}, the customer considers two bundles consisting of three products, namely $\{1,2,3\}$ and $\{1,5,6\}$. The recursive decomposition can be visualized using a binary tree with two root nodes, as shown in Figure~\ref{fig:rmc:mvmnl}. The bundle set $\{1,2,3\}$ is first decomposed into bundles $\{1\}$ and $\{2,3\}$, and then $\{2,3\}$ is further split into $\{2\}$ and $\{3\}$. Similarly, the bundle $\{1,5,6\}$ is first decomposed into $\{1\}$ and $\{5,6\}$, and $\{5,6\}$ is subsequently split into $\{5\}$ and $\{6\}$. Based on the decomposition structure illustrated in Figure~\ref{fig:rmc:mvmnl}, the recursive McCormick formulation for the multi-purchase hypergraph in Figure~\ref{fig:mvmnl} is given as follows:
    \begin{align*}
       \max \{0, z_{23}+ z_1 -1\} &\le z_{123} \le \min\{ z_{23}, z_{1} \} \quad &\max \{0, z_{56}+ z_1 -1\} \le z_{156} \le \min\{ z_{56}, z_{1} \}\\
       \max \{0, z_{2}+ z_3 -1\} &\le z_{23} \le \min\{ z_{2}, z_{3} \} \quad 
       &\max \{0, z_{5}+ z_6 -1\} \le z_{56} \le \min\{ z_{5}, z_{6} \},
\end{align*} 
where $z_{23}$ and $z_{56}$ are auxiliary variables introduced by the recursive decomposition.     \hfill \Halmos
\end{example}
\begin{figure}[htbp]
    \centering  \caption{Binary-tree representation of the recursive McCormick decomposition for the multi-purchase hypergraph in Figure~\ref{fig:mvmnl}}\label{fig:rmc:mvmnl}
        \begin{tikzpicture}[scale=0.7, transform shape, node distance=1.2cm and 0.5cm]
            \node[rootRed] (R1) at (0,0) {\{1,2,3\}};
            \node[rootBlue] (R2) at (4,0) {\{1,5,6\}};

            \node[rootGreen] (sub23) at (0, -2) {\{2,3\}};
            \node[leaf] (leaf1) at (2, -2) {\{1\}}; 
            \node[rootPurple] (sub56) at (4, -2) {\{5,6\}};

            \node[leaf] (l2) at (-0.8, -4) {\{2\}};
            \node[leaf] (l3) at (0.8, -4) {\{3\}};
            
            \node[leaf] (l5) at (3.2, -4) {\{5\}};
            \node[leaf] (l6) at (4.8, -4) {\{6\}};

            \draw[arrow] (R1) -- (sub23);
            \draw[arrow] (R1) -- (leaf1);
            \draw[arrow] (R2) -- (leaf1);
            \draw[arrow] (R2) -- (sub56);
            
            \draw[arrow] (sub23) -- (l2);
            \draw[arrow] (sub23) -- (l3);
            \draw[arrow] (sub56) -- (l5);
            \draw[arrow] (sub56) -- (l6);
        \end{tikzpicture}
\end{figure}
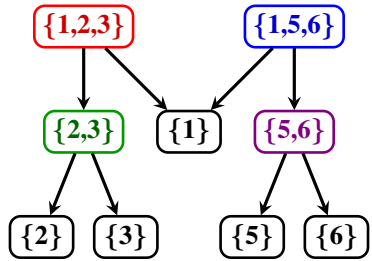

We now formally define the procedure. For each multi-purchase hypergraph $\G$, we can construct a binary tree, possibly with multiple roots, to decompose hyperedges $\E$, denoted as $T(\G)$. For each non-leaf node $t$ of $T(\G)$, its left (resp. right) child is denoted as $t_\ell$ (resp. $t_r$), where children $t_{\ell}$ and $t_r$ form a partition of their parent $t$. Given such a binary tree $T(\G)$, we use McCormick inequalities to linearize each non-leaf node, and obtain 
\begin{equation*}\label{eq:RMC} \Bigl\{ \z \Bigm| \max \{0,z_{t_\ell} + z_{t_r} -1\} \le z_{t} \le \min \{ z_{t_\ell} , z_{t_r}\} \text{ for each non-leaf node } t \in T(\G)\Bigr\}. \tag{\textsc{RMC}}
\end{equation*}
Note that this formulation introduces auxiliary variables for non-leaf nodes that do not appear in the hyperedge set $\E$.  The next proposition shows that, when the relaxation oracle is defined by \eqref{eq:RMC}, this resulting perspective formulation is exact.
\begin{proposition}\label{prop:our:exact}
    If the oracle $\R$ is given by \eqref{eq:RMC} then the perspective formulation~\eqref{eq:pers:oracle} is an MIP formulation of $\C_{\G}$.
\end{proposition}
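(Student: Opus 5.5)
The plan is to invoke Theorem~\ref{theorem:main}. By that theorem it suffices to show that the oracle $\R$ defined by \eqref{eq:RMC}, together with the trivial bounds $\z\in[0,1]$, is \emph{exact}. Exactness means the following: once the binary constraints $z_e\in\{0,1\}$ for $e\in\E$ are added, the projection onto the $\z$-coordinates indexed by $\E$ equals $\M_{\G}$. The auxiliary variables $z_t$ for non-leaf nodes $t\notin\E$ play the role of $\w$ in \eqref{eq:oracle}.

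The first step is to check that $\R$ contains $\M_{\G}$. Given $\x\in\{0,1\}^N$, set $z_t=\prod_{i\in t}x_i$ for every node $t$ of $T(\G)$. Since $t_\ell$ and $t_r$ partition $t$, we have $z_t=z_{t_\ell}z_{t_r}$ with binary factors, and the McCormick inequalities hold with equality on one side. Hence this point is feasible.

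The second step is the converse. Take a point of $\R$ whose $\E$-coordinates are binary, including the singleton coordinates $z_i$, since $\E$ contains all singletons. Define $x_i:=z_i$. Argue by induction on the height of the nodes of $T(\G)$, from the leaves upward, that $z_t=\prod_{i\in t}x_i$ for every node $t$. The leaves are singletons, so the base case holds. At a non-leaf node $t$, the induction hypothesis gives binary $z_{t_\ell},z_{t_r}$, and then $\max\{0,z_{t_\ell}+z_{t_r}-1\}\le z_t\le\min\{z_{t_\ell},z_{t_r}\}$ forces $z_t=z_{t_\ell}z_{t_r}$.

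The main subtlety is the auxiliary nodes: they are not required to be binary, so we cannot simply assume they are. The bottom-up induction resolves this, because binarity propagates from the children to the parent even when the parent is not in $\E$. One should also confirm that every leaf of $T(\G)$ is a singleton (the recursion terminates at singletons) and that each hyperedge $e\in\E$ appears as a node of $T(\G)$. Then $z_e=\prod_{i\in e}x_i$ for all $e\in\E$, so the projected point lies in $\M_{\G}$.

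This establishes exactness of \eqref{eq:RMC}. Theorem~\ref{theorem:main} then yields that \eqref{eq:pers:oracle} is an MIP formulation of $\C_{\G}$.
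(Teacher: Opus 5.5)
Your proposal is correct and follows the paper's proof: show that \eqref{eq:RMC} is an exact oracle for $\M_{\G}$, then invoke Theorem~\ref{theorem:main}. Your bottom-up induction over $T(\G)$ spells out what the paper asserts in one line. Note that the induction uses only binarity of the singleton coordinates $z_i = x_i$, which is precisely what the perspective formulation enforces through $\x\in\{0,1\}^{|V|}$.
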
 

For any fixed recursive decomposition, formulation \eqref{eq:RMC} is straightforward to construct and implement. Consequently, we always include \eqref{eq:RMC} in the oracle to ensure exactness. In practice, however, multi-purchase hypergraphs are often more complex than the examples shown in Figure~\ref{fig:hypergraph} and may admit multiple valid decomposition sequences. The quality of the resulting relaxation can depend on the chosen sequence. In our implementation, we design a heuristic procedure to construct the binary tree and determine the recursive decomposition; see Section~\ref{sec:base} for details.

\subsubsection{Cycles: Odd-Cycle Inequality}\label{sec:odd}We consider a multi-purchase graph in which each customer purchases at most two products. This setting corresponds to the BundleMVL-2 model illustrated in Figure~\ref{fig:bundle}. When a product appears in multiple bundles, the resulting graph may contain cycles. We leverage this cyclic structure to tighten the formulation presented in Proposition~\ref{prop:our:exact}. To motivate our approach, we use the multi-purchase graph in Figure~\ref{fig:bundle} as an illustration. 
\begin{example}\label{eq:odd-cycle}
In Figure~\ref{fig:bundle}, the edge set $C = \{e_1, e_2,e_3,e_4,e_5\}$ forms a cycle. Then, we select a subset $D$ of the cycle that contains an odd number of edges, for example, $D=\{e_1,e_2,e_3\}$.  Recall that for each edge $e = (i,j)$ we introduce a variable $z_{e} = z_iz_j$ to indicate whether bundle $(i,j)$ is available, and in Section~\ref{sec:rmc}, these bilinear relations are linearized using the McCormick inequalities. In this example,  we consider the following subset of McCormick inequalities:
\begin{align*}
   & - z_{(i,j)}   \le -z_{i}  -z_{j} + 1 \text{ and } -z_{(i,j)} \le  0 && \for \, (i,j) \in D\\
   & z_{(i,j)} \le z_i \text{ and }z_{(i,j)} \le z_j && \for \, (i,j) \in C\setminus D.
\end{align*} 
Thus,  for edges in the selected subset $D$, the variable $z_e$ is underestimated, whereas for edges in $C\setminus D$,  it is overestimated. Aggregating these inequalities and dividing both sides by $2$ yields 
\[
    z_{e_5} + z_{e_4} - z_{e_1} -z_{e_2} -z_{e_3} \le z_{5} - z_{2} - z_{3} + \frac{\vert D \vert}{2}. 
\]
Since $\vert D \vert$ is odd and $\z$ is binary, the inequality obtained by rounding the constant term $\frac{\vert D \vert}{2}$ down to the nearest integer preserves the validity of the inequality. \hfill \Halmos
\end{example}

The inequality derived in Example~\ref{eq:odd-cycle} is referred to as an \textit{odd-cycle inequality} in~\cite{padberg1989boolean}.  We now formalize this class of inequalities as follows. Let $C\subseteq \E$ be a cycle and a subset $D\subseteq C$ that has an odd number of bundles. Let $S_{\texttt{in}} = \{u\in V \mid \exists e, f \in D, e\neq f, \st e\cap f = u \}$ be the set of products connecting two distinct bundles in the subset $D$. Similarly, let $S_\texttt{out} = \{u\in V \mid \exists e, f \in C\setminus D, e\neq f, \st e\cap f = u \}$ be the set of products connecting two distinct bundles in $C \setminus D$. The corresponding odd-cycle inequality is given as follows. 
\begin{equation*}\label{eq:odd}
   \sum_{(i,j)\in C\setminus D} z_{ij} - \sum_{(i,j)\in D}z_{ij} \le  \sum_{i\in S_\texttt{out}} z_i  - \sum_{i\in S_\texttt{in}} z_i + \frac{|D|-1}{2} \tag{\textsc{odd-cycle}}.
\end{equation*}

We incorporate inequality~\eqref{eq:odd} to strengthen the relaxation oracle $\R$. This may provide additional computational benefits in solving assortment planning problems. More importantly, due to Theorem~\ref{theorem:main}, a locally-sharp characterization of~\eqref{eq:odd} is preserved under the perspective formulation~\eqref{eq:pers:oracle}. In particular, when the multi-purchase graph $\G$ is a series–parallel graph, inequalities~\eqref{eq:odd} together with the McCormick constraints~\eqref{eq:RMC} provide an exact description of the convex hull of the monomial set $\M_{\G}$. 

A graph is series-parallel if it arises from a forest by repeatedly replacing edges by parallel edges or by edges in series. For example, the cycle in Figure~\ref{fig:bundle} is a series-parallel graph. This restricted graph structure limits the combinatorial complexity of the bundle consideration set, thereby enabling a locally sharp formulation of $\C_{\G}$. 
\begin{proposition}\label{prop:odd}
If the oracle $\R$ is given by \eqref{eq:RMC} and \eqref{eq:odd}, then the perspective formulation~\eqref{eq:pers:oracle} is an MIP formulation of $\C_{\G}$ for any multi-purchase hypergraph $\G$. If $\G$ is a series-parallel graph, then~\eqref{eq:pers:oracle} is also a locally sharp formulation of $\C_{\G}$.
\end{proposition}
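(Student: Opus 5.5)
The plan is to reduce both claims of Proposition~\ref{prop:odd} to Theorem~\ref{theorem:main}. This means checking two things about the oracle $\R$ defined by \eqref{eq:RMC} together with \eqref{eq:odd}: that it is \emph{exact} for every hypergraph $\G$, and that it is \emph{integral} when $\G$ is a series–parallel graph. Once both are verified, Theorem~\ref{theorem:main} immediately gives that \eqref{eq:pers:oracle} is an MIP formulation of $\C_{\G}$ in general, and a locally sharp one in the series–parallel case.

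For exactness, I would first recall that Proposition~\ref{prop:our:exact} already yields exactness of \eqref{eq:RMC} alone. In particular, for binary $\z$ the McCormick system forces $z_t = z_{t_\ell} z_{t_r}$ at every non-leaf node, and hence $z_e=\prod_{i\in e} z_i$. Adding valid inequalities cannot enlarge the set, so it suffices to show that every point of $\M_{\G}$ satisfies \eqref{eq:odd}. I would argue this as in Example~\ref{eq:odd-cycle}. Summing the chosen McCormick inequalities (underestimators on $D$, overestimators on $C\setminus D$) and halving gives a valid inequality with right-hand side $|D|/2$. Because the left-hand side and the node terms are integers at binary points and $|D|$ is odd, the constant can be rounded down to $(|D|-1)/2$; this is a Chvátal–Gomory step. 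The only bookkeeping is that each node of the cycle lies in exactly two consecutive edges. A node in $S_{\texttt{in}}$ then gets coefficient $-1$ after halving, a node in $S_{\texttt{out}}$ gets $+1$, and a node shared between an edge of $D$ and one of $C\setminus D$ cancels (the $-z_i$ from $D$ against the $+z_i$ from $C\setminus D$). The intersection of the monomial set with \eqref{eq:RMC} and \eqref{eq:odd} is therefore unchanged, and exactness holds.

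For integrality in the series–parallel case, note that $\G$ is a graph, so $d\le 2$ and \eqref{eq:RMC} reduces to the standard McCormick inequalities on edges with no auxiliary variables. The polytope $\R$ is then the McCormick relaxation plus the odd-cycle inequalities, which is the standard linearization of the boolean quadric polytope $\mathrm{BQP}(\G)$ with the cycle inequalities of Padberg. I would invoke the known result of \cite{padberg1989boolean}, also obtainable via the cut-polytope correspondence of Barahona and Mahjoub. That result says these inequalities describe $\conv(\M_{\G})$ exactly when $\G$ has no $K_4$-minor, i.e., is series–parallel. Every vertex of $\R$ is therefore a vertex of $\conv(\M_{\G})$ and hence binary, which is precisely integrality. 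One must also confirm that the box bounds $[0,1]$ in \eqref{eq:oracle} are implied by, or harmless alongside, McCormick; they are implied. Theorem~\ref{theorem:main} then gives local sharpness.

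The main obstacle is this last step: matching the paper's indexing of \eqref{eq:odd} (via $S_{\texttt{in}}$ and $S_{\texttt{out}}$) to Padberg's cycle inequalities so that the cited convex-hull theorem applies verbatim. If a direct citation does not cleanly cover the multigraph or parallel-edge cases, I would fall back on the covariance map $\z\mapsto$ cut-polytope coordinates on the suspension graph $\G+\{0\}$. That graph is $K_5$-minor-free whenever $\G$ is $K_4$-minor-free, and Barahona–Mahjoub's theorem that the cycle inequalities define the cut polytope of $K_5$-minor-free graphs then transfers back.
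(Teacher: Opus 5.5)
Your proposal is correct and follows essentially the same route as the paper. Both arguments get exactness from Theorem~\ref{theorem:main} using validity of \eqref{eq:odd} plus exactness of \eqref{eq:RMC}, and get local sharpness from Theorem~10 of \cite{padberg1989boolean}, with the trivial inequalities supplied by \eqref{eq:RMC}, for series--parallel graphs. The only differences are cosmetic: you derive validity of the odd-cycle inequalities yourself via Chv\'atal--Gomory rounding instead of citing \cite{padberg1989boolean}, and you note directly that \eqref{eq:RMC} is plain McCormick when $d\le 2$ rather than invoking Lemma~\ref{lemma:standard}.
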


The implementation of \eqref{eq:odd} is well-established in literature. \cite{barahona1986cut} demonstrate that the separation problem can be solved in polynomial time. For a comprehensive review of implementation strategies and earlier developments, we refer readers to \cite{rehfeldt2023faster} and the references therein.  In our computational study, we propose a projection-based separation oracle in~\ref{sec:sepa:odd} based on methods in \cite{barahona1986cut}.

\subsubsection{Chains: Running Intersection Inequality}\label{sec:ric}
Here, we exploit chain-like dependencies in a multi-purchase hypergraph to derive a stronger formulation than  Proposition~\ref{prop:our:exact}. Formally, we say that a collection of hyperedges $\tilde{E}$ has the running intersection property if there is an ordering $e_1 \cdots e_m $ of $\tilde{E}$ such that for $2 \leq i \leq m$ there exists $j < i$ such that $e_i \cap (e_1 \cup \cdots \cup e_{i-1}) \subseteq  e_j$. That is, the intersection of each $e_i$ with the union of the previous $e_k$'s is
contained in one of these edges. For ease of exposition, let $N(e_i)$ be the intersection of $e_i$ with previous edges, i.e.,
\begin{equation}\label{eq:def:n}
    N(e_i) = e_i \cap (e_1 \cup \cdots \cup e_{i-1}) \quad i =2,\dots, m 
\end{equation} 
and $N(e_1) = \emptyset$ for completeness. $N(e_i)$ also represents popular products appearing in multiple bundles.  We use the multi-purchase graph in Figure~\ref{fig:ric} to illustrate how the running intersection property is leveraged to tighten recursive McCormick inequalities in  Proposition~\ref{prop:our:exact}.

\begin{example}\label{eg:derive:ric}
For the hypergraph illustrated in Figure~\ref{fig:ric}, we use the hyperedge $e_0$ as a base bundle given its maximal cardinality. It has three neighbors $e_1,e_2$, and $e_3$, each intersecting with the root $e_0$. We define an induced hyperedge set by the base $e_0$,  $\tilde{\E} = \{e_k\cap e_0 \mid k = 1,2,3  \}$. Note that $\tilde{\E}$ has a running intersection ordering, that is, $e_1 \cap e_0 $, $e_2 \cap e_0 $ and $e_3 \cap e_0 $. To construct the strengthened formulation, for each $k \in \{1,2,3\}$, we define the relaxation of the multilinear variable $z_{e_k}$ based on its neighborhood structure. If $N(e_k \cap e_0)  \neq \emptyset$  then we relax $z_{e_k}$ by $z_{e_k} \leq z_{\nu}$ for some $\nu \in N(e_k \cap e_0)$, and otherwise, we use the trivial bound $z_{e_k} \leq 1$. In our example, this yields the valid inequalities:
\begin{equation}\label{eq:ri-1}
    z_{e_1} \leq 1 \quad z_{e_2} \leq z_2 \quad \text{ and } z_{e_3} \leq z_3. 
\end{equation}
Furthermore, for node $6 \in e_0 \setminus \cup_{k=1}^3 e_k$, we have the valid bound 
\begin{equation}\label{eq:ri-2}
z_6 \leq 1. 
\end{equation}
Summing these inequalities with the lower bound $0 \leq z_{e_0}$, a naive aggregation results in the valid inequality $z_{e_1} + z_{e_2} + z_{e_3} + z_{6} \le 2 + z_{e_0} + z_{2} + z_{3}$. Now, observe that the running intersection property in $\tilde{\E}$ allows us to show that if $z_0 =0$ then one of the inequalities in~\eqref{eq:ri-1} and~\eqref{eq:ri-2} is not binding. Therefore, we can tighten the aggregated inequality by subtracting one from the right-hand side, resulting in $z_{e_1}+ z_{e_2}+z_{e_3}+z_{6}  \le  1 + z_{e_0} +  z_{2} + z_{3}$.\hfill \Halmos

\end{example}

The inequality derived in Example~\ref{eg:derive:ric} is referred to as a \textit{running-intersection inequality} proposed by~\cite{del2021running}. We now formalize this class of inequalities as follows. Let $e_0$ be a base bundle, and let $e_k$, $k \in K$, be a collection of neighboring bundles such that $\tilde{\E}:=\{e_0\cap e_k \mid k \in K\}$ has the running intersection property. Consider a running intersection ordering of $\tilde{E}$, for $k \in K$ with $N(e\cap e_k) \neq \emptyset$, let $\mu_k \in N(e\cap e_k)$. A running intersection inequality is defined as:
\begin{equation*}\label{eq:running}
\sum_{k\in K}z_{e_k} + \sum_{v \in e_0 \setminus \bigcup_{k\in K}e_k } z_v  \le  z_{e_0} +  \sum_{k\in K \mid N(e_0 \cap e_k) \neq \emptyset } z_{\mu_k} + \omega-1, \tag{\textsc{RIC}}
\end{equation*} 
where $\omega = \bigl\vert e_0 \setminus \bigcup_{k\in K}e_k\bigr\vert  + \bigl\vert \{k \in K \mid N(e_0 \cap e_k) = \emptyset\} \bigr\vert $, which counts the number of trivial constant bounds used in the derivation.

Including~\eqref{eq:running} in $\R$ can tighten the polyhedral relaxation oracle for $\M_{\G}$. Moreover, when the multi-purchase hypergraph has no $\beta$-cycles and neither kites,~\eqref{eq:running} together with~\eqref{eq:RMC} represents the convex hull of $\M_{\G}$~\citep{del2021running}. Specifically, the $\beta$-cycle is a cycle $\mu_1, e_1, \mu_2, e_2, \dots$, $\mu_m, e_m, \mu_{1}$, where $m\ge 3$ and each node $\mu_k$ appears only in the adjacent hyperedges $e_k$ and $e_{k-1}$ ($\mu_1$ is only in $e_1$ and $e_m$). The kite structure consists of three hyperedges $e_0,e_1,e_2 \in \E$ such that $|e_0 \cap e_1 \cap e_2| \ge 2$, $(e_0\cap e_1) \setminus e_2 \neq \emptyset $, and $(e_0\cap e_2) \setminus e_1 \neq \emptyset$. If $\G$ does not have $\beta$-cycles and kite structures, the complexity of the multi-purchase hypergraph is limited, allowing a locally sharp oracle $\R$. For example, Figure~\ref{fig:ric} is kite-free and also $\beta$-acyclic, for which we have a locally sharp formulation. The corresponding result is formalized in the following proposition.
\begin{proposition}\label{prop:exact:running}
    If the oracle $\R$ is given by \eqref{eq:RMC} and \eqref{eq:running}, then the perspective formulation~\eqref{eq:pers:oracle} is an MIP formulation of $\C_{\G}$ for any multi-purchase hypergraph $\G$. If $\G$ is a kite-free $\beta$-acyclic hypergraph, then~\eqref{eq:pers:oracle} is also a locally sharp formulation of $\C_{\G}$.
\end{proposition}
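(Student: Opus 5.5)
The plan is to obtain Proposition~\ref{prop:exact:running} from Theorem~\ref{theorem:main}. That theorem reduces both claims to properties of the oracle $\R$ defined by \eqref{eq:RMC} together with \eqref{eq:running}. Exactness of $\R$ will give the first claim, and integrality of $\R$ in the kite-free $\beta$-acyclic case will give the second. I will work with the extended variable vector $\z$. This vector contains the hyperedge variables $z_e$ for $e\in\E$, the singleton variables $z_i$ (equal to $x_i$, since $\E$ contains all singletons), and the auxiliary variables of the non-leaf nodes of $T(\G)$. These auxiliary variables play the role of $\w$ in \eqref{eq:oracle}.

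\textbf{Exactness.} I will show two inclusions for the set $\R\cap\{0,1\}^{|\E|}$.

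First, it projects into $\M_{\G}$. Take any binary point satisfying \eqref{eq:RMC}. The McCormick inequalities $\max\{0,z_{t_\ell}+z_{t_r}-1\}\le z_t\le\min\{z_{t_\ell},z_{t_r}\}$ force $z_t=z_{t_\ell}z_{t_r}$ whenever $z_{t_\ell},z_{t_r}\in\{0,1\}$. Arguing by induction from the leaves upward, every auxiliary variable is binary and $z_e=\prod_{i\in e}x_i$ for each $e\in\E$. This is exactly the argument behind Proposition~\ref{prop:our:exact}, which I take as given.

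Second, adding \eqref{eq:running} removes no point of $\M_{\G}$. This amounts to the validity of \eqref{eq:running} on binary monomial points, which I will verify directly.
\begin{itemize}
\item If $z_{e_0}=1$, then every variable on the left-hand side equals $1$, because $z_{e_k}\le 1$ and $z_v\le 1$. Each $z_{\mu_k}$ with $\mu_k\in e_0$ equals $1$. The inequality then reduces to a count: the left-hand side has $|K|+|e_0\setminus\cup_k e_k|$ terms, and the right-hand side equals $1+|\{k:N(e_0\cap e_k)\neq\emptyset\}|+\omega-1$. These two quantities agree.
\item If $z_{e_0}=0$, some $i\in e_0$ has $x_i=0$. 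The aggregated bounds $z_{e_k}\le z_{\mu_k}$ (or $z_{e_k}\le 1$) and $z_v\le 1$ sum to the inequality with right-hand side larger by one. It therefore suffices to show that at least one of these bounds is slack. If $i\notin\cup_k e_k$, the bound $z_i\le 1$ is slack. Otherwise, let $e_k$ be the first edge in the running-intersection ordering with $i\in e_k$. Then $z_{e_k}=0$, and $i\notin N(e_0\cap e_k)$. I will choose $k$ so that the corresponding right-hand term equals $1$. If this fails for that $k$, I will move to the edge containing $N(e_0\cap e_k)$, which exists by the running intersection property, and repeat the argument there. This descent is the slackness argument of \cite{del2021running}, so I may simply cite validity from that paper.
\end{itemize}
Together the two inclusions show that $\R$ is exact. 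Theorem~\ref{theorem:main} then gives the first claim.

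\textbf{Local sharpness.} For kite-free $\beta$-acyclic $\G$, I will invoke the result of \cite{del2021running} quoted before the proposition: \eqref{eq:running} together with \eqref{eq:RMC} describes $\conv(\M_{\G})$. From this I need integrality of $\R$ as the paper defines it, namely that every vertex has binary $z_e$.

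This step is the main obstacle, because the convex-hull statement in \cite{del2021running} is for the multilinear polytope in the space of $(x,z_e)$. Our $\R$, by contrast, carries extra auxiliary tree variables from \eqref{eq:RMC}. My approach is to show that the projection of $\R$ onto the $(x,z_e)$ coordinates equals the multilinear polytope. In that case every vertex of the projected set is binary.
\begin{itemize}
\item I will pick the decomposition tree $T(\G)$ so that each auxiliary node corresponds to an intersection set or a subset arising in a running-intersection ordering. The auxiliary variables can then be eliminated by Fourier–Motzkin elimination, and the resulting inequalities are McCormick or \eqref{eq:running} inequalities, which are already present.
\item Alternatively, I will argue that the ``integral'' hypothesis of Theorem~\ref{theorem:main} only needs vertices of the projection onto the $\z$-space of $\E$. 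In that case the convex-hull description suffices directly.
\end{itemize}
With integrality established, Theorem~\ref{theorem:main} yields the second claim.
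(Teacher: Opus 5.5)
Your reduction to Theorem~\ref{theorem:main} and your exactness argument match the paper, which likewise just cites Proposition~10 of \cite{del2021running} for the validity of \eqref{eq:running}. There is one small slip in your direct check. When $z_{e_0}=1$, the terms $z_{e_k}$ are at most $1$, not equal to $1$, because $e_k\not\subseteq e_0$ in general; the count still yields the inequality. For $z_{e_0}=0$, the clean choice is the first $k$ in the ordering whose $e_0\cap e_k$ contains a node with $x=0$; if no such $k$ exists, that node lies in $e_0\setminus\bigcup_k e_k$. For this $k$ you have $z_{e_k}=0$, while all of $N(e_0\cap e_k)$ lies in earlier intersections and equals $1$, so that bound is slack.

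The gap is in the local-sharpness step. The auxiliary tree variables are not the real obstacle. By \eqref{eq:oracle}, $\R$ is already the projection onto the $\z$-space, with the tree variables as the existentially quantified $\w$, so your second option is just the definition. The real issue is that the convex-hull theorem you need (Theorem~3 of \cite{del2021running}) uses \eqref{eq:running} together with the \emph{standard linearization}, not \eqref{eq:RMC}. The standard linearization consists of $0\le z_e\le z_i$ for $i\in e$ and $z_e\ge\sum_{i\in e}z_i-|e|+1$. So ``the convex-hull description suffices directly'' is not justified. Your first option is unproven: you would pick a special tree and claim Fourier--Motzkin produces only McCormick or \eqref{eq:running} inequalities. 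It would also only cover particular trees, whereas the proposition holds for an arbitrary $T(\G)$.

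The missing step is Lemma~\ref{lemma:standard}: every point of \eqref{eq:RMC} satisfies the standard linearization. The upper bounds follow by chaining $z_e\le z_t\le\cdots\le z_i$ down the tree. The lower bound follows by the induction $z_t\ge z_{t_\ell}+z_{t_r}-1\ge\sum_{i\in t}z_i-|t|+1$. Since \eqref{eq:running} involves only hyperedge and node variables, $\R$ is contained in the standard linearization plus \eqref{eq:running}, which equals $\conv(\M_{\G})$ for kite-free $\beta$-acyclic $\G$. Conversely, $\conv(\M_{\G})\subseteq\R$: every point of $\M_{\G}$ lifts to \eqref{eq:RMC} by setting each tree variable to the corresponding product, and it satisfies the valid inequalities \eqref{eq:running}. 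Hence $\R=\conv(\M_{\G})$ is integral for any decomposition tree, with no explicit elimination needed, and Theorem~\ref{theorem:main} gives local sharpness.
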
 

A computationally efficient, polynomial-time separation algorithm for the running intersection inequalities was originally developed by \cite{del2020impact} to solve a general class of mixed-integer nonlinear programs. We have integrated this separation oracle into our cutting-plane procedure to dynamically strengthen our formulations; the implementation details of this separation routine are provided in Appendix~\ref{app:sep-RI}.

\subsection{Alternative Formulations}\label{sec:naive:model} 
In this section, we present an alternative class of formulations that do not rely on perspective reformulations. Using the Charnes--Cooper transformation \citep{charnes1962programming}, the choice probability set $\C_{\G}$ can be equivalently expressed as:
\[ 
\biggl\{(\rho, \y) \biggm|  \rho +\sum_{e\in \E}v_e y_e =1 ,\ \rho \ge 0,\  \z \in \M_{\G},\ y_e =   \rho z_e \, \for e \in \E \biggr\}.
\] 
This representation introduces bilinear terms of the form $y_e = \rho z_e$. Replacing $\M_{\G}$ by a relaxation oracle $\R$ and linearizing the bilinear products using McCormick inequalities yields 
\begin{subequations}\label{eq:bigm} 
    \begin{alignat}{3}
     &\rho +\sum_{e\in \E}v_e y_e =1 \quad \rho \ge 0  \quad \z \in \R  \quad \text{and }  \x \in \{0,1\}^{\vert V\vert} \label{eq:bigm:0}\\
    &\max\{  \rho_L z_e,\ \rho_U z_e + \rho  - \rho_U \} \leq  y_e  \le  \min\{  \rho_L z_e + \rho - \rho_L,\   \rho_U z_e \}  \quad  && \for e \in \E \label{eq:bigm-1} 
    \end{alignat}
\end{subequations}  
where recall that $z_e = x_e$ for all singleton hyperedges $e \in V$. The inequalities in \eqref{eq:bigm-1} are obtained by using McCormick inequalities with a lower bound $\rho_L$ and upper bound $\rho_U$ on the no-purchase probability $\rho$. The strength of formulation~\eqref{eq:bigm} depends on these bounds. We therefore refer to \eqref{eq:bigm} as the \emph{Big-M} formulation. This formulation preserves the exactness of the oracle $\R$.
 
\begin{proposition}\label{prop:bigm:exact}
  If the relaxation oracle $\R$ is exact, then the Big-M formulation~\eqref{eq:bigm} is an MIP formulation of $\C_{\G}$. 
\end{proposition}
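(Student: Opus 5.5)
We need to show that the projection of \eqref{eq:bigm} onto $(\rho,\y)$ is exactly $\C_{\G}$, using the exactness of $\R$: $\R$ together with $z_e\in\{0,1\}$ for $e\in\E$ is an MIP formulation of $\M_{\G}$. One point must be fixed first. The bounds $\rho_L,\rho_U$ are assumed valid, meaning $0<\rho_L\le \frac{1}{1+\sum_{c\in\E}v_c\prod_{i\in c}x_i}\le \rho_U$ for every $\x\in\{0,1\}^N$. A natural choice is $\rho_L=1/(1+\sum_{e}v_e)$ and $\rho_U=1$; any $\rho_L>0$ below this suffices. We also read \eqref{eq:bigm} as imposing $\x$ binary. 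Since singleton hyperedges satisfy $z_i=x_i$, this makes $\z$ binary on singletons, and exactness of $\R$ then forces $z_e=\prod_{i\in e}x_i$ for every $e$, hence $z_e\in\{0,1\}$. If exactness is stated only with $z_e\in\{0,1\}$ imposed explicitly, that restriction is understood to be part of the formulation.

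\textbf{Inclusion $\C_{\G}\subseteq\proj$.} Take $(\rho,\y)\in\C_{\G}$ generated by some $\x\in\{0,1\}^N$. Set $z_e=\prod_{i\in e}x_i$, so $\z\in\M_{\G}$. By exactness, $\z\in\R$ with suitable auxiliary $\w$. Then $y_e=\rho z_e$, $\rho\ge 0$, and $\rho+\sum_e v_ey_e=1$ follows directly from the definition of $\C_{\G}$. It remains to check \eqref{eq:bigm-1}. Since $\rho\in[\rho_L,\rho_U]$ and $z_e\in\{0,1\}$, the McCormick envelope is valid at this point: the product $y_e=\rho z_e$ lies between the two under- and over-estimators. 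The cases are immediate. If $z_e=0$, the envelope gives $\max\{0,\rho-\rho_U\}=0\le y_e\le\min\{\rho-\rho_L,0\}=0$. If $z_e=1$, it gives $\max\{\rho_L,\rho\}=\rho\le y_e\le \min\{\rho,\rho_U\}=\rho$.

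\textbf{Inclusion $\proj\subseteq\C_{\G}$.} Take a feasible point $(\rho,\y,\z,\w,\x)$ of \eqref{eq:bigm}. As argued above, $\z\in\M_{\G}$ with $z_e=\prod_{i\in e}x_i$. Now use \eqref{eq:bigm-1} edge by edge.

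\emph{Case $z_e=0$.} The constraints give $0=\rho_L z_e\le y_e\le \rho_U z_e=0$, so $y_e=0$.

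\emph{Case $z_e=1$.} The constraints give $\rho\le y_e\le\rho$, so $y_e=\rho$. This uses $\rho_Uz_e+\rho-\rho_U=\rho$ and $\rho_Lz_e+\rho-\rho_L=\rho$.

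In both cases $y_e=\rho z_e$, and notably this does not require $\rho\in[\rho_L,\rho_U]$. Substituting into the normalization gives $\rho(1+\sum_c v_cz_c)=1$, so $\rho=1/(1+\sum_cv_c\prod_{i\in c}x_i)$ and $y_e$ matches the definition in \eqref{eq:cg:set}. Hence $(\rho,\y)\in\C_{\G}$.

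\textbf{Main obstacle.} The algebra is routine. The delicate step is the first paragraph: making precise that binary $\x$, through $z_i=x_i$ and the exactness of $\R$, pins every $z_e$ to a binary monomial value. Validity of $\rho_L,\rho_U$ is needed only for the first inclusion.
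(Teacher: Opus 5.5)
Your proposal is correct and follows essentially the same two-inclusion argument as the paper. Exactness of $\R$ ties $z_e$ to $\prod_{i\in e}x_i$, the McCormick envelope \eqref{eq:bigm-1} collapses to $y_e=\rho z_e$ at binary $z_e$, and normalization then recovers $\C_{\G}$. Your explicit handling of the validity of $\rho_L,\rho_U$ (needed only for the inclusion of $\C_{\G}$ in the projection), and of why $\z$ must be binary before exactness is invoked, makes precise two points the paper's proof leaves implicit.
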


In contrast to the perspective formulation, the Big-M formulation does not, in general, preserve the local sharpness of the relaxation oracle. The following example illustrates this gap.
\begin{example}
    Consider a simple graph $\G(V,\E)$, where $V=\{1,2\}$ and $\E = \{(1) ,(2), (1,2)\}$. Since $\G$ is a series-parallel graph, the choice availability set admits a locally sharp formulation by Proposition~\ref{prop:odd}. We can obtain the convex hull of $\C_{\G}$ as follows.
\begin{subequations} \label{eq:sharp}
\begin{alignat}{3}   
         & \rho + v_1 y_1 + v_2 y_2 + v_{12} y_{12} = 1 \label{eg:1}\\
    & 0 \le y_e \le \rho \quad e\in \{(1) ,(2), (1,2)\}\label{eg:2}\\ 
& y_1 +y_2 -\rho \le y_{12} \le  \min\{y_1,y_2\}. \label{eg:3}
    \end{alignat}
\end{subequations}
     Via the Fourier-Motzkin elimination, we project the linear relaxation of the Big-M formulation~\eqref{eq:bigm} to the choice probability space $\C_{\G}$ and obtain:
\begin{align*}
&  \eqref{eg:1}, \eqref{eg:2}\\
  & y_1 + y_2  + \rho -2 \le  y_{12}  \le y_i + 1 -\rho \quad i \in \{1,2\} .
\end{align*}
Suppose $v_1 =v_2=v_{12}=1$. The above polyhedron contains the node $(\rho, y_1,y_2 ,y_{12}) = (0.5,0,0,0.5)$, which lies outside the convex hull of $\mathcal{C}_{\G}$ defined in~\eqref{eq:sharp}. \hfill \Halmos
\end{example}

Note that, relative to the perspective formulation, the Big-M formulation introduces additional variables $\z$. An advantage of explicitly modeling $\z$ is that it enables us to use conic inequalities that further strengthen formulation~\eqref{eq:bigm}.
\begin{remark}
Following \cite{sen2018conic}, we augment~\eqref{eq:bigm} with the following second-order cone constraints:
\begin{subequations}\label{eq:conic}
    \begin{alignat}{3}
        &\rho\bigl(1 + \sum_{e\in \E}v_ez_e\bigr)\geq 1,  \label{eqc:1}\\
      &y_e \bigl(1 + \sum_{e\in \E}v_ez_e\bigr)\geq z_e^2,\quad \forall e \in \E. \label{eqc:2}  
    \end{alignat}
\end{subequations}
The first inequality follows from the definition of the no-purchase variable $\rho$. The second inequality leverages the relationship $y_e \ge \rho z_e$ together with the binary identity $z_e = z_e^2$. \hfill \Halmos
\end{remark}

Finally, we compare the strength of the two classes of formulations. A key measure of the quality of an MIP formulation is the tightness of its natural continuous relaxation, obtained by dropping the integrality constraints. This criterion is particularly relevant in practice, since tighter relaxations typically lead to stronger bounds and faster convergence of the branch-and-bound algorithms underlying modern commercial MIP solvers. Because the two formulations are expressed in different variable spaces, we compare them by projecting their continuous relaxations onto the space of choice probability variables. Specifically, let $\R_{\text{pers}}$ and $\R_{\text{Big-M}}$ denote the projections of the natural relaxations of \eqref{eq:pers:oracle} and \eqref{eq:bigm}, respectively, onto the space of $(\rho,\y)$ variable.

\begin{theorem}\label{thm:compare}
For any relaxation oracle $\R$, $\R_{\text{pers}} \subseteq \R_{\text{Big-M}}$. 
\end{theorem}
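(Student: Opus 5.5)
The plan is a direct lifting argument. Take any point $(\rho,\y)\in\R_{\text{pers}}$, together with the auxiliary variables $(\x,\w)$ that witness feasibility in the LP relaxation of \eqref{eq:pers:oracle}. I will build variables $(\z,\x')$ such that $(\rho,\y,\z,\x')$ satisfies the LP relaxation of \eqref{eq:bigm}. The candidate is the one suggested by the Charnes--Cooper identity $y_e=\rho z_e$: set $z_e := y_e/\rho$ for all $e\in\E$ and $x'_i := z_i = y_i/\rho$ for $i\in V$. The oracle auxiliaries are rescaled as $\w' := \w/\rho$. Note that the linking constraints \eqref{eq:pers:oracle-4}--\eqref{eq:pers:oracle-5} are not needed at all; only \eqref{eq:pers:oracle-1}--\eqref{eq:pers:oracle-2} will be used.

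First I establish that the bounds on $\rho$ hold. Constraint \eqref{eq:pers:oracle-1} gives $0\le y_e\le\rho$. If $\rho=0$, then $\y=0$, which contradicts \eqref{eq:pers:oracle-2}; hence $\rho>0$ and $\z$ is well defined. Since $y_e\ge 0$, \eqref{eq:pers:oracle-2} gives $\rho\le 1$. Since $y_e\le\rho$, it gives $\rho(1+\sum_e v_e)\ge 1$. So $\rho\in[\rho_L,\rho_U]$ whenever $\rho_L\le 1/(1+\sum_{e}v_e)$ and $\rho_U\ge 1$. I take these to be the (valid) bounds used in \eqref{eq:bigm}; any valid bounds derived from \eqref{eq:pers:oracle-2} and $0\le y_e\le\rho$ would serve equally well. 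This is the step I expect to need the most care. If the Big-M bounds were computed from a tighter reasoning that uses the integrality of $\x$, I would instead show that the same reasoning applies to the perspective relaxation. The most natural route is to use that $\y/\rho\in\R\subseteq[0,1]^{|\E|}$ exactly mirrors $\z\in\R$.

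Next, oracle membership. Dividing $\BFA\y+\BFB\w\le\BFb\rho$ by $\rho>0$ yields $\BFA\z+\BFB\w'\le\BFb$. Together with $0\le y_e\le\rho$, this gives $\z\in[0,1]^{|\E|}$, so $\z\in\R$. It also gives $\x'\in[0,1]^{|V|}$, which is consistent with the relaxed version of \eqref{eq:bigm:0} and with the convention $z_e=x_e$ for singletons. The normalization $\rho+\sum_e v_ey_e=1$ and $\rho\ge0$ carry over unchanged.

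Finally, I verify the McCormick constraints \eqref{eq:bigm-1}, using $y_e=\rho z_e$, $z_e\in[0,1]$ and $\rho_L\le\rho\le\rho_U$. The four inequalities follow from
\begin{align*}
y_e-\rho_Lz_e &= (\rho-\rho_L)z_e\ge 0, & y_e-(\rho_Uz_e+\rho-\rho_U) &= (\rho_U-\rho)(1-z_e)\ge0,\\
\rho_Lz_e+\rho-\rho_L-y_e &= (\rho-\rho_L)(1-z_e)\ge0, & \rho_Uz_e-y_e &= (\rho_U-\rho)z_e\ge0.
\end{align*}
Hence $(\rho,\y)\in\R_{\text{Big-M}}$, which proves $\R_{\text{pers}}\subseteq\R_{\text{Big-M}}$.
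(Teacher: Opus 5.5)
Your proof is correct and follows the paper's argument: define $z_e=y_e/\rho$ (with $\rho\in(0,1]$ forced by the normalization and $0\le y_e\le\rho$), unscale the oracle constraints, and check the four McCormick inequalities through the same product factorizations, which the paper carries out only for $\rho_L=0$, $\rho_U=1$. Your version is slightly more careful in two places: setting $x_i:=y_i/\rho$ makes the singleton convention $z_i=x_i$ hold explicitly, which the paper leaves implicit, and you allow general bounds. Your worry about tighter bounds is unnecessary, because $\rho=1$ (empty assortment) and $\rho=1/(1+\sum_e v_e)$ (full assortment) both occur in $\C_{\G}$, so any valid $[\rho_L,\rho_U]$ must already contain that interval.
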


\section{Estimation}\label{sec:estimation}
In this section, we show an empirical study of estimating~\eqref{eq:choice}. In particular, we provide a heuristic to construct the consideration set $\E$ from data.


\subsection{Data and Multi-Purchase Hypergraph Estimation}\label{sec:est:method}
We use a daily transaction dataset from a bakery~\citep{bakerydata}. The data for each transaction includes a timestamp, the set of purchased products, and the total price, characterizing the daily sales of different bundles. The data could be summarized as $\{S_t, (n_{te})_{e\in H}\}_{t\in [T]}$, where $S_t$ is the assortment on day $t$ and $n_{te}$ denotes the sale of bundle $e$ on day $t$. $H$ is the collection of historical bundles.

We focus on transaction data from July 2021, a month exhibiting a relatively high proportion of multi-item purchases. To mitigate potential estimation bias arising from stockouts, we restrict our attention to morning transactions. After preprocessing, the final dataset comprises 5,614 transactions. To alleviate data sparsity, we follow~\cite{akchen2025consider} by retaining only the top 50\% of products by sales and aggregating the remainder into the outside option. This process yields a set of 27 products across two categories. The bundle size distribution reveals that 50\% of transactions involve a single product, 27.6\% involve two, 15.4\% involve three, and 7\% involve more than three. Among the multi-purchase transactions, 51.48\% consist of products from the same category, while 48.52\% span both categories. This near-even split highlights the necessity of a general multi-purchase model, i.e., the hypergraph representation, capable of capturing intra- and cross-category interactions.

Our estimation procedure for the multi-purchase hypergraph consists of three main stages: defining candidate hypergraphs, applying MLE to estimate edge weights (i.e., the bundle utilities $\boldsymbol{u}$), and selecting the best-fitting hypergraph based on predictive performance evaluated via cross-validation. Formally, we define the node set $V$ as the collection of the 27 individual products. To construct the edge set $\E$, we employ a heuristic governed by two parameters: the maximum bundle size $d$ and a sparsity threshold $\theta$. Specifically, let $\E_d$ denote the set of all possible bundles of size at most $d$. We then form the refined edge set $\E_{d,\theta}$ by retaining all single-item options alongside the multi-item bundles whose empirical purchase frequencies rank in the top $\theta$ fraction of $\E_d\setminus V$. By varying $d$ and $\theta$, we can generate a sequence of candidate hypergraphs $\G(V, \E_{d,\theta})$. This heuristic construction is motivated by the data observation that customers' purchased bundles are highly concentrated on a few specific combinations with a size of less than four. By filtering the edge set in this manner, we can limit the model complexity while preserving most information in the data.

Given a hypergraph structure $\G(V,\E_{d,\theta})$, the second step, estimation of utility, has been well studied in the literature. Studies often assume that the bundle utility consists of single-item and pairwise interaction effects \citep{russell2000analysis, aurier2014multivariate}, i.e., 
\begin{equation}\label{eq:utility}
    u_e = \sum_{i\in e} \alpha_i + \sum_{i<j,\; i,j\in e} \beta_{ij}
\end{equation}
In high-dimensional settings, regularization is used to induce sparsity \citep{benson2018discrete, vasilyev2025assortment}. Under these structure assumptions, parameter estimation via MLE is computationally tractable, leading to widespread empirical applications \citep[e.g.][etc]{hruschka1999cross,boztug2008modeling,richards2018retail}. We follow the linear utility specification~\eqref{eq:utility} and further assume that $\alpha_i = \sum_{j=1,2}\eta_j h_{ij} + \eta_r r_i$, where $h_{ij}$ denote whether product $i$ is in category $j$.
$\eta_j$ is the fixed effect of category $j$ and $\eta_r$ is the price elasticity.

We then employ five-fold cross-validation to evaluate the predictive performance of~\eqref{eq:choice} under different candidate hypergraphs. Let $\p_t$ denote observed market share and $\hat{\p}_t$ the corresponding model predictions. Out-of-sample performance is evaluated using two standard measures of distributional fit: $\chi^2 = \sum_{t\in T} \sum_{e\in \E_{e,d}} (p_{t,e}-\hat{p}_{t,e})^2/p_{t,e}$ and mean squared error (MSE), $\sum_{t\in T} \sum_{e\in \E_{d,\theta}}(p_{t,e}-\hat{p}_{t,e})^2$, where $T$ is the set of days in the test dataset.  

\subsection{Estimation Results}\label{sec:est:result}
Table~\ref{tab:predict} summarizes the estimation results. The first two columns correspond to the maximum bundle size and the sparsity level, respectively. When $d=1$,~\eqref{eq:choice} reduces to the standard MNL choice model. Two ``Improve" columns following $\chi^2$ and MSE statistics present the relative performance gains of~\eqref{eq:choice} over the MNL benchmark. Overall,~\eqref{eq:choice} outperforms the MNL model in most specifications. Notably, when the maximum bundle size is restricted to 2, and the sparsity level is set to $\theta = 0.1$, it improves out-of-sample prediction by more than 10\% in terms of the $\chi^2$ metric. Increasing the sparsity level (i.e., considering a denser hypergraph) does not necessarily reduce prediction loss and can even deteriorate performance. This finding suggests that a sparse multi-purchase structure is sufficient to capture the main cross-item interactions in practice, thereby simplifying the optimization problem without compromising predictive accuracy.
\begin{table}[H]
\TABLE
{{Out-of-sample performance}\label{tab:predict}}
{ 
\begin{tabular}{cccccc}
\hline
$d$ & $\theta$ & $\chi^2$ & Improve(\%) & MSE & Improve(\%) \\ \hline
1          & -             & 5.4898               &        -              & 0.0098       &        -              \\
2          & 0.1            & 4.8560               & 11.54                & 0.0095       & 3.06                 \\
2          & 0.2            & 4.9600               & 9.65                 & 0.0096       & 2.04                 \\
2          & 0.3            & 5.1468               & 6.25                 & 0.0101       & -3.06                \\
3          & 0.1            & 4.8560               & 11.54                & 0.0095       & 3.06                 \\
3          & 0.2            & 4.9600               & 9.65                 & 0.0096       & 2.04                 \\
3          & 0.3            & 5.1468               & 6.25                 & 0.0101       & -3.06                \\ \hline
\end{tabular} 
}
{}
\end{table}

\section{Assortment Optimization under the Logit-MP Model}\label{sec:app} 
In this section,  we use the perspective formulation of the choice probability set $\C_{\G}$, introduced in Section~\ref{sec:model}, to globally solve the assortment planning problem under the~\ref{eq:choice} model,  and investigate its computational performance. 

\subsection{Problem Definition and Formulation}
In an assortment planning setting, a firm has access to a finite set of products, among which it needs to select a subset to offer to its customers. Customers decide either to purchase a bundle of the products or leave without purchasing according to the stochastic choice model given in~\eqref{eq:choice}. The goal of the firm is to find a set of products to offer so as to maximize the expected revenue. Formally, consider a hypergraph $\G = (V, \E)$, where the node set $V$ represents the set of potential products that can be offered, and the hyperedge set $\E$ represents the bundles that customers are willing to consider ({i.e.}, the consideration set). The firm solves
\begin{equation*}\label{eq:assortment-single}
    \max_S \biggl\{\sum_{e \in \E} r_e  P_{\E}(e,S) \biggm| S \in \mathcal{X} \biggr\}, \tag{\textsc{AOpt}}
\end{equation*}
where $r_e$ denotes the revenue obtained from selling bundle $e$, $P_{\E}(e,S)$ is the probability that bundle $e$ is chosen when assortment $S$ is offered, as defined in~\eqref{eq:choice}, and $\mathcal{X}$ is a set of subsets of $V$ modeling operational constraints. When $\E$ contains only singleton bundles, problem~\eqref{eq:assortment-single} reduces to the constrained assortment optimization problem under the MNL model.  For unconstrained instances, the optimal solution is known to follow a revenue-ordered structure \citep{talluri2004revenue}. Furthermore, \cite{sumida2021revenue} establish an LP formulation for cases characterized by total unimodularity. This LP approach has been generalized in~\cite{chen2025integer} to accommodate any constraint set provided its convex hull admits a tractable LP representation.

However, the assortment optimization problem becomes significantly more challenging when accounting for multi-purchase behavior. In this setting, the classical revenue-ordered property no longer holds. Moreover, for a general consideration set $\E$, the problem is not only intractable but NP-hard to approximate, as established by the following complexity result. 

\begin{theorem}\label{them:apx}
The assortment optimization problem~\eqref{eq:assortment-single} is NP-hard to approximate within factor $O(\vert V \vert^{1-\epsilon})$ for any fixed $\epsilon \in (0,1)$ even when this is no constraint.
\end{theorem}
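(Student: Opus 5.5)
We reduce from Maximum Independent Set, which is NP-hard to approximate within $n^{1-\epsilon}$ for any fixed $\epsilon>0$ (H\aa stad; Zuckerman). The reduction is approximation-preserving. Given a graph $H=(W,F)$ with $n=|W|$ nodes, we build an instance of \eqref{eq:assortment-single} with $V=W$ and no operational constraint. The consideration set is $\E = V \cup F$, so every edge of $H$ is a two-product bundle; the resulting hypergraph has rank $2$, which is a BundleMVL-2 instance.

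The pricing is designed so that offering two adjacent products destroys revenue. Each singleton gets attraction $v_i=\exp(u_i)=1/n$ and revenue $r_i=1$. Each bundle $e\in F$ gets revenue $r_e=0$ and a large attraction $v_e=M$, with, say, $M=n^2$. For an assortment $S$, let $I(S)$ be the number of edges of $H$ inside $S$. The revenue is then
\[
R(S)=\frac{|S|/n}{1+|S|/n+M\, I(S)} .
\]
If $S$ is independent, $I(S)=0$ and $R(S)=|S|/(n+|S|)\in[|S|/(2n),\,|S|/n]$. So an independent set of size $\alpha(H)$ gives $\mathrm{OPT}\ge \alpha(H)/(2n)$.

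The key step is the converse: from any assortment we can efficiently extract an independent set with comparable revenue. First, if $I(S)\ge 1$ then $R(S)\le 1/M = 1/n^2$. We can also pick any single product, which is independent, and get revenue $1/(n+1)$, far better. More carefully, we should not throw $S$ away but repair it greedily: while $S$ contains an edge, delete one endpoint. Each deletion removes at least one internal edge, so we end with an independent set $S'$ with $|S'|\ge |S|-I(S)$. Using $R(S)\le |S|/(n(1+M I(S)))$, a case split gives the bound. If $I(S)=0$, then $S$ is already independent. If $I(S)\ge1$, then $R(S)\le |S|/(nM)\le 1/n^2 < R(\{i\})$ for any single $i$. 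In both cases we obtain an independent set $T$ with $R(T)\ge R(S)$. Since $R(T)\le |T|/n$, it follows that $|T|\ge n R(S)$.

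Now suppose an algorithm achieves factor $\gamma=O(n^{1-\epsilon})$ on \eqref{eq:assortment-single}. It returns $S$ with $R(S)\ge \mathrm{OPT}/\gamma\ge \alpha(H)/(2n\gamma)$. Converting gives an independent set of size at least $\alpha(H)/(2\gamma)$, which is an $O(n^{1-\epsilon})$-approximation for Maximum Independent Set and contradicts hardness unless P$=$NP. Since $|V|=n$, the factor is stated in terms of $|V|$ as required.

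Two points deserve care. The first is that all numbers have polynomial encoding size; $u_i=-\ln n$ and $u_e=2\ln n$ are fine, or we can work directly with the attractions $v$. The second is whether zero bundle revenue is allowed. If strictly positive revenues are required, setting $r_e$ to an arbitrarily small $\delta>0$ perturbs $R$ by at most $\delta$, which is negligible against $1/n^2$.

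The main obstacle is the structural dominance step: showing that any assortment containing an edge is beaten by an independent set. The constant $M$ must be large enough that the penalty from one bundle's attraction outweighs any gain from extra singletons. I expect the crude bound $R(S)\le 1/M$ together with $M=n^2$ to suffice.
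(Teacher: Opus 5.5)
Your proposal is correct and follows essentially the same reduction as the paper. Both start from Maximum Independent Set with $\E = V \cup E$, small singleton attractions and large zero-revenue edge bundles, so that any non-independent assortment earns at most $1/M$ while an independent set earns revenue within a factor $2$ of $|S|/n$. The differences are minor. Your parameters are $v_i=1/n$, $r_i=1$, $M=n^2$ instead of $1/N^2$, $N$, $N^3$. You also replace a non-independent output by a singleton, rather than arguing, as the paper does, that the approximate solution must itself be independent for sufficiently large $N$. Your greedy-repair remark is unnecessary, since the singleton case split already does the work.
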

Consequently, recent literature has primarily focused on designing approximation algorithms or heuristics tailored to a specific, highly structured consideration set $\E$. For example, under the MVMNL model, \cite{chen2022assortment} design a constant-approximation algorithm limited to a two-category setting, while \cite{jasin2024assortment} propose an FPTAS for general categories. Similarly, for the BundleMVL-2 model, \cite{tulabandhula2023multi} propose a heuristic algorithm.   

Next, we propose an exact MIP formulation to obtain the globally optimal solution for~\eqref{eq:assortment-single} under an arbitrary consideration set. Assume that the constraint set $\mathcal{X}$ can be described using a system of linear inequalities and binary constraints $\x \in \{0,1\}^V$. By leveraging our perspective formulation~\eqref{eq:pers:oracle} to model the set of all possible choice probabilities under~\eqref{eq:choice}, we obtain the following MIP:
\begin{subequations}\label{eq:assort:model}
\begin{alignat}{3}  
     \max \limits_{\rho,\x,\y} \biggl\{ \sum_{e\in \E} r_e y_e v_e   \label{eq:assort} \biggm| (\rho,\y,\x) \text{ satisfies }  ~\eqref{eq:pers:oracle}, \x \in \mathcal{X} \biggr\}. \tag{\textsc{Pers}}
\end{alignat}
\end{subequations} 
The choice of the relaxation oracle~$\R$ in~\eqref{eq:pers:oracle} directly affects the quality of the resulting formulation. Computationally, a stronger relaxation oracle $\R$ leads to a tighter formulation and typically improves branch-and-bound performance, as shown in Section~\ref{sec:single:numerical}. Theoretically, when the oracle is locally sharp, the LP relaxation~\eqref{eq:assort}, obtained by dropping the binary requirement on $\x$, suffices to solve the assortment optimization problem~\eqref{eq:assortment-single}. As a consequence of Propositions~\ref{prop:our:exact},~\ref{prop:odd}, and ~\ref{prop:exact:running}, we obtain the following characterization. 
\begin{corollary}\label{coro:single}
The formulation~\eqref{eq:assort} exactly solves the assortment optimization problem~\eqref{eq:assortment-single} when the relaxation oracle is given by~\eqref{eq:RMC}.  The LP relaxation of~\eqref{eq:assort} solves~\eqref{eq:assortment-single} when the problem is unconstrained, that is, $\mathcal{X} = \{0,1\}^{|V|}$, the multi-purchase hypergraph $\G$ is a series-parallel (resp. kite-free $\beta$-acyclic) graph, and the relaxation oracle is derived by~\eqref{eq:RMC} and~\eqref{eq:odd} (resp.~\eqref{eq:running}). 
\end{corollary}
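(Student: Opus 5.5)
The plan is to derive Corollary~\ref{coro:single} in two parts, both from the preservation results already established.

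\emph{Exactness with \eqref{eq:RMC}.} By Proposition~\ref{prop:our:exact}, with $\R$ given by \eqref{eq:RMC}, the system \eqref{eq:pers:oracle} is an MIP formulation of $\C_{\G}$. So the projection of its feasible set onto $(\rho,\y)$, with $\x$ binary, is exactly $\C_{\G}$. I would also check that the accompanying $\x$ is the assortment that generates $(\rho,\y)$. Constraints \eqref{eq:pers:oracle-4}--\eqref{eq:pers:oracle-5} force this: when $\x$ is binary and $\y=\rho\z$ with $z_e=\prod_{i\in e}x_i$, both bounds collapse to $x_i=\rho x_i(1+\sum_e v_e z_e)$.

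Now fix $S\in\mathcal{X}$ with indicator $\x$. The point $\rho=1/(1+\sum_{c\in 2^S\cap\E}v_c)$, $y_e=\rho\,\ind(e\subseteq S)$ is feasible. Its objective $\sum_e r_e v_e y_e$ equals $\sum_e r_eP_{\E}(e,S)$, because $v_e=\exp(u_e)$. Conversely, every feasible point of \eqref{eq:assort} arises from some $S\in\mathcal{X}$ in this way. Hence the optimal values coincide and optimal solutions map to optimal assortments.

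\emph{LP sharpness in the unconstrained case.} Take $\mathcal{X}=\{0,1\}^{|V|}$, so the only constraints are those in \eqref{eq:pers:oracle}. In the series-parallel case with oracle \eqref{eq:RMC} plus \eqref{eq:odd}, Proposition~\ref{prop:odd} says the formulation is locally sharp. In the kite-free $\beta$-acyclic case with \eqref{eq:RMC} plus \eqref{eq:running}, Proposition~\ref{prop:exact:running} says the same. In both cases, the projection $P$ of the LP relaxation onto $(\rho,\y)$ equals $\conv(\C_{\G})$. The objective $\sum_e r_ev_ey_e$ depends only on $(\rho,\y)$ and is linear, so the LP optimum equals $\max\{\sum_e r_ev_ey_e : (\rho,\y)\in\conv(\C_{\G})\}$.

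A linear function attains its maximum over the convex hull of a finite set at a point of that set. Here $\C_{\G}$ is finite, indexed by $\x\in\{0,1\}^N$. So the LP value equals the optimal value of \eqref{eq:assortment-single}. Moreover, an optimal vertex of $P$ lies in $\C_{\G}$, and an assortment generating it can be read off: for instance, take $x_i=1$ iff $y_i>0$, which works since $y_i=\rho x_i$ and $\rho>0$ on $\C_{\G}$.

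\emph{Main obstacle.} The subtle point is that local sharpness is stated in the projected $(\rho,\y)$ space, while the LP optimum is attained in the lifted space $(\rho,\y,\x,\w)$, where $\x$ may be fractional. I would handle this by arguing entirely through values, which the projection argument already does. An optimal LP solution projects to a point of $\conv(\C_{\G})$. An optimal face of that polytope contains an extreme point of $\C_{\G}$, and that extreme point is attained by an integral $\x$ through the first part. Therefore the LP relaxation solves the problem in the sense of returning the correct optimal value and an optimal vertex.

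One side condition needs care: $\mathcal{X}$ must impose no extra linear constraints. With additional constraints, the projection could be strictly smaller than the convex hull intersected with those constraints, and the argument would fail. This is why the statement restricts to the unconstrained case.
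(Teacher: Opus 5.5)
Your proof is correct and follows the paper's route. For the first claim you use exactness from Proposition~\ref{prop:our:exact} (i.e., Theorem~\ref{theorem:main}); for the second you use local sharpness from Propositions~\ref{prop:odd} and~\ref{prop:exact:running}, plus the fact that a linear objective over $\conv(\C_{\G})$ is maximized at a point of the finite set $\C_{\G}$. You are in fact more careful than the paper in checking that the binary $\x$ is forced to be the generating assortment (this is the sandwich $y_i \le x_i \le y_i + 1 - \rho$ from the proof of Theorem~\ref{theorem:main}) and that $\sum_e r_e v_e y_e$ equals the expected revenue. One small slip in your closing aside: with extra constraints on $\x$, the danger is that the projected LP relaxation is strictly \emph{larger} than the convex hull of the constrained choice set, not smaller.
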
 
\begin{remark}
We relate our result to existing  LP characterizations in the literature.~\cite{lo2019assortment} investigate an assortment optimization under product synergies effects where a product's attractiveness depends on the offering of other products. By modeling these interactions using a graph, where nodes represent products and edges denote pairwise synergy effects, they show that the problem admits an exact LP formulation when the underlying graph is a path. Our result generalizes this to a series-parallel graph, and even to a kite-free $\beta$-acyclic hypergraph. \hfill \Halmos

\end{remark}

\subsection{Computational Performance}\label{sec:single:numerical}
In this subsection, we conduct a computational study to evaluate the relative performance of three alternative formulations for~\eqref{eq:assortment-single}: the perspective formulation~\eqref{eq:pers:oracle}, the Big-M formulation~\eqref{eq:bigm}, and the conic relaxation defined by~\eqref{eq:bigm} and~\eqref{eq:conic}. To assess the impact of different polyhedral relaxations, we test three variants of the oracle $\R$,  defined as follows:
\begin{itemize}
    \item $\R_\texttt{RMC}$: We use recursive McCormick inequalities~\eqref{eq:RMC} to construct~$\R$.
    \item $\R_\texttt{RMC\&ODD}$: We add odd-cycle inequalities~\eqref{eq:odd} to $\R_\texttt{RMC}$.  
    \item $\R_\texttt{RMC\&RIC}$: We add running-intersection inequalities~\eqref{eq:running} to $\R_\texttt{RMC}$. 
\end{itemize}
All numerical tests were implemented in the Julia~\citep{bezanson2017julia}, using modeling language JuMP~\citep{Lubin2023}, on a machine with an Intel(R) Core(TM) Ultra 7 265K CPU at 3.90\,GHz and 48\,GB RAM. All models were solved using Gurobi~12.0.3~\citep{gurobi}.

The numerical implementation of the three proposed formulations presents two primary computational challenges. First, the perspective formulation~\eqref{eq:pers:oracle} involves an exponentially large number of inequalities, specifically, $\mathcal{O}(|V|2^{|\E\setminus V|})$. Second, we need an efficient identification of odd-cycle or running-intersection structures in the multi-purchase hypergraph $\G$ to generate valid inequalities.  To ensure computational tractability, we develop an iterative cutting-plane algorithm that avoids explicit enumeration of the full constraint set. The procedure is structured as follows:

\begin{itemize}
    \item Solve a relaxed master formulation with a subset of constraints.

\item Invoke two dedicated separation oracles to identify violated inequalities from constraints~\eqref{eq:pers:oracle-4} and~\eqref{eq:pers:oracle-5} and the relaxation oracle $\R$. These constraints can tighten the formulation.

\item Once no further violations are detected, we solve the tightened formulation with binary constraints to obtain an optimal integer solution.
\end{itemize}
Comprehensive details regarding the cutting-plane implementation are provided in~\ref{ec:sec:algo}.

We generate synthetic instances as follows. To construct the multi-purchase hypergraph, we randomly separate $N$ products into three categories. We follow the case study observation that most customers purchase no more than three products in a single transaction, and set the maximum bundle size as $d\in \{2,3\}$. We set the sparsity level at 0.25. Given a rank $d$, we generate two types of bundles: $\pi\%$ cross-category bundles and $(1- \pi)\%$ intra-category bundles. For example, when the number of products $N=10$, rank $d=2$, sparsity level $\theta=2/9$, and a consideration rule $f$ characterized by BundleMVL-2. If $\pi=0.5$, we have 5 cross-category bundles and 5 intro-category bundles. To assign attraction value $\v$ for each bundle, we first draw a latent value $\alpha_{i}$ from a uniform distribution $\mathcal{U}(-1.5,0.5)$ for each $i \in V$. Then, the attraction value of a bundle $e \in \E$ is given by 
$
v_e = \exp\left( \sum_{i \in e} \alpha_{i} +  \sum_{i<j, i,j\in e} \beta_{ij} \right),
$
where $\beta_{ij} \sim \mathcal{U}(-1.25,0.75)$. We set the range of these uniform distributions based on estimators in the case study. We normalize the no-purchase option attraction value to 1. For each product $i \in V$, the revenue is sampled independently from $r_i \sim \mathcal{U}(1, 4)$. The revenue of a bundle $e \in E$ is additive, i.e.,
$
r_e = \sum_{i \in e} r_i.
$ Finally, we define the operational constraint as
$
\mathcal{X} = \left\{ \x\in \{0, 1\}^{|V|} \,\middle|\, \sum_{i \in V} x_i \leq 0.2  |V| \right\}.
$

For each parameter configuration, we generate 10 random instances following the aforementioned procedure. All experiments are conducted with a time limit of 3,600 seconds and a relative optimality tolerance of 0.05\%. We evaluate the three models based on their ability to identify optimal integer solutions and the strength of their LP relaxations. Specifically, we report the following performance metrics:
\begin{itemize}
    \item \#sol: the number of instances solved to global optimality within the time limit
    \item Time(s): the average computation time in seconds. If a separation procedure is involved, its computational time is indicated in parentheses. 
    \item Gap (\%): the average relative optimality gap at termination, calculated as $\text{Gap} = 100\% \times \frac{R_U - R^*}{R^*}$, where $R_U$ and $R^*$ denote the revenue of the best upper bound and the best integer solution obtained at termination, respectively. 
    \item  RGap (\%): the average root node gap, defined as $\text{RGap} = 100\% \times \frac{R_{\text{Rlx}} - R^*}{R^*}$, where $R_{\text{Rlx}}$ denotes the revenue of the LP relaxation of the given formulation. 
    \item \#node: The average number of branch-and-bound nodes explored in the search tree.
\end{itemize}

\begin{table}[H]
\TABLE 
{{Computation results for instances with pure intra-category bundles (the maximum bundle size $d=2$ and sparsity level $\theta= 0.25$)}\label{oc_1}}
{\begin{tabular}{c l rrrrr rrrrr}
\toprule
\multicolumn{1}{c}{Instance} & \multirow{2}{*}{Model} 
& \multicolumn{5}{c}{Recursive McCormick} 
& \multicolumn{5}{c}{RMC \& Odd Cycle} \\
\cmidrule(r){1-1} \cmidrule(lr){3-7} \cmidrule(l){8-12}
$N$ & & \#sol & Time(s) & Gap(\%) & RGap(\%) & \#node & \#sol & Time(s) & Gap(\%) & RGap(\%) & \#node \\
\midrule
\multirow{3}{*}{100} 
& Big-M & 10 & 2.9 (0.0) & 0.0 & 12.6 & 9737 & 10 & 3.4 (0.1) & 0.0 & 12.6 & 11705 \\
& CONIC & 10 & 7.7 (0.0) & 0.0 & 10.8 & 5405 & 10 & 8.9 (0.9) & 0.0 & 10.8 & 7841 \\
& Pers & 10 & 0.1 (0.1) & 0.0 & 0.0 & 1 & 10 & 0.1 (0.1) & 0.0 & 0.0 & 1 \\
\midrule
\multirow{3}{*}{500} 
& Big-M & 2 & 1622.9 (0.0) & 2.8 & 9.3 & 193394 & 3 & 1535.7 (4.0) & 2.4 & 9.3 & 164121 \\
& CONIC & 0 & - (0.0) & 7.9 & 9.0 & 10063 & 0 & - (3172.1) & 8.5 & 9.9 & 13645 \\
& Pers & 10 & 10.1 (5.2) & 0.0 & 0.0 & 1 & 10 & 10.0 (5.4) & 0.0 & 0.0 & 1 \\
\midrule
\multirow{3}{*}{1000} 
& Big-M & 0 & - (0.0) & 9.0 & 11.0 & 103476 & 0 & - (17.7) & 8.8 & 11.0 & 105271 \\
& CONIC & 0 & $\smallsetminus$ & $\smallsetminus$ & $\smallsetminus$ & $\smallsetminus$ & 0 & $\smallsetminus$ & $\smallsetminus$ & $\smallsetminus$ & $\smallsetminus$ \\
& Pers & 10 & 324.6 (60.5) & 0.0 & 0.0 & 1 & 10 & 329.5 (63.1) & 0.0 & 0.0 & 1 \\
\bottomrule
\end{tabular}}
{``-" in the Time(s) column indicates that the corresponding model fails to solve the MIP within 3600 seconds for all instances. ``$\smallsetminus$" for all metrics indicates that the corresponding model fails to solve the linear relaxation within 3600 seconds for all instances.}
\end{table}

\begin{table}[H]
\TABLE 
{{Computation results for instances with pure cross-category bundles (the maximum bundle size $d=2$ and sparsity level $\theta= 0.25$)}\label{oc_2}}
{\begin{tabular}{c l rrrrr rrrrr}
\toprule
\multicolumn{1}{c}{Instance} & \multirow{2}{*}{Model} 
& \multicolumn{5}{c}{Recursive McCormick} 
& \multicolumn{5}{c}{RMC \& Odd Cycle} \\
\cmidrule(r){1-1} \cmidrule(lr){3-7} \cmidrule(l){8-12}
$N$ & & \#sol & Time(s) & Gap(\%) & RGap(\%) & \#node & \#sol & Time(s) & Gap(\%) & RGap(\%) & \#node \\
\midrule
\multirow{3}{*}{100} & Big-M & 10 & 33.1 (0.0) & 0.0 & 11.0 & 34228 & 10 & 28.9 (0.1) & 0.0 & 11.0 & 30762 \\
 & CONIC & 10 & 238.5 (0.0) & 0.0 & 10.0 & 50232 & 10 & 216.3 (147.4) & 0.0 & 10.0 & 42329 \\
 & Pers & 10 & 0.2 (0.2) & 0.0 & 0.0 & 1 & 10 & 0.2 (0.2) & 0.0 & 0.0 & 1 \\
\midrule
\multirow{3}{*}{500} & Big-M & 0 & - (0.0) & 3.3 & 6.9 & 106343 & 0 & - (16.8) & 3.4 & 6.9 & 107076 \\
& CONIC & 0 & $\smallsetminus$ & $\smallsetminus$ & $\smallsetminus$ & $\smallsetminus$ & 0 & $\smallsetminus$ & $\smallsetminus$ & $\smallsetminus$ & $\smallsetminus$ \\
 & Pers & 10 & 83.4 (22.0) & 0.0 & 0.2 & 1 & 10 & 83.3 (22.4) & 0.0 & 0.2 & 1 \\
\midrule
\multirow{3}{*}{1000} & Big-M & 0 & - (0.0) & 5.1 & 6.6 & 32524 & 0 & - (29.1) & 5.1 & 6.6 & 32481 \\
& CONIC & 0 & $\smallsetminus$ & $\smallsetminus$ & $\smallsetminus$ & $\smallsetminus$ & 0 & $\smallsetminus$ & $\smallsetminus$ & $\smallsetminus$ & $\smallsetminus$ \\
 & Pers & 10 & 724.1 (269.6) & 0.0 & 0.0 & 1 & 10 & 725.4 (269.7) & 0.0 & 0.0 & 1 \\
\bottomrule
\end{tabular}}
{``-" in the Time(s) column indicates that the corresponding model fails to solve the MIP within 3600 seconds for all instances. ``$\smallsetminus$" for all metrics indicates that the corresponding model fails to solve the linear relaxation within 3600 seconds for all instances.}
\end{table}

When the maximum bundle size is two, we construct the oracle $\R$ using recursive McCormick and odd-cycle inequalities. Table~\ref{oc_1} and~\ref{oc_2} report computational results for instances with pure intra-category and cross-category bundles, respectively. Both tables show that the perspective formulation dominates across all instances. For small instances ($N=100$), all three models reach optimality, but our formulation is substantially faster, requiring less than 0.2 seconds, with near-zero root gaps and a single search node. The advantage grows sharply with $N$: for $N=500$ and $1000$, the perspective formulation solves all instances, while the Big-M and conic formulations solve only several instances with large remaining gaps. Moreover, in all instances, the perspective formulation can be solved to optimality at the root node. It suggests that the perspective formulation, combined with the recursive McCormick inequalities, is effective in handling the coupling between fractional and bilinear terms when the maximum bundle size is two. This observation is consistent with the limited impact of odd-cycle inequalities, which are infrequently generated and yield negligible improvements in the LP relaxation.

For a maximum bundle size of three, we represent $\mathcal{R}$ using recursive McCormick and running-intersection inequalities. Tables~\ref{tab:ric_1} and~\ref{tab:ric_2} report computational results for instances with pure intra-category and cross-category bundles, respectively. The perspective formulation again uniformly dominates. In contrast, the Big-M formulation fails to solve any instance to optimality within the 3600-second time limit, while the conic formulation fails to solve the root node relaxation.  A key additional insight from both tables is the impact of running-intersection inequalities. Incorporating these inequalities yields substantial computational gains, significantly increasing the number of instances solved within the time limit. In particular, for cross-category bundles, the perspective formulation equipped only with recursive McCormick relaxations fails to solve several instances within 3600 seconds. Augmenting the formulation with running-intersection inequalities resolves all instances and delivers speedups of up to a factor of five, even when accounting for the time required by the cutting-plane procedure.

Overall, these results demonstrate that for $d=3$, effectively capturing higher-order interactions requires both the perspective reformulation and strong polyhedral strengthening via running-intersection inequalities.

\begin{table}[H]
\TABLE 
{{Computation results for instances with pure intra-category bundles (the maximum bundle size $d=3$ and sparsity level $\theta= 0.25$)}\label{tab:ric_1}}
{\begin{tabular}{c l rrrrr rrrrr}
\toprule
\multicolumn{1}{c}{Instance} & \multirow{2}{*}{Model} 
& \multicolumn{5}{c}{Recursive McCormick} 
& \multicolumn{5}{c}{RMC \& Running-Intersection} \\
\cmidrule(r){1-1} \cmidrule(lr){3-7} \cmidrule(l){8-12}
$N$ & & \#sol & Time(s) & Gap(\%) & RGap(\%) & \#node & \#sol & Time(s) & Gap(\%) & RGap(\%) & \#node \\
\midrule

\multirow{3}{*}{180}   & Big-M & 0 & - (0.0) & 4.6 & 12.4 & 161636 & 0 & - (22.4) & 4.6 & 12.4 & 159971 \\
& CONIC & 0 & $\smallsetminus$ & $\smallsetminus$ & $\smallsetminus$ & $\smallsetminus$ & 0 & $\smallsetminus$ & $\smallsetminus$ & $\smallsetminus$ & $\smallsetminus$ \\
& Pers & 10 & 100.9 (5.5) & 0.0 & 1.9 & 2708 & 10 & 16.9 (41.7) & 0.0 & 0.5 & 78 \\

\midrule
\multirow{3}{*}{220}  & Big-M & 0 & - (0.0) & 6.3 & 12.6 & 80805 & 0 & - (11.3) & 6.3 & 12.6 & 79613 \\
& CONIC & 0 & $\smallsetminus$ & $\smallsetminus$ & $\smallsetminus$ & $\smallsetminus$ & 0 & $\smallsetminus$ & $\smallsetminus$ & $\smallsetminus$ & $\smallsetminus$ \\
& Pers & 10 & 471.4 (16.3) & 0.0 & 1.9 & 5619 & 10 & 58.4 (116.9) & 0.0 & 0.3 & 86 \\

\midrule
\multirow{3}{*}{260}   & Big-M & 0 & - (0.0) & 4.8 & 9.9 & 37087 & 0 & - (14.4) & 4.9 & 9.9 & 38646 \\
& CONIC & 0 & $\smallsetminus$ & $\smallsetminus$ & $\smallsetminus$ & $\smallsetminus$ & 0 & $\smallsetminus$ & $\smallsetminus$ & $\smallsetminus$ & $\smallsetminus$ \\
& Pers & 9 & 914.6 (31.3) & 0.6 & 1.9 & 4933 & 10 & 308.3 (266.7) & 0.0 & 0.4 & 629 \\
\bottomrule
\end{tabular}}
{``-" in the Time(s) column indicates that the corresponding model fails to solve the MIP within 3600 seconds for all instances. ``$\smallsetminus$" for all metrics indicates that the corresponding model fails to solve the linear relaxation within 3600 seconds for all instances.}
\end{table}

\begin{table}[H]
\TABLE 
{{Computation results for instances with pure cross-category bundles (the maximum bundle size $d=3$ and sparsity level $\theta= 0.25$)}\label{tab:ric_2}}
{\begin{tabular}{c l rrrrr rrrrr}
\toprule
\multicolumn{1}{c}{Instance} & \multirow{2}{*}{Model} 
& \multicolumn{5}{c}{Recursive McCormick} 
& \multicolumn{5}{c}{RMC \& Running-Intersection} \\
\cmidrule(r){1-1} \cmidrule(lr){3-7} \cmidrule(l){8-12}
$N$ & & \#sol & Time(s) & Gap(\%) & RGap(\%) & \#node & \#sol & Time(s) & Gap(\%) & RGap(\%) & \#node \\

\midrule
\multirow{3}{*}{100}   & Big-M & 0 & - (0.0) & 3.9 & 12.1 & 181968 & 0 & - (24.0) & 4.0 & 12.1 & 182549 \\
& CONIC & 0 & $\smallsetminus$ & $\smallsetminus$ & $\smallsetminus$ & $\smallsetminus$ & 0 & $\smallsetminus$ & $\smallsetminus$ & $\smallsetminus$ & $\smallsetminus$ \\
& Pers & 9 & 348.2 (4.8) & 1.1 & 2.7 & 24684 & 10 & 61.2 (67.4) & 0.0 & 0.4 & 1428 \\
\midrule
\multirow{3}{*}{120}  & Big-M & 0 & - (0.0) & 4.4 & 10.6 & 87927 & 0 & - (12.4) & 4.3 & 10.6 & 89345 \\
& CONIC & 0 & $\smallsetminus$ & $\smallsetminus$ & $\smallsetminus$ & $\smallsetminus$ & 0 & $\smallsetminus$ & $\smallsetminus$ & $\smallsetminus$ & $\smallsetminus$ \\
& Pers & 8 & 671.5 (10.6) & 1.3 & 2.5 & 20235 & 10 & 165.1 (197.1) & 0.0 & 0.4 & 2928 \\

\midrule

\multirow{3}{*}{140}   & Big-M & 0 & - (0.0) & 5.0 & 10.0 & 45415 & 0 & - (14.4) & 5.0 & 10.0 & 45499 \\
& CONIC & 0 & $\smallsetminus$ & $\smallsetminus$ & $\smallsetminus$ & $\smallsetminus$ & 0 & $\smallsetminus$ & $\smallsetminus$ & $\smallsetminus$ & $\smallsetminus$ \\
& Pers & 8 & 1945.6 (22.1) & 1.0 & 2.5 & 21319 & 10 & 89.3 (296.5) & 0.0 & 0.5 & 225 \\
\bottomrule
\end{tabular}}
{``-" in the Time(s) column indicates that the corresponding model fails to solve the MIP within 3600 seconds for all instances. ``$\smallsetminus$" for all metrics indicates that the corresponding model fails to solve the linear relaxation within 3600 seconds for all instances.}
\end{table}



Table~\ref{tab:sensetive} examines the performance of the perspective formulation as a function of the sparsity level and the proportion of cross-category bundles ($\pi$) for instances with $N=100$. Two variants are compared: the baseline using recursive McCormick (RMC) inequalities and the strengthened version incorporating running-intersection (RIC) inequalities.

Several patterns emerge. First, problem difficulty increases with both sparsity and the cross-category ratio. For a fixed sparsity level, increasing $\pi$ consistently leads to larger root gaps and longer solution times under the RMC relaxation. Similarly, for a fixed $\pi$, denser instances (higher sparsity) are computationally more challenging. Second, adding running-intersection inequalities significantly strengthens the formulation. Across all parameter settings, the RMC\&RIC variant yields substantially smaller root gaps compared to RMC alone. This improvement is particularly pronounced at higher values of $\pi$, where cross-category interactions are more prevalent. Moreover, the stronger relaxation translates into meaningful computational gains. Although the separation time increases when RIC inequalities are included, the overall solution time is typically reduced, often by a large margin. In particular, for instances with high cross-category ratios, the RMC\&RIC formulation achieves notable speedups despite the additional separation overhead.

Overall, the results highlight that cross-category interactions are a primary source of difficulty, and that running-intersection inequalities are highly effective in mitigating this challenge by tightening the relaxation and improving solution times.
\begin{table}[H]
\TABLE
{{Performance of perspective formulation under different sparsity levels and cross-category ratio ($N=100$)}\label{tab:sensetive}}
{\begin{tabular}{cc rrr rrr}
\toprule
\multicolumn{2}{c}{Parameters} & \multicolumn{3}{c}{Recursive McCormick} & \multicolumn{3}{c}{RMC \& Running-Intersection} \\
\cmidrule(r){1-2} \cmidrule(lr){3-5} \cmidrule(l){6-8}
Sparsity & Cross($\pi$) & RGap(\%) & SolTime(s) & SepTime(s) & RGap(\%) & SolTime(s) & SepTime(s) \\
\midrule
\multirow{4}{*}{0.15} & 0.2 & 1.66 & 3.42 & 0.83 & 1.01 & 2.23 & 1.60 \\
 & 0.4 & 2.38 & 25.24 & 1.52 & 1.70 & 11.18 & 3.51 \\
 & 0.6 & 2.36 & 45.77 & 1.58 & 1.53 & 37.79 & 6.88 \\
 & 0.8 & 2.76 & 112.08 & 2.09 & 1.38 & 50.77 & 16.39 \\
\midrule
\multirow{4}{*}{0.2} & 0.2 & 3.23 & 126.55 & 1.21 & 2.18 & 47.07 & 5.06 \\
 & 0.4 & 2.83 & 198.90 & 1.81 & 1.72 & 36.88 & 6.76 \\
 & 0.6 & 3.00 & 141.11 & 2.71 & 1.59 & 72.59 & 19.26 \\
 & 0.8 & 2.34 & 229.08 & 3.17 & 0.99 & 77.82 & 25.72 \\
\midrule
\multirow{4}{*}{0.25} & 0.2 & 2.88 & 29.96 & 1.37 & 1.48 & 22.74 & 6.47 \\
 & 0.4 & 2.87 & 85.63 & 2.56 & 1.64 & 43.89 & 11.23 \\
 & 0.6 & 2.71 & 181.79 & 3.79 & 1.16 & 77.77 & 30.26 \\
 & 0.8 & 2.55 & 403.55 & 3.60 & 0.76 & 35.67 & 42.75 \\
\midrule
\multirow{4}{*}{0.3} & 0.2 & 3.07 & 67.85 & 1.81 & 1.88 & 39.38 & 7.09 \\
 & 0.4 & 2.92 & 189.36 & 3.37 & 1.45 & 72.40 & 19.97 \\
 & 0.6 & 3.14 & 315.37 & 3.84 & 1.30 & 100.67 & 35.54 \\
 & 0.8 & 2.57 & 186.29 & 5.57 & 0.57 & 35.42 & 53.85 \\
\bottomrule
\end{tabular}}
{}
\end{table}

\section{Extension to Heterogeneous Customers: The Mixed Logit-MP  Model}\label{sec:mix}
Customer heterogeneity is a central consideration in choice modeling, as numerous studies document significant variation in preferences across individuals~\citep{mcfadden2000mixed}. A standard approach to capturing such heterogeneity is to use a mixture of choice models. For instance, the widely used mixed MNL model assumes that the customer population consists of multiple segments, each characterized by its own set of MNL parameters. Extending this idea to the multi-purchase setting naturally yields a mixture of Logit-MP models, in which each segment follows~\eqref{eq:choice} with segment-specific parameters. 

In this section, we consider the assortment optimization problem under the mixture of Logit-MP models with a known customer type distribution, as well as its robust counterpart, where the customer type distribution is assumed to lie within a prescribed uncertainty set. Leveraging the perspective formulations developed in Theorem~\ref{theorem:main}, we derive MIP formulations for both problems and evaluate their computational performance.

\subsection{Model Extensions}
We assume that the market consists of $K$ types of customers. Each type of customer is characterized by a hypergraph $\G^k = (V,\E^k)$ and a vector of attraction values $\v^k$. Given an assortment $S \subseteq V$, the type $k$ customer selects bundle $e$ with probability:
\[
    P^k(e,S) = \frac{\exp(u^k_e) \ind(e\subseteq S)}{1+ \sum_{c\in 2^S\cap \E^k}\exp(u^k_c)}. 
\]
For each $k \in [K]$, let $\lambda_k$ denote the proportion of the type $k$ customers. The vector $\boldsymbol{\lambda} = (\lambda_1, \ldots, \lambda_K)$ represents the customer type distribution and must lie in the standard simplex $\Delta^K:= \bigl\{\boldsymbol{\lambda} \bigm| \sum_{k\in [K]} \lambda_k=1,\ \lambda_k \geq 0 \for k \in [K] \bigr\}$. Under this heterogeneous model, the firm seeks to solve the following assortment optimization problem:
\begin{equation}\label{eq:ao-mix}
        \max_S \biggl\{ \sum_{k \in [K]} \sum_{e \in \E} r_e \lambda_k  P^k(e,S) \biggm| S \in \mathcal{X} \biggr\}.\tag{\textsc{AOpt-mix}}
\end{equation}

Incorporating customer heterogeneity introduces significant challenges for assortment optimization. First, it substantially increases computational complexity. Even in the single-purchase setting, the assortment optimization problem under a mixture of two MNL models is already NP-hard \citep{rusmevichientong2014assortment}. Moreover, it is not clear whether existing approximation algorithms developed for the homogeneous case \citep{chen2022assortment, jasin2024assortment} can be extended to the heterogeneous setting. 

Second, heterogeneity raises robustness concerns when the type distribution is estimated from data and subject to misspecification. In practice, segment proportions are typically inferred from limited data and may be misspecified, which can significantly affect optimal assortment decisions~\citep{bertsimas2017robust}. Therefore, it is important to develop optimization models that account for heterogeneity while remaining robust to distributional ambiguity. Following~\cite{bertsimas2017robust}, we consider a polyhedral uncertainty set $\mathcal{L}$ for the type distribution, that is,  \[
\mathcal{L}:=\{\boldsymbol{\lambda} \mid \BFB \boldsymbol{\lambda} \geq \BFd \},
\]
where $\BFB \in \bbmr^{m\times k}$ and $\BFd\in \bbmr^{m}$.  The firm therefore solves the following robust counterpart of~\eqref{eq:ao-mix}
\begin{equation}\label{eq:ao-robust}
            \max_S \min_{\boldsymbol{\lambda} \in \mathcal{L}} \biggl\{ \sum_{k \in [K]} \sum_{e \in \E} r_e \lambda_k  P^k(e,S) \biggm| S \in \mathcal{X} \biggr\}. \tag{\textsc{AOpt-robust}}
\end{equation}

The perspective formulation developed in Theorem~\ref{theorem:main} enables us to globally solve both problems. For a given hypergraph $\G$ and a vector of attraction values $\v$, let $\mathcal{S}(\G,\v)$ denote the perspective formulation defined as in~\eqref{eq:pers:oracle}. Using this, we obtain the following MIP formulation for~\eqref{eq:ao-mix}:
\begin{equation}\label{eq:mix:model}
        \max\limits_{\boldsymbol{\rho},\x,\boldsymbol{\y}} \biggl\{  \sum_{k\in [K]} \sum_{e\in \E^k} r_e   \lambda_k v_e^k  y_e^k \biggm| \x\in \mathcal{X}, (\rho^k, \y^k, \x) \in \mathcal{S}(\G^k,\v^k) \, \for k \in [K]   \biggr\}. \tag{\textsc{Pers-mix}}  
\end{equation}  
Similarly, an MIP formulation for the robust problem~\eqref{eq:ao-robust} is given by:
\begin{equation}\label{eq:robust_mix:model}
        \max \limits_{\x,\boldsymbol{\rho},\y, \boldsymbol{z}} \biggl\{  \BFd^{\top} \boldsymbol{z} \biggm|\begin{array}{lc}
             \boldsymbol{z}^{\top} B_k  =  \sum_{e\in \E^k} r_e  v_e^k  y_e^k,(\rho^k, \y^k, \x)\in \mathcal{S}(\G^k,\v^k)& \for k \in [K] \\
              z\in \bbmr_+^m, \x\in \mathcal{X} & 
        \end{array} \biggr\}.  \tag{\textsc{Pers-robust}} 
\end{equation}
\begin{corollary}\label{coro:mix}
Formulations~\eqref{eq:mix:model} and~\eqref{eq:robust_mix:model} solve problems~\eqref{eq:ao-mix} and~\eqref{eq:ao-robust}, respectively.
\end{corollary}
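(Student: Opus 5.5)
The plan is to reduce both claims to Theorem~\ref{theorem:main} applied segment by segment, and then, for the robust problem, to dualize the inner minimization.

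\textbf{Step 1: the per-segment formulations are exact.} Every $\mathcal{S}(\G^k,\v^k)$ is built with an exact oracle; at minimum~\eqref{eq:RMC} is always included, so Proposition~\ref{prop:our:exact} applies. Hence, for any binary $\x$, the constraints $(\rho^k,\y^k,\x)\in\mathcal{S}(\G^k,\v^k)$ force
\[
\rho^k=\frac{1}{1+\sum_{c\in\E^k}v^k_c\prod_{i\in c}x_i},\qquad y^k_e=\rho^k\prod_{i\in e}x_i .
\]
Two points need care here, and I expect them to be the only real obstacle. First, I must check that the exactness in Theorem~\ref{theorem:main} is genuinely a statement of the form ``for each fixed binary $\x$, the feasible $(\rho,\y)$ is unique and equals the logit value.'' It should not merely be a statement about the projection onto $(\rho,\y)$. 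The linking constraints~\eqref{eq:pers:oracle-4}--\eqref{eq:pers:oracle-5} are what tie $\x$ to $(\rho,\y)$, so I would revisit that argument: with $x_i\in\{0,1\}$, these constraints force $y_i=\rho x_i$. The oracle then forces $y_e=\rho\prod_{i\in e}x_i$, and~\eqref{eq:pers:oracle-2} pins down $\rho$. Second, because the segments share the same $\x$, this fibre-wise exactness is exactly what makes the intersection over $k$ correct.

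\textbf{Step 2: the mixture problem~\eqref{eq:mix:model}.} Write $S=\{i:x_i=1\}$, so that $\x\in\mathcal{X}$ means $S\in\mathcal{X}$. By Step 1 we get $v^k_ey^k_e=P^k(e,S)$ for every $k$ and $e$. The objective of~\eqref{eq:mix:model} therefore equals the objective of~\eqref{eq:ao-mix} at $S$. Conversely, every $S\in\mathcal{X}$ lifts to a feasible point of~\eqref{eq:mix:model}. The feasible sets thus correspond bijectively, objective values agree, and the optimal values and optimizers coincide.

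\textbf{Step 3: the robust problem~\eqref{eq:robust_mix:model}.} Fix $S$ and set $R_k(S)=\sum_{e}r_eP^k(e,S)$. The inner problem $\min\{\sum_k\lambda_kR_k(S)\mid \BFB\boldsymbol{\lambda}\ge\BFd\}$ is an LP in the free variable $\boldsymbol{\lambda}$. Its dual is
\[
\max\{\BFd^\top\boldsymbol{z}\mid \boldsymbol{z}^\top B_k=R_k(S)\ \forall k,\ \boldsymbol{z}\ge 0\},
\]
where $B_k$ is the $k$th column of $\BFB$. I assume $\mathcal{L}$ is nonempty and the inner minimum is finite, which holds since $\mathcal{L}$ should be bounded, for example $\mathcal{L}\subseteq\Delta^K$. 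Strong duality then makes the two values equal. By Step 1 we can substitute $R_k(S)=\sum_e r_ev^k_ey^k_e$. Merging the outer max over $\x$ with the dual max over $\boldsymbol{z}$ yields exactly~\eqref{eq:robust_mix:model}, so the two problems have equal optimal values and the same optimal $\x$.
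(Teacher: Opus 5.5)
Your proposal is correct and follows the same route as the paper: apply Theorem~\ref{theorem:main} (via Proposition~\ref{prop:our:exact}) to each segment separately, and couple the segments only through the shared binary $\x$. Your version is more complete than the paper's short argument in two respects: you note that the coupling needs exactness for each fixed binary $\x$ (which the proof of Theorem~\ref{theorem:main} does give), and you supply the LP-duality step over $\mathcal{L}$ that produces the $\BFd^{\top}\boldsymbol{z}$ objective of \eqref{eq:robust_mix:model}, which the paper leaves implicit.
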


\subsection{Computational Performance}
The robust formulation is the most general and computationally demanding variant considered in this paper. We therefore evaluate the computational performance of the perspective formulation for the robust mixed problem~\eqref{eq:robust_mix:model} using synthetic instances. Specifically, we consider $K \in \{2,4\}$ customer types and follow the parameter generation procedure described in Section~\ref{sec:single:numerical}. 
When generating hyperedges, we create a hyperedge $e$ if $0.4 p_e^{k} + 0.6 p_e^{0} \le \theta,$ which ensures that similarities across different customer types arise from the products' intrinsic attributes. 
Here, $p_e^{k} \sim \mathcal{U}(0,1)$ for $k \in \{ 0,1,\ldots,K\}$ and $e \in \E_d$. The uncertainty set $\mathcal{L}$ is constructed as follows:
\[
\mathcal{L} = \left\{\boldsymbol{\lambda}\:\middle |\: \begin{array}{l}
   \lambda_k\in [L_k, U_k]  \text{ for }k\in [K],\  
    \sum_{k\in [K]}|\lambda_k - \hat{\lambda}_k|\leq \Gamma,\ \boldsymbol{\lambda}\in \Delta^K
\end{array} \right\}.
\]
Each element of $\hat{\boldsymbol{\lambda}}$ is independently drawn from the uniform distribution 
$\mathcal{U}(0,1)$, after which the vector is normalized so that it lies in the probability simplex. 
The bounds are defined as $L_k = 0.95\,\hat{\lambda}_k$ and $U_k = 1.05\,\hat{\lambda}_k$, and the 
budget parameter is set to $\Gamma = 0.1$. Similarly, we define the operational constraint as
$
\mathcal{X} = \left\{\x\in \{0, 1\}^{|V|} \,\middle|\, \sum_{i \in V} x_i \leq 0.2|V| \right\}.
$

We vary the number of products, with $N \in \{70,80,90\}$ for pure intra-category bundles and $N \in \{40,50,60\}$ for cross-category bundles. The computational results are reported in Tables~\ref{tab:mix_1} and~\ref{tab:mix_2}, respectively. As the number of customer types increases, all performance metrics deteriorate. Specifically, solution time, separation time, root gaps, and the number of branch-and-bound nodes grow substantially, reflecting the added complexity of the robust model. Nevertheless, most instances are solved to optimality within the time limit. This observation is consistent with earlier findings --- running-intersection inequalities play a critical role, reducing the number of explored nodes and improving overall efficiency.

\begin{table}[H]
\TABLE 
{{Computation results for pure intra-category bundles with size less than three (sparsity level $\theta= 0.4$)}\label{tab:mix_1}}
{\begin{tabular}{cc rrrrr rrrrr}
\toprule
\multicolumn{2}{c}{Instance}
& \multicolumn{5}{c}{Recursive McCormick} 
& \multicolumn{5}{c}{RMC\& Running-Intersection} \\
\cmidrule(r){1-2} \cmidrule(lr){3-7} \cmidrule(l){8-12}
$N$ & $K$ & \#sol & Time(s) & Gap(\%) & RGap(\%) & \#node & \#sol & Time(s) & Gap(\%) & RGap(\%) & \#node \\
\midrule
\multirow{2}{*}{70} & 2 & 10 & 21.2 (2.1) & 0.0 & 3.3 & 10401 & 10 & 17.5 (4.0) & 0.0 & 2.2 & 9568 \\
&4 & 10 & 172.0 (6.3) & 0.0 & 4.8 & 12602 & 10 & 138.4 (11.9) & 0.0 & 3.6 & 9667 \\
\midrule
\multirow{2}{*}{80} & 2 & 10 & 22.3 (3.6) & 0.0 & 2.9 & 4278 & 10 & 15.5 (6.1) & 0.0 & 2.0 & 2916 \\
&4 & 8 & 478.8 (11.6) & 2.0 & 5.7 & 40781 & 8 & 370.7 (23.0) & 1.6 & 4.0 & 34476 \\
\midrule
\multirow{2}{*}{90} & 2 & 10 & 43.4 (6.1) & 0.0 & 2.4 & 10078 & 10 & 28.8 (10.5) & 0.0 & 1.4 & 4106 \\
&4 & 6 & 845.8 (21.4) & 2.6 & 6.4 & 36458 & 6 & 559.5 (39.0) & 1.8 & 4.9 & 23497 \\
\bottomrule
\end{tabular}}
{}
\end{table}

\begin{table}[H]
\TABLE 
{{Computation results for pure cross-category bundles with size less than three (sparsity level $\theta= 0.4$)}\label{tab:mix_2}}
{\begin{tabular}{cc rrrrr rrrrr}
\toprule
\multicolumn{2}{c}{Instance}
& \multicolumn{5}{c}{Recursive McCormick} 
& \multicolumn{5}{c}{RMC\& Running-Intersection} \\
\cmidrule(r){1-2} \cmidrule(lr){3-7} \cmidrule(l){8-12}
$N$ & $K$ & \#sol & Time(s) & Gap(\%) & RGap(\%) & \#node & \#sol & Time(s) & Gap(\%) & RGap(\%) & \#node \\
\midrule

\multirow{2}{*}{40} 
& 2 & 10 & 8.6 (1.2) & 0.0 & 3.3  & 4397 & 10 & 6.5 (2.7) & 0.0 & 2.2 & 3385 \\
& 4 & 10 & 104.6 (3.9) & 0.0 & 5.0  & 12023 & 10 & 61.0 (7.8) & 0.0 & 3.6 & 8180 \\
\midrule
\multirow{2}{*}{50} 
& 2 & 10 & 36.4 (4.2) & 0.0 & 3.5 & 6779 & 10 & 18.6 (7.9) & 0.0 & 2.2 & 3510 \\
& 4 & 10 & 423.8 (13.1) & 0.0 & 5.9  & 15817 & 10 & 333.0 (26.0) & 0.0 & 4.5 & 12050 \\
\midrule
\multirow{2}{*}{60} 
& 2 & 10 & 222.0 (13.0) & 0.0 & 3.7 & 16710 & 10 & 120.8 (23.9) & 0.0 & 2.1 & 8818 \\
& 4 & 8 & 1903.4 (43.2) & 1.0 & 5.8 & 27345 & 10 & 1812.4 (76.3) & 0.0 & 4.2 & 27289 \\
\bottomrule
\end{tabular}}
{}
\end{table}

\section{Conclusion}\label{sec:conclude}
This paper develops exact MIP formulations for logit-based multi-purchase choice models, which are empirically well supported but challenging to incorporate into optimization frameworks due to product interactions and fractional choice probabilities. To address these challenges, we propose a unified hypergraph representation and derive strong formulations by integrating convexification techniques from multilinear optimization and fractional programming. Our formulations preserve the strength of the underlying polyhedral structure, yielding significantly tighter LP relaxations and substantial computational improvements over standard approaches.

Several directions for future research remain. First, the proposed exact optimization framework can be naturally extended to other complex decision problems sharing similar combinatorial features, such as optimal bundle design and joint pricing under multi-purchase behavior. Second, transitioning from optimization to empirical estimation, we currently adopt a heuristic method to construct the consideration set (i.e., the multi-purchase hypergraph). Designing scalable estimation procedures equipped with statistical guarantees is a non-trivial challenge in discrete choice studies, which needs a dedicated investigation.

\clearpage
 
\bibliographystyle{informs2014_r2} 
\bibliography{paper} 

%
\clearpage
%
%

\clearpage

\ECSwitch

\ECDisclaimer

\ECHead{Additional Discussions, Implementation Details, and Missing Proofs}

\setcounter{section}{8} 
\renewcommand{\thesection}{EC.\the\numexpr\value{section}-8\relax} 


\setcounter{table}{8} 
\setcounter{figure}{8} 
\renewcommand{\thetable}{EC.\the\numexpr\value{table}-8\relax} 
\renewcommand{\thefigure}{EC.\the\numexpr\value{figure}-8\relax} 

\section{Proofs} 

\subsection{Proof of Theorem~\ref{theorem:main}}
\begin{repeattheorem} 
For any hypergraph $\G$, the perspective formulation~\eqref{eq:pers:oracle} is an exact (resp. locally sharp) MIP formulation for $\C_{\G}$ if the relaxation oracle $\R$ is exact (resp. integral).  
\end{repeattheorem} 

\noindent \textbf{Proof.}  
Let $\mathcal{F}_{\text{pers}}$ be a polyhedron defined as the linear relaxation of~\eqref{eq:pers:oracle}. Our goal is to formally establish the relationship between $\C_{\G}$ defined in~\eqref{eq:cg:set} and the projection $\proj_{(\rho,\y)}(\mathcal{F}_{\text{pers}})$.

\vspace{0.5em}
\noindent\textbf{Case 1: Exactness.} We first show that $\C_{\G} = \proj_{(\rho,\y)}(\mathcal{F}_{\text{pers}} \cap \{\x: \x\in \{0,1\}^{|V|}\})$.

\textbf{(Part 1: $\C_{\G} \subseteq \proj_{(\rho,\y)}(\mathcal{F}_{\text{pers}})$)}
Let $(\rho,\y)\in \C_{\G}$. By the definition of choice probabilities, there exists a binary assortment $\x \in \{0,1\}^{|V|}$ such that 
\[
\rho+\sum_{e\in\E}v_e y_e
= \frac{1}{1+\sum_{e\in\E}v_e\prod_{j\in e}x_j}
  + \sum_{e\in\E}v_e\frac{\prod_{j\in e}x_j}{1+\sum_{e\in\E}v_e\prod_{j\in e}x_j}
=1,
\]
which precisely satisfies the probability constraint~\eqref{eq:pers:oracle-2}.
Since $v_e = \exp(u_e) > 0$ for all $e\in\E$, it strictly follows that $\rho\in(0,1]$. Consequently, we have
$y_e / \rho  = \prod_{j\in e}x_j$ for all $e\in\E$. Because the oracle $\R$ is exact for $\mathcal{M}_{\G}$, the vector $\z$ with $z_e = y_e/\rho$ satisfies constraint~\eqref{eq:pers:oracle-1}. 

Next, we verify constraints~\eqref{eq:pers:oracle-4} and~\eqref{eq:pers:oracle-5}. For any $i \in V$, let $L_i(\y)$ and $U_i(\y)$ denote the right-hand sides of~\eqref{eq:pers:oracle-4} and~\eqref{eq:pers:oracle-5}, respectively. Noting that $y_i = \rho x_i$ and $y_e = \rho \prod_{j\in e}x_j$, we evaluate these bounds based on the binary value of $x_i$:
\begin{itemize}
    \item If $x_i = 0$: Then $y_i = 0$, and for any bundle $e$ containing $i$, $y_e = 0$. For any bundle $e$ not containing $i$, $0\le y_e\le \rho$.  Thus, $\max\{0, y_i + y_e - \rho\} = 0$ and $\min\{y_i, y_e\} = 0$. Both $L_i(\y)$ and $U_i(\y)$ equal $0$, matching $x_i = 0$.
    \item If $x_i = 1$: Then $y_i = \rho$. For any bundle $e$ containing $i$, we have $\max\{0, y_i + y_e - \rho\} = \max\{0, y_e\} = y_e$, and $\min\{y_i, y_e\} = \min\{\rho, y_e\} = y_e$ (since $y_e \le \rho$). In this case, both $L_i(\y)$ and $U_i(\y)$ reduce to $\rho + \sum_{e \in \E \mid i \in e} v_e y_e + \sum_{e \in \E \mid i \notin e} v_e y_e = \rho + \sum_{e \in \E} v_e y_e$, which equals $1$, perfectly matching $x_i = 1$.
\end{itemize}
Since $L_i(\y) = U_i(\y) = x_i$ holds for both binary cases, constraints~\eqref{eq:pers:oracle-4} and~\eqref{eq:pers:oracle-5} are satisfied. Thus, $(\rho,\y,\x) \in \mathcal{F}_{\text{pers}}$.

\textbf{(Part 2: $\proj_{(\rho,\y)}(\mathcal{F}_{\text{pers}}) \subseteq \C_{\G}$)} 
Conversely, suppose $(\rho,\y,\x) \in \mathcal{F}_{\text{pers}}$ with $\x \in \{0,1\}^{|V|}$. Define $\z \in \R$ and hence $y_e = \rho z_e \in [0,1]$. Since $z_e\in [0,1]$ and $\rho \ge 0$, we have $0 \le y_e \le \rho$ for all $e \in \E$. 
Because $\v > \boldsymbol{0}$ and the maximization terms are non-negative, constraint~\eqref{eq:pers:oracle-4} implies:
\begin{equation}\label{eq:mc2}
    x_i \ge y_i + \sum_{e\in \E \mid i\in e} v_e y_e  + \sum_{e\in \E \mid i\notin e} v_e \max\{0,y_i + y_e - \rho \} \ge y_i.
\end{equation}
Simultaneously, using the identity $\rho + \sum_{e\in\E} v_e y_e = 1$, constraint~\eqref{eq:pers:oracle-5} implies:
\begin{equation}\label{eq:mc3}
    x_i \le y_i + \sum_{e\in \E \mid i\in e} v_e y_e  + \sum_{e\in \E \mid i\notin e} v_e y_e = y_i + (1 - \rho).
\end{equation}
Coupling $y_i \le x_i \le y_i + 1 - \rho$ with the binary restriction $x_i \in \{0,1\}$, we can get $y_i = \rho x_i$ for all $i \in V$. Since $\R$ is exact, $\z \in \R$ implies $z_e = \prod_{j \in e} x_j$, giving $y_e = \rho \prod_{j \in e} x_j$. Substituting this into~\eqref{eq:pers:oracle-2} perfectly reconstructs the definition of $\C_{\G}$.

\vspace{0.5em}
\noindent\textbf{Case 2: Local sharpness.} We now show that the $\mathcal{F}_{\text{pers}}$ projects exactly to the convex hull of $\C_{\G}$.

When $\x$ is relaxed to $x_i \in [0,1]$, we can apply Fourier-Motzkin elimination to project out $\x$. For each $x_i$, the system imposes two lower bounds ($0$ and $L_i(\y)$) and two upper bounds ($1$ and $U_i(\y)$). The projection is valid if and only if all lower bounds are less than or equal to all upper bounds. We verify this systematically:
First, $0 \le 1$ holds trivially. Second, since $y_e \ge 0$ and $v_e > 0$, $0 \le U_i(\y)$ is evident. Third, because $y_i, y_e \in [0,\rho]$, we have $\max\{0, y_i + y_e - \rho\} \le y_e$. Thus, $L_i(\y) \le y_i + \sum_{e\in \E} v_e y_e \le \rho + \sum_{e\in \E} v_e y_e = 1$. Finally, to see $L_i(\y) \le U_i(\y)$, we only need to verify that $\max\{0, y_i + y_e - \rho\} \le \min\{y_i, y_e\}$. This holds naturally because $y_i, y_e \le \rho$ implies $y_i + y_e - \rho \le y_i$ and $y_i + y_e - \rho \le y_e$. 
Therefore, projecting out $\x$ imposes \textit{no new constraints} on $(\rho,\y)$.  Define $\z \in \R$ and the projection is simply:
\[
\proj_{(\rho,\y)}(\mathcal{F}_{\text{pers}}) = \bigl\{(\rho,\y) \mid z_e = y_e/\rho, \z \in \R, \rho+\sum_{e\in\E}v_e y_e=1 \bigr\}.
\] 
By Theorem~2 of~\cite{he2024convexification}, we know
\[
\conv(\C_{\G}) = \bigl\{(\rho,\y)\mid (\rho,\y)\in\rho\cdot\conv(F),\ \rho+\sum_{e\in\E}v_e y_e=1,\ \rho\ge0\bigr\},
\]
where $F=\{(1,\z)\mid z_e=\prod_{j\in e}x_j,\ \x\in\{0,1\}^{|V|} , e\in \E\}$.
Since $\R$ is an integral relaxation for $\mathcal{M}_{\G}$, we have $\conv(F)=\{(1,\z)\mid  \z \in \R\}$, which perfectly coincides with constraint~\eqref{eq:pers:oracle-1}.
\hfill\Halmos

\subsection{Proofs of Propositions in Section~\ref{sec:strong:model}}
Before proving these propositions, we first introduce the \textit{standard linear relaxation} of the choice availability set $\M_{\G}$, defined as follows.
\begin{align*}
    &0\le z_e \le z_i \quad &\for i \in e, e\in \E\\
    & z_e \ge \sum_{i\in e} z_i - |e|+1 &\for   e\in \E.
\end{align*}
The standard linear relaxation yields an exact formulation for multilinear sets. As shown in the following lemma, it is inherently implied by the recursive McCormick inequalities.
\begin{lemma}\label{lemma:standard}
    The standard linear relaxation is implied by recursive McCormick inequalities. 
\end{lemma}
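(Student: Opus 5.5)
We argue by structural induction on the binary tree $T(\G)$, proving a slightly stronger claim for every node $t$ of $T(\G)$, not only for the hyperedges in $\E$: the variable $z_t$ satisfies
\[
0 \le z_t \le z_i \quad \text{for all } i \in t, \qquad z_t \ge \sum_{i\in t} z_i - |t| + 1,
\]
where we identify each leaf $\{i\}$ with $z_i$. Since every $e\in\E$ with $|e|\ge 2$ appears as a (root or internal) node of $T(\G)$, and singletons are leaves for which the claim is trivial, the claim at the nodes $e\in\E$ gives the lemma. We also keep in mind the box bounds $z_i\in[0,1]$ inherited from the oracle.

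\textbf{Base case.} For a leaf $t=\{i\}$, the bounds $0\le z_i\le z_i$ are immediate and the lower bound reads $z_i \ge z_i - 1 + 1$, an identity.

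\textbf{Inductive step.} Let $t$ be a non-leaf node with children $t_\ell, t_r$ partitioning $t$, and assume the claim holds for both children. The upper bounds follow by chaining. For $i\in t_\ell$ we have $z_t \le z_{t_\ell} \le z_i$ by the McCormick upper bound and the hypothesis, and symmetrically for $i\in t_r$. Nonnegativity $z_t\ge 0$ is part of the McCormick lower bound $\max\{0,\cdot\}$. For the aggregated lower bound, combine $z_t \ge z_{t_\ell}+z_{t_r}-1$ with the two inductive lower bounds:
\[
z_t \ge \Bigl(\sum_{i\in t_\ell} z_i - |t_\ell| + 1\Bigr) + \Bigl(\sum_{i\in t_r} z_i - |t_r| + 1\Bigr) - 1 = \sum_{i\in t} z_i - |t| + 1,
\]
where the last equality uses $|t_\ell|+|t_r|=|t|$, which holds because $t_\ell$ and $t_r$ partition $t$.

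\textbf{Anticipated subtlety.} The tree may have multiple roots and shared subtrees, as with leaf $\{1\}$ in Figure~\ref{fig:rmc:mvmnl}. This does not affect the argument, because the induction is on the node itself and each node's inequalities depend only on its own descendants. The one point to state explicitly is that \eqref{eq:RMC} contains McCormick inequalities for every non-leaf node, including the auxiliary ones such as $z_{23}$, so the hypothesis is available at every internal level. Otherwise the argument is routine.
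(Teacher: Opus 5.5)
Your proof is correct and follows the same idea as the paper's: induct along the recursive decomposition, add the McCormick lower bound $z_t \ge z_{t_\ell} + z_{t_r} - 1$ to the children's aggregated lower bounds, and chain $z_t \le z_{t_\ell} \le z_i$ for the upper bounds. Your structural induction over arbitrary partitions is slightly more general and more careful than the paper's version, which peels off one element at a time via $z_e \ge z_j + z_{e\setminus\{j\}} - 1$ (matching the left-deep trees of Algorithm~\ref{alg:rmc}) and simply asserts that the upper bounds $z_e \le z_i$ are imposed directly, without the chaining step you make explicit.
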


\noindent \textbf{Proof.}
The upper bound $0 \le z_e \le z_i$ for any $i\in e$ is directly imposed by the recursive McCormick inequalities. For the lower bound, we proceed by a straightforward induction on the bundle size $|e|$. Recursively applying the McCormick lower bound inequality $z_e \ge z_j + z_{e\setminus\{j\}} - 1$ over all elements $j \in e$ systematically accumulates the terms, yielding exactly $z_e \ge \sum_{i\in e}z_i - |e| + 1$. \hfill\Halmos

\vspace{1em}

\begin{repeatproposition}
   If the oracle $\R$ is given by \eqref{eq:RMC} then the perspective formulation~\eqref{eq:pers:oracle} is an MIP formulation of $\C_{\G}$.
\end{repeatproposition}
\textbf{Proof.} When the decision variables are restricted to binary values ($\x \in \{0,1\}^{|V|}$), the recursive McCormick inequalities exactly enforce the multilinear relationship $z_e = \prod_{i\in e} x_i$. Therefore, because the oracle $\R$ perfectly captures $\M_{\G}$ at all integer points, $\R$ is an exact MIP formulation of $\M_{\G}$. The exactness of the perspective formulation then follows directly from Theorem~\ref{theorem:main}. \hfill \Halmos

\begin{repeatproposition}  
  If the oracle $\R$ is given by \eqref{eq:RMC} and \eqref{eq:odd}, then the perspective formulation~\eqref{eq:pers:oracle} is an MIP formulation of $\C_{\G}$ for any multi-purchase hypergraph $\G$. If $\G$ is a series-parallel graph, then~\eqref{eq:pers:oracle} is also a locally sharp formulation of $\C_{\G}$.
\end{repeatproposition}
\textbf{Proof.} By construction, inequalities~\eqref{eq:odd} are valid for the multilinear
set $\M_{\G}$~\citep{padberg1989boolean}. Hence, by Theorem~\ref{theorem:main},~\eqref{eq:pers:oracle} is an MIP formulation of $\C_{\G}$. 

Theorem~10 in \cite{padberg1989boolean} shows that if $\G$ is a series-parallel graph, the convex hull of $\M_{\G}$ is characterized by two types of inequalities. The first class comprises the trivial inequalities (i.e., the standard linear relaxation), which are implied by \eqref{eq:RMC} via Lemma~\ref{lemma:standard}. The second class consists of the nontrivial odd-cycle inequalities~\eqref{eq:odd}. The relaxation oracle by \eqref{eq:RMC} and \eqref{eq:odd} describes the convex hull of $\M_{\G}$, and thus  is integral when $\G$ is a series-parallel graph. The local sharpness of \eqref{eq:pers:oracle} then follows immediately from Theorem~\ref{theorem:main}. \hfill \Halmos

\begin{repeatproposition} 
     If the oracle $\R$ is given by \eqref{eq:RMC} and \eqref{eq:running}, then the perspective formulation~\eqref{eq:pers:oracle} is an MIP formulation of $\C_{\G}$ for any multi-purchase hypergraph $\G$. If $\G$ is a kite-free $\beta$-acyclic hypergraph, then~\eqref{eq:pers:oracle} is also a locally sharp formulation of $\C_{\G}$.
\end{repeatproposition}  
\textbf{Proof.} Proposition~10 in \cite{del2021running} establishes that the running-intersection inequalities~\eqref{eq:running} are valid for the multilinear set $\M_{\G}$. Thus, by Theorem~\ref{theorem:main},~\eqref{eq:pers:oracle} is an MIP formulation of $\C_{\G}$. 
 
Theorem~3 in \cite{del2021running} shows that if $\G$ is a kite-free $\beta$-acyclic hypergraph, its convex hull is characterized by two types of inequalities: the trivial inequalities (implied by \eqref{eq:RMC} via Lemma~\ref{lemma:standard}) and the nontrivial running-intersection inequalities~\eqref{eq:running}. Consequently, \eqref{eq:RMC} and \eqref{eq:running} jointly constitute a locally sharp formulation of $\M_{\G}$. The local sharpness of the perspective formulation follows directly from Theorem~\ref{theorem:main}.\hfill \Halmos

\subsection{Proof of Proposition~\ref{prop:bigm:exact}}
\begin{repeatproposition}
  If the relaxation oracle $\R$ is exact, then the Big-M formulation~\eqref{eq:bigm} is an MIP formulation of $\C_{\G}$.  
\end{repeatproposition}
\textbf{Proof.} We first show $\proj_{(\rho,\y)}\eqref{eq:bigm} \subseteq \eqref{eq:cg:set}$. Let $(\rho, \y, \x, \z)$ be a feasible solution in $\eqref{eq:bigm}$. The binary restriction on $\x$ combined with the McCormick inequalities~\eqref{eq:bigm-1} enforces $y_e = \rho z_e$ for each $e\in \E$. Because $\z \in \R$ and $\x$ is binary, the exactness of the oracle $\R$ guarantees $z_e = \prod_{i\in e} x_i$. Substituting these relations into the probability constraint $\rho + \sum_{e\in \E} v_e y_e = 1$ of \eqref{eq:bigm}, we obtain:
\[ \rho + \sum_{e\in \E} v_e \bigl(\rho \cdot \prod_{i\in e} x_i \bigr)= 1.\]
Since $\v$ and $\x$ are non-negative, the term $1+\sum_{e\in \E} v_e \prod_{i\in e} x_i \ge 1 > 0$. This ensures the denominator is strictly positive, allowing us to safely factor out $\rho$ and rearrange the equation as:
\[\rho = \frac{1}{1+\sum_{e\in \E} v_e \prod_{i\in e} x_i} , y_e =\frac{\Pi_{i\in e} x_i}{1+\sum_{e\in \E} v_e \Pi_{i\in e} x_i} \for e \in \E, \]
which is exactly the definition of $\C_{\G}$.

Conversely, we prove $\C_{\G} \subseteq \proj_{(\rho,\y)}\eqref{eq:bigm}$. Suppose $(\rho, \y)$ is a feasible solution of~\eqref{eq:cg:set}. By the definition of $\C_{\G}$, there exists a binary assortment vector $\x \in \{0,1\}^{|V|}$ generating these probabilities. Consequently, we have:
\[ \rho + \sum_{e\in \E} v_e y_e =  \frac{1}{1+\sum_{e\in \E} v_e \Pi_{i\in e} x_i } + \sum_{e\in \E} v_e\frac{\Pi_{i\in e} x_i}{1+\sum_{e\in \E} v_e \Pi_{i\in e} x_i } = 1.\]  
Note that all attraction values are positive, i.e., $v_e > 0 \for e\in \E$, and hence $\rho\in (0,1]$. Let $z_e = \prod_{i\in e} x_i$ for each $e\in \E$. Because $\x$ is binary and the oracle $\R$ is exact, we naturally obtain $\z \in \R$, satisfying constraint~\eqref{eq:bigm:0}. By the definition of $\y$, we have $y_e = \rho z_e$ for all $e\in \E$. Furthermore, $\x$ explicitly enforces the integrality condition $\z \in \{0,1\}^{|\E|}$. Taken together, these two conditions ensure that the Big-M relaxation~\eqref{eq:bigm-1} is satisfied. Therefore, $(\rho, \y, \x,\z) \in \eqref{eq:bigm}$, concluding the proof. \hfill \Halmos

\subsection{Proof of Theorem~\ref{thm:compare}}
\begin{repeattheorem}
For any relaxation oracle $\R$, $\R_{\text{pers}} \subseteq \R_{\text{Big-M}}$.   
\end{repeattheorem}
  \textbf{Proof.} 
Suppose $(\rho,\y) \in \R_{\text{pers}}$. By definition, there exists an assortment decision $\x$ such that $(\rho,\y,\x) $ belongs to the linear relaxation of \eqref{eq:pers:oracle}. Coupling the probability identity $\rho +\sum_{e \in \E} v_e y_e =1 $ with the conditions $0 \le y_e \le \rho$ for all $e\in \E$ and $\v > \boldsymbol{0}$, it strictly follows that $\rho \in (0,1]$. Because $\rho$ is strictly positive, we can well-define $z_e := y_e/\rho$ for each $e\in \E \setminus V$. 

Next, we demonstrate that $(\rho,\y,\z,\x)$ satisfies all constraints of the linear relaxation of \eqref{eq:bigm}, thereby establishing $(\rho,\y) \in \R_{\text{Big-M}}.$ the inherent condition $0 \le y_e \le \rho$ immediately guarantees that $z_e \in [0,1]$. Furthermore, since $(\rho,\y,\x)$ satisfies the scaled constraints of the perspective formulation $\R_{\text{pers}}$, unscaling these constraints by the positive scalar $\rho$ ensures that $\z \in \R$, which naturally satisfies constraint~\eqref{eq:bigm:0}.

We now verify that $(\rho,\y,\z,\x)$ satisfies the Big-M relaxation constraints for $y_e = \rho z_e$ by substituting the natural domain bounds $\rho_L=0$ and $\rho_U=1$:
\begin{itemize}
    \item Substituting $\rho_L=0$, the non-negativity $y_e \ge 0$ directly yields $y_e \ge \rho_L z_e$, while $y_e \le \rho$ equivalently yields $y_e \le \rho_L z_e + \rho - \rho_L$.
    \item Substituting $\rho_U=1$, since $\rho \le 1$ and $z_e \ge 0$, we safely obtain $y_e = \rho z_e \le z_e = \rho_U z_e$.
    \item Finally, because both $\rho \le 1$ and $z_e \le 1$, the product $(1-\rho)(1-z_e)$ must be non-negative. Expanding $(1-\rho)(1-z_e) \ge 0$ yields $\rho z_e \ge z_e + \rho - 1$, which exactly corresponds to $y_e \ge \rho_U z_e + \rho - \rho_U$.
\end{itemize}

Since all constraints of the linear relaxation of \eqref{eq:bigm} are rigorously satisfied, we conclude that $(\rho,\y) \in \R_{\text{Big-M}}$.

\hfill\Halmos

\subsection{Proof of Theorem~\ref{them:apx}}
\begin{repeattheorem}
     The assortment optimization problem~\eqref{eq:assortment-single} is NP-hard to approximate within factor $O(\vert V \vert^{1-\epsilon})$ for any fixed $\epsilon \in (0,1)$ even when there is no constraint.
\end{repeattheorem}

Our hardness result is established via a reduction from the Maximum Independent Set (MIS) problem. Given an undirected simple graph $\G(V,\E)$, the MIS problem seeks a maximum-cardinality subset of vertices such that no two nodes are adjacent. It is a well-known NP-hard problem whose approximation is difficult. Specifically, finding an approximation solution $S\subseteq V$ such that $\frac{|S^*|}{|S|} \le O(|V|^{1-\epsilon})$ for any fixed $\epsilon \in (0,1)$ is NP-hard unless P = NP, where $S^*$ is the optimal maximum independent set. This result is formally stated in the following lemma. 
\begin{lemma}\label{lemma:inapprox}
    \citep{haastad1999clique} For any $\epsilon \in (0,1)$, it is NP-hard to approximate the Maximum Independent Set problem to within $N^{1-\epsilon}$, where $N$ is the number of nodes.
\end{lemma}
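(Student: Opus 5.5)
This lemma is a cited result, so the plan is not to reprove it from scratch. I would reduce it to the known hardness of approximating \textsc{Max-Clique} through graph complementation, and identify exactly which form of that result is needed.

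\textbf{Step 1 (complementation).} Given a simple graph $\G(V,\E)$, form its complement $\bar{\G}=(V,\bar{\E})$, where $\bar{\E}=\{\{i,j\}\mid i\neq j,\ \{i,j\}\notin\E\}$. This graph can be built in $O(|V|^2)$ time. A set $S\subseteq V$ is independent in $\G$ if and only if it is a clique in $\bar{\G}$. Both graphs have the same $N=|V|$ nodes, so the optimal values coincide. Any algorithm that returns an independent set within factor $N^{1-\epsilon}$ of optimal in $\G$ therefore returns a clique within factor $N^{1-\epsilon}$ of optimal in $\bar{\G}$. This is an approximation-preserving reduction with no loss in the ratio or in the parameter $N$.

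\textbf{Step 2 (hardness of clique).} I would then invoke the clique inapproximability theorem of \cite{haastad1999clique}. That theorem builds a PCP for NP with amortized free bit complexity arbitrarily close to $0$. It then applies the FGLSS graph construction, in which nodes are accepting verifier views and edges join consistent views. The resulting graphs separate clique number $\geq N^{1-\epsilon/2}$ from clique number $\leq N^{\epsilon/2}$, which gives a gap of $N^{1-\epsilon}$. Combining this gap with Step 1 yields the lemma.

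\textbf{Main obstacle.} The delicate point is the complexity class under which the $N^{1-\epsilon}$ bound holds. Håstad's original argument gives $N^{1-\epsilon}$ hardness only under $\mathrm{NP}\not\subseteq\mathrm{ZPP}$, because it uses a randomized gap amplification. His deterministic bound is only $N^{1/2-\epsilon}$. To obtain genuine NP-hardness, as the statement asserts, I would cite the derandomization of that amplification step by disperser/extractor constructions due to Zuckerman (2007), which makes the reduction deterministic and polynomial time.

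\textbf{Role in the surrounding proof.} The reduction to \eqref{eq:assortment-single} only needs a polynomial-time gap-producing reduction. Either version of the clique result suffices if the complexity assumption in Theorem~\ref{them:apx} is adjusted to match. I would commit to the Zuckerman-strengthened form so that Theorem~\ref{them:apx} holds under P $\neq$ NP as stated.
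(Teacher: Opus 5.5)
Your proposal is correct, and it is more careful than the paper, which gives no argument and simply attributes the statement to \cite{haastad1999clique}. The paper's route and yours agree in relying on that literature. Your complementation step is the bridge the citation leaves implicit: H\aa stad's theorem is stated for \textsc{Max-Clique}, and $\alpha(G)=\omega(\bar{G})$ on the same vertex set, so neither the ratio nor the parameter $N$ changes.

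The substantive difference is your treatment of the complexity assumption, and you are right about it. H\aa stad proves the $N^{1-\epsilon}$ bound only under $\mathrm{NP}\neq\mathrm{ZPP}$, because the amplification of his low amortized free-bit PCP samples the verifier's random strings randomly. His deterministic consequence is only $N^{1/2-\epsilon}$. NP-hardness at $N^{1-\epsilon}$ comes from Zuckerman's derandomization via linear-degree extractors (Theory of Computing, 2007). So the lemma as written, with ``NP-hard'' and only the H\aa stad citation, misstates the hypothesis under which the cited result holds.

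There are two ways to fix it. Adding the Zuckerman citation, as you propose, makes Theorem~\ref{them:apx} hold under $\mathrm{P}\neq\mathrm{NP}$ as stated. Keeping only H\aa stad would require stating Theorem~\ref{them:apx} under $\mathrm{NP}\not\subseteq\mathrm{ZPP}$. In either case the reduction in the proof of Theorem~\ref{them:apx} goes through unchanged.
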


\noindent \textbf{Proof.} Given an MIS instance $G=(V, E_{MIS})$ with $|V|=N$, we construct an assortment optimization instance under the Logit-MP model as follows. We define the consideration set as $\E = V \cup E_{MIS}$. For each singleton bundle $i\in V$, we set its utility $v_i = 1/N^2$ and revenue $r_i = N$. For each edge bundle $(i,j) \in E_{MIS}$, we set its utility $v_{ij} = N^3$ and revenue $r_{ij} = 0$. This construction is clearly polynomial in $N$. 

For any offered assortment $S\subseteq V$, let $\beta(S)$ denote the number of edges induced by $S$, i.e.,
\[ \beta(S) = |\{(i,j) \in E_{MIS} \mid i,j\in S\}|. \]
The objective function of the constructed assortment optimization problem (AOP) equals:
\[ AOP(S) = \frac{N \cdot |S| \cdot (1/N^2)}{1 + |S|\cdot (1/N^2) + N^3 \cdot \beta(S)} = \frac{|S|/N}{1 + |S|/N^2 + N^3\beta(S)}, \]
where the zero-revenue edge bundles contribute exclusively to the denominator, acting as severe penalties for selecting adjacent nodes.

Next, we establish the bounds for the objective $AOP(S)$. Suppose $S\subseteq V$ is a feasible assortment. If $S$ is an independent set, then by definition $\beta(S) = 0$, which simplifies the objective to:
\begin{equation}\label{eq:apx:obj}
    AOP(S) = \frac{|S|/N}{1 + |S|/N^2}.
\end{equation} 
Since $|S| \le N$, we have $0\le |S|/N^2 \le 1/N$. This allows us to construct tight lower and upper bounds for any independent set $S$:
\begin{equation}\label{eq:apx:1}
    AOP(S) = \frac{|S|/N}{1 + |S|/N^2} \ge \frac{|S|/N}{1 + 1/N} = \frac{|S|}{N+1},
\end{equation}
and 
\begin{equation}\label{eq:apx:3}
    AOP(S) = \frac{|S|/N}{1 + |S|/N^2}  \le \frac{|S|}{N}.
\end{equation}
Conversely, if $S$ is \textit{not} an independent set, then $\beta(S) \ge 1$. Consequently, we can bound the objective from above:
\begin{equation}\label{eq:apx:2}
    AOP(S) \le \frac{|S|/N}{N^3\beta(S)} \le \frac{|S|/N}{N^3} \le \frac{1}{N^3},
\end{equation}
where the inequalities follow from the facts that $1+|S|/N^2 \ge 0$, $\beta(S) \ge 1$, and $|S|/N \le 1$.

Based on these bounds, we claim that for any $N \ge 2$, the optimal assortment $S^*_{AOP}$ must be an independent set. For the sake of contradiction, suppose $S^*_{AOP}$ is not an independent set. By \eqref{eq:apx:2}, we have $AOP(S^*_{AOP}) \le 1/N^3$. However, consider a trivial assortment $\{i\}$ consisting of any single product, which is inherently an independent set of size $1$. By \eqref{eq:apx:1}, its expected revenue is $AOP(\{i\}) \ge 1/(N+1)$. Since $1/(N+1) > 1/N^3$ strictly holds for all $N\ge 2$, we obtain $AOP(\{i\}) > AOP(S^*_{AOP})$, which contradicts the optimality of $S^*_{AOP}$. Thus, $S^*_{AOP}$ is guaranteed to be an independent set. 

Furthermore, observe that the function $f(x) = \frac{x/N}{1 + x/N^2}$ is strictly increasing for $x \in[0, N]$. Therefore, restricted to independent sets, $AOP(S)$ is strictly monotonically increasing in the cardinality $|S|$, see~\eqref{eq:apx:obj}. This structural property ensures that the optimal assortment exactly coincides with the maximum independent set, i.e., $S^*_{AOP} = S^*_{MIS}$.

Now, we establish the inapproximability result. We prove that any polynomial-time approximation algorithm for our assortment problem can be converted into an approximation algorithm for MIS, eventually violating Lemma \ref{lemma:inapprox}. Suppose, for contradiction, there exists a polynomial-time algorithm $\mathcal{A}$ that returns a feasible assortment $\hat{S}$ with an approximation ratio $\gamma \ge 2N^{-(1-\epsilon)}$, such that:
\begin{equation}\label{eq:apx:4}
    AOP(\hat{S}) \ge \gamma \cdot AOP(S^*_{AOP}).
\end{equation}
First, we must show that for sufficiently large $N$, $\hat{S}$ is forced to be an independent set. Since $S^*_{AOP}$ is optimal, $|S^*_{AOP}| \ge 1$, which implies $AOP(S^*_{AOP}) \ge 1/(N+1)$. Thus, the algorithm guarantees $AOP(\hat{S}) \ge \frac{\gamma}{N+1} = \frac{2}{N^{1-\epsilon}(N+1)}$. If $\hat{S}$ were not an independent set, we would have $AOP(\hat{S}) \le 1/N^3$. However, for any fixed $\epsilon > 0$ and sufficiently large $N$, $\frac{2}{N^{1-\epsilon}(N+1)} > \frac{1}{N^3}$, leading to a contradiction. Hence, $\hat{S}$ must be an independent set.

Because $\hat{S}$ is an independent set, we can safely apply the upper bound from \eqref{eq:apx:3}, yielding $|\hat{S}| \ge N \cdot AOP(\hat{S})$. We then derive:
\[
    |\hat{S}| \ge N \cdot AOP(\hat{S}) \ge N \cdot \gamma \cdot AOP(S^*_{AOP}) = N \cdot \gamma \cdot AOP(S^*_{MIS}).
\]
Using the lower bound from \eqref{eq:apx:1} for $AOP(S^*_{MIS})$, we obtain:
\[
    |\hat{S}| \ge N \cdot \gamma \cdot \frac{|S^*_{MIS}|}{N+1} \ge \frac{\gamma}{2} \cdot |S^*_{MIS}|,
\]
where the last step uses the fact that $\frac{N}{N+1} \ge \frac{1}{2}$ for all $N \ge 1$. Substituting $\gamma \ge 2N^{-(1-\epsilon)}$, we conclude that $|\hat{S}| \ge N^{-(1-\epsilon)} |S^*_{MIS}|$. This implies we have discovered a polynomial-time algorithm that approximates the MIS problem within a factor of $O(N^{1-\epsilon})$. This directly contradicts Lemma \ref{lemma:inapprox}. Therefore, it is NP-hard to approximate the assortment optimization problem under the general Logit-MP model within a factor of $O(|V|^{1-\epsilon})$. \hfill \Halmos

\subsection{Proofs of Corollaries~\ref{coro:single} and~\ref{coro:mix}}
\begin{repeatcorollary}
The formulation~\eqref{eq:assort} exactly solves the assortment optimization problem~\eqref{eq:assortment-single} when the relaxation oracle is given by~\eqref{eq:RMC}. The LP relaxation of~\eqref{eq:assort} solves~\eqref{eq:assortment-single} when the problem is unconstrained, that is, $\mathcal{X} = \{0,1\}^V$, the multi-purchase hypergraph $\G$ is a series-parallel (resp. kite-free $\beta$-acyclic) graph, and the relaxation oracle is derived by~\eqref{eq:RMC} and~\eqref{eq:odd} (resp.~\eqref{eq:running}). 
\end{repeatcorollary} 

\noindent\textbf{Proof:} For the assortment optimization problem~\eqref{eq:assortment-single}, define assortment decision variables $x_i = \ind(i\in S)$, choice probability variables $y_e = P_{\E}(e,S)$, and no-purchase probability $\rho = 1-\sum_{e\in \E}y_e$. Because $\x \in \{0,1\}^{|V|}$, the bundle inclusion indicator can be exactly captured by the product $\ind(e\subseteq S) = \prod_{i\in e}x_i$. Hence, problem~\eqref{eq:assortment-single} equivalently reduces to:
\begin{equation}
    \max \limits_{\rho,\x,\y} \sum_{e\in \E}r_e v_e y_e \quad \st  (\rho, \y)\in \C_{\G}.
\end{equation}
By Theorem~\ref{theorem:main},~\eqref{eq:pers:oracle} is an exact formulation of $\C_{\G}$. Hence, substituting the non-linear fractional probabilities in $\C_{\G}$ with \eqref{eq:pers:oracle} yields an exact reformulation, which is exactly~\eqref{eq:assort}. 

Furthermore, as established in our previous results, when the multi-purchase hypergraph $\G$ is series-parallel or kite-free $\beta$-acyclic, $\C_{\G}$ admits an exact polyhedral characterization via the linear inequalities in~\eqref{eq:RMC} and~\eqref{eq:odd} (resp.~\eqref{eq:running}). Consequently, the convex hull of the choice probabilities is explicitly captured without requiring auxiliary integer variables, meaning the LP relaxation of~\eqref{eq:assort} exactly solves the original problem. \hfill \Halmos

\begin{repeatcorollary}
Formulations~\eqref{eq:mix:model} and~\eqref{eq:robust_mix:model} solve problems~\eqref{eq:ao-mix} and~\eqref{eq:ao-robust}, respectively.
\end{repeatcorollary}

\noindent \textbf{Proof:}
Analogous to the single-class setting, we define the class-specific choice probability variables $y^k_e$ and the no-purchase probability $\rho^k$ for each customer class (or scenario) $k$. The proof idea remains identical: the choice probabilities for each class $k$ must validly and independently reside within their respective probability space $\C_{\G^k}$. By replacing the choice probabilities of each class with the continuous variables $(\rho^k, \y^k) \in \C_{\G^k}$, and coupling these distinct probability spaces exclusively through the single, shared discrete assortment decision $\x \in \{0,1\}^{|V|}$, formulations~\eqref{eq:mix:model} and~\eqref{eq:robust_mix:model} directly emerge as exact reformulations of their original counterparts. \hfill \Halmos

\section{Implementation Details}\label{ec:sec:algo}
Implementing the perspective formulation~\eqref{eq:pers:oracle} presents two computational challenges. First,~\eqref{eq:pers:oracle} contains $\mathcal{O}(|V|2^{|\E\setminus V|})$ inequalities for bounding $\x$. Second, the polyhedral relaxation oracle $\R$ may also include a large number of inequalities, particularly when the multi-purchase hypergraph is dense. To address this issue, we design a cutting-plane implementation. Our approach begins with a simplified base formulation containing fewer constraints. We then iteratively employ two separation oracles for the two classes of inequalities, thereby tightening the formulation. Finally, we solve the tightened formulation with binary constraints to obtain an optimal integer solution. We detail the base formulation and separation oracles for constraints~\eqref{eq:pers:oracle-4} and \eqref{eq:pers:oracle-5}, odd-cycle inequalities, and running-intersection inequalities in Section~\ref{sec:base} and present the complete cutting-plane procedure in Section~\ref{sec:cut}. 

\subsection{Base Formulation and Separation Oracles}\label{sec:base}
We construct a base formulation as follows. We first construct the base relaxation oracle $\R$ by associating each bundle $e$ with a left-deep binary tree. We build binary trees via the following Algorithm~\ref{alg:rmc}.
\begin{algorithm}[H]
\caption{Construction of Binary Trees for Recursive McCormick Inequalities}
\label{alg:rmc}
\begin{algorithmic}[1]
\REQUIRE A multi-purchase hypergraph $\G(V,\E)$; an auxiliary-variable registry $\mathcal{W}$  
\ENSURE  Binary trees for each hyperedge $e\in \E$ with $|e|\ge 2$ 
\STATE  Initialize each hyperedge $e = \{i_1, \dots, i_k\}$ with indices sorted
         in ascending order $i_1 <  \cdots < i_k$ 
\STATE Initialize an un-visited set $S = \{e: |e|\ge 2, e\in \E\}$ and a visited set $H\gets \emptyset$
\WHILE{ $S$ is non-empty}
 \STATE Find a root node $e = \argmin_{e\in S} |e|$ and $S \gets S\setminus \{e\}, H \gets H \cup \{e\}$   
\STATE Define $e^{\prime} = \{ i_1, i_2,\dots, i_{|e|-1}\}$ as the left child (the subtree associated with $e^{\prime}$) 
\STATE Define $j= i_{|e|}$ as the right child (the right node corresponding to $x_j$) 
\IF{$|e^\prime|>1$ and $e^\prime \notin H$}
\STATE $S\gets S \cup \{e^{\prime}\}$ and $\mathcal{W}\gets \mathcal{W} \cup \{e^{\prime}\}$
\ENDIF  
\ENDWHILE
\RETURN binary trees and $\mathcal{W}$ 
\end{algorithmic}
\end{algorithm}
The recursive McCormick inequalities are then imposed along the edges of this tree.
We refer to the relaxation oracle with these recursive inequalities as $\R_{\text{RMC}}$.

Regarding constraints~\eqref{eq:pers:oracle-4} and \eqref{eq:pers:oracle-5}, we initially incorporate only the following extreme cases:
\begin{subequations}\label{eq:pers:x:simple}
    \begin{alignat}{3} 
   & x_i \geq  y_i + \sum_{e \in \E | i\in e } y_e    && \quad \for i \in V \label{eq:base1} \\
   & x_i \geq  y_i + \sum_{e \in \E | i\in e } y_e + \sum_{e \in E | i\notin e} v_e  ( y_i +y_e -\rho)  && \quad \for i \in V  \label{eq:base2}\\
    & x_i \le  y_i + \sum_{e \in \E | i\in e } y_e + \sum_{e \in E | i\notin e} v_e  y_i    && \quad \for i \in V \label{eq:base3}\\ 
    & x_i \le  y_i + \sum_{e \in \E | i\in e } y_e + \sum_{e \in E | i\notin e} v_e  y_e    && \quad \for i \in V. \label{eq:base4}
    \end{alignat}
\end{subequations}
Combining these elements yields the following base formulation:
\begin{equation}\label{eq:base}
    \{(\rho, \y, \x) \mid \rho + \sum_{e\in \E} y_e = 1 , \rho \ge 0, \y \in \rho \cdot \R_{\text{RMC}}, \eqref{eq:pers:x:simple}, \x \in \{0,1\}^{|V|}\}. \tag{\texttt{Base}}
\end{equation}
The next proposition establishes the validity of the base formulation.  
\begin{proposition}\label{prop:base}
   \eqref{eq:base} is an MIP formulation for~\eqref{eq:cg:set}.
\end{proposition}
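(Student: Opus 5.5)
The plan is to sandwich \eqref{eq:base} between the perspective formulation~\eqref{eq:pers:oracle} with oracle $\R_{\text{RMC}}$ and $\C_{\G}$. I read the normalization in \eqref{eq:base} and the sums in \eqref{eq:base1} with the weights $v_e$, as in \eqref{eq:pers:oracle-2}--\eqref{eq:pers:oracle-5}. I am assuming the missing $v_e$ there is a typo; this reading is needed for the inclusion argument below.

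\textbf{Inclusion $\C_{\G}\subseteq\proj_{(\rho,\y)}\eqref{eq:base}$.} Every inequality in \eqref{eq:pers:x:simple} is implied by \eqref{eq:pers:oracle-4}--\eqref{eq:pers:oracle-5}, using $y_e\ge 0$ and $v_e>0$:
\begin{itemize}
\item \eqref{eq:base1} follows from $\max\{0,\cdot\}\ge 0$;
\item \eqref{eq:base2} follows from $\max\{0,a\}\ge a$;
\item \eqref{eq:base3} and \eqref{eq:base4} follow from $\min\{y_i,y_e\}\le y_i$ and $\min\{y_i,y_e\}\le y_e$.
\end{itemize}
Hence the feasible set of \eqref{eq:pers:oracle} with $\R=\R_{\text{RMC}}$ is contained in \eqref{eq:base}. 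By Proposition~\ref{prop:our:exact}, the former projects onto $\C_{\G}$, so the inclusion follows. As a sanity check, one can also verify directly that for binary $\x$ with $y_e=\rho\prod_{j\in e}x_j$ all four right-hand sides equal $0$ when $x_i=0$ and lie on the correct side of $1$ when $x_i=1$. This is the same case analysis as in Part~1 of the proof of Theorem~\ref{theorem:main}.

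\textbf{Reverse inclusion.} Take $(\rho,\y,\x)\in\eqref{eq:base}$ with $\x$ binary. From $\y\in\rho\cdot\R_{\text{RMC}}$ we get $0\le y_e\le\rho$. Then \eqref{eq:base1} gives $x_i\ge y_i$. Also \eqref{eq:base4}, combined with the normalization $\rho+\sum_{e}v_ey_e=1$, gives $x_i\le y_i+1-\rho$. These are exactly \eqref{eq:mc2}--\eqref{eq:mc3}. We then argue by cases:
\begin{itemize}
\item If $x_i=0$, then $y_i=0$.
\item If $x_i=1$, then $y_i\ge\rho$, and together with $y_i\le\rho$ this gives $y_i=\rho$.
\end{itemize}
So $y_i=\rho x_i$ for all $i\in V$. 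Next, $\rho>0$ follows from the normalization and $y_e\le\rho$, so we may set $\z=\y/\rho\in\R_{\text{RMC}}$ with $z_i=x_i$ binary. Exactness of \eqref{eq:RMC} (Proposition~\ref{prop:our:exact}) forces $z_e=\prod_{j\in e}x_j$. Substituting into the normalization recovers the fractional form of $\C_{\G}$, as in Part~2 of the proof of Theorem~\ref{theorem:main}.

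The only delicate step is the reverse inclusion. There I must make sure that only the weakest inequalities \eqref{eq:base1} and \eqref{eq:base4} are needed, so the argument never relies on the dropped $\max$/$\min$ terms. The remaining check is confirming that $\rho>0$, so that dividing by $\rho$ is legitimate.
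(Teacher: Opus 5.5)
Your proof is correct, but it takes a different route from the paper's.

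\textbf{The paper's route.} The paper never compares \eqref{eq:base} with the perspective formulation. It takes a point of the continuous relaxation of \eqref{eq:base} and sets $z_e=y_e/\rho$. It then checks the envelopes \eqref{eq:bigm-1} with $\rho_L=0$ and $\rho_U=1$, using $0\le y_e\le\rho\le 1$ and $(1-\rho)(1-z_e)\ge 0$. Exactness then follows from Proposition~\ref{prop:bigm:exact}. The side benefit is that the relaxation of \eqref{eq:base} is shown to be no weaker than the Big-M relaxation.

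\textbf{Your route.} You sandwich \eqref{eq:base} between \eqref{eq:pers:oracle} (with $\R_{\text{RMC}}$) and $\C_{\G}$. The inclusion $\C_{\G}\subseteq\proj_{(\rho,\y)}\eqref{eq:base}$ is immediate, because every inequality in \eqref{eq:pers:x:simple} weakens \eqref{eq:pers:oracle-4}--\eqref{eq:pers:oracle-5}. The reverse inclusion reruns Part~2 of the proof of Theorem~\ref{theorem:main}, noting that only \eqref{eq:base1} and \eqref{eq:base4} are needed.

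\textbf{What each buys.} Your version supplies two things the paper's write-up leaves implicit.
\begin{itemize}
\item It proves both inclusions. The paper's containment argument alone only shows that binary points of \eqref{eq:base} project into $\C_{\G}$.
\item It derives $y_i=\rho x_i$ for binary $\x$. The Big-M route genuinely needs this, because in \eqref{eq:bigm} the singleton coordinates of $\z$ are the $x_i$ themselves. In the continuous relaxation of \eqref{eq:base}, $y_i/\rho$ need not equal $x_i$: with one product and $v_1=1$, the point $\rho=3/4$, $y_1=1/4$ forces $x_1=1/2\neq 1/3$.
\end{itemize}
What you give up is the comparison with the Big-M relaxation.

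\textbf{Two small corrections.}
\begin{itemize}
\item The unweighted sum $\sum_{e\ni i}y_e$ appears in all four of \eqref{eq:base1}--\eqref{eq:base4}, not only in \eqref{eq:base1}. Your reverse step uses the weighted sum in \eqref{eq:base4}, so state the typo correction for the whole block.
\item In your sanity check with $x_i=0$, the right-hand sides of \eqref{eq:base2} and \eqref{eq:base4} are $\sum_{e\not\ni i}v_e(y_e-\rho)\le 0$ and $\sum_{e\not\ni i}v_ey_e\ge 0$. They lie on the correct side of $0$ but need not equal it. This is harmless, since your main argument goes through \eqref{eq:pers:oracle-4}--\eqref{eq:pers:oracle-5}.
\end{itemize}
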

\noindent \textbf{Proof.} We prove this result by leveraging the established fact that~\eqref{eq:bigm} is an exact MIP formulation for~\eqref{eq:cg:set} (see Proposition~\ref{prop:bigm:exact}). Specifically, it suffices to show that the continuous relaxation of formulation~\eqref{eq:base} is contained within the continuous relaxation of the Big-M formulation~\eqref{eq:bigm}.

Let $(\rho, \y, \x)$ be an arbitrary feasible point in the continuous relaxation of~\eqref{eq:base}. By the definition of the perspective scaled set $\y \in \rho \cdot \R_{\text{RMC}}$, there exists a vector $\z \in \R_{\text{RMC}}$ such that $y_e = \rho z_e $ for all $e\in \E$. 

Because $\z \in \R_{\text{RMC}}$, the variables naturally satisfy $z_e \in [0,1]$ for each $e\in \E$. Coupling this with the non-negativity constraint $\rho \ge 0$, we immediately obtain $0 \le y_e \le \rho$. Furthermore, since $v_e > 0$ and $y_e \ge 0$ for all $e \in \E$, the probability identity constraint $\rho + \sum_{e \in \E} v_e y_e = 1$ strictly ensures that $\rho \le 1$.

Now, we verify that the point $(\rho, \y, \x, \z)$ satisfies the continuous relaxation of~\eqref{eq:bigm}. Constraint~\eqref{eq:bigm:0} holds by construction. We verify the Big-M envelopes~\eqref{eq:bigm-1} as follows:
\begin{itemize}
    \item The bounds $0 \le y_e \le \rho$ inherently satisfy $y_e \ge \rho_L z_e$ and $y_e \le \rho_L z_e + \rho - \rho_L$.
    \item Since $\rho \le 1$ and $z_e \ge 0$, we have $y_e = \rho z_e \le 1 \cdot z_e = z_e$, perfectly matching $y_e \le \rho_U z_e$.
    \item Since both $\rho \le 1$ and $z_e \le 1$, their complements are non-negative, yielding $(1-\rho)(1-z_e) \ge 0$. Expanding this product gives $\rho z_e \ge z_e + \rho - 1$. Substituting $y_e = \rho z_e$ directly yields $y_e \ge z_e + \rho - 1$, which aligns exactly with $y_e \ge \rho_U z_e + \rho - \rho_U$.
\end{itemize}

Since all inequalities in~\eqref{eq:bigm} are satisfied, the continuous relaxation of~\eqref{eq:base} is fully contained in that of~\eqref{eq:bigm}. Therefore, formulation~\eqref{eq:base} is also a valid (and potentially tighter) exact MIP formulation of~\eqref{eq:cg:set}. \hfill \Halmos

\vspace{1 em}

While valid, formulation~\eqref{eq:base} can be further tightened by iteratively adding the remaining inequalities from the constraint pool of~\eqref{eq:pers:oracle-4} and \eqref{eq:pers:oracle-5} or stronger multilinear inequalities, e.g., \eqref{eq:odd} and~\eqref{eq:running}. We describe the separation algorithms for these constraints below.
 
\subsubsection{Separation Oracle for Constraints~\eqref{eq:pers:oracle-4} and \eqref{eq:pers:oracle-5}}

\begin{algorithm}[H]
\caption{Separation Oracle for Constraints~\eqref{eq:pers:oracle-4} and \eqref{eq:pers:oracle-5}   }\label{alg:x:sepa}
\begin{algorithmic}[1]
    \REQUIRE A point $(\hat{\rho}, \hat{\y},\hat{\x})$; attraction value vector $\v$; a threshold $\epsilon>0$.
    \ENSURE A set of violated constraints, $\text{Cuts}$.

    \STATE Initialize $\text{Cuts} \gets \emptyset$.
    \FOR{$i \in V$}
        \STATE Initialize $A, B,C \gets \emptyset$. 
        \FOR{each edge $e \in \{e\in \E| i\notin e\} $}  
            \IF{$\hat{y}_i + \hat{y}_e  - \hat{\rho} >0$}
                \STATE $A \gets A\cup \{e\}$ 
            \ENDIF
            \IF{$ \hat{y}_e  < \hat{y}_i $}
                \STATE $B \gets B\cup \{e\}$
            \ELSE
            \STATE $C\gets C\cup \{e\}$
            \ENDIF 
        \ENDFOR 
        \IF{$\hat{x}_i < \hat{y}_i   + \sum_{e \in \E| i\in e } \hat{y}_e v_e + \sum_{e\in A} v_e (\hat{y}_i + \hat{y}_e  - \hat{\rho}) - \epsilon$}
            \STATE $\text{Cuts} \gets  x_i \ge y_i + \sum_{e\in \E|i\in e}y_e v_e + \sum_{e\in A}v_e (y_i +y_e -\rho)$.
        \ENDIF
        \IF{$\hat{x}_i > \hat{y}_i   + \sum_{e \in \E | i\in e } \hat{y}_e v_e +  \sum_{e\in B} v_e \hat{y}_e  + \sum_{e\in C} v_e \hat{y}_i  + \epsilon$}
           \STATE $\text{Cuts} \gets  x_i \le y_i + \sum_{e\in \E|i\in e}y_e v_e+ \sum_{e\in B}v_e y_e +   \sum_{e\in C} v_e y_i $.
        \ENDIF
    \ENDFOR
    \RETURN $\text{Cuts}$.
\end{algorithmic}
\end{algorithm}

 Given a point $(\hat{\rho}, \hat{\y},\hat{\x})$, the separation oracle checks whether the lower and upper bounds for each $\hat{x}_i$ constructed by $\hat{\rho}$ and $\hat{\y}$ are violated. The bounds are given by 
\begin{subequations}\label{eq:x:bound}
    \begin{alignat}{3} 
    & x_i  \ge  \hat{y}_i   + \sum_{e \in \E | i\in e} \hat{y}_e v_e + \sum_{e \in \E | i\notin e } v_e \max\{ 0, \hat{y}_i + \hat{y}_e  - \hat{\rho}  \} \label{eq:lower1} \\
    & x_i   \le \hat{y}_i + \sum_{e \in \E | i\in e } \hat{y}_e v_e + \sum_{e \in \E | i\notin e } v_e \min\{ \hat{y}_e  ,  \hat{y}_i \} \label{eq:upper1} .     
    \end{alignat}
\end{subequations}
If the current point violates the above inequalities, we add the corresponding constraint to the base model. We detail each step in Algorithm~\ref{alg:x:sepa}.

 \subsubsection{Separation Oracle for Odd-cycle Inequalities}\label{sec:sepa:odd}
The main difficulty in separating odd-cycle inequalities is that both the number of cycles $C$ and the number of odd subsets $D \subseteq C$ can grow exponentially.
\cite{barahona1986cut} reformulate the odd-cycle inequality in terms of edge variables and design a polynomial-time separation oracle based on shortest-path computations. We adapt their construction to our scaled space, where inequalities are multiplied by the no-purchase variable $\rho$.

Given a simple graph $\G(V,\E)$, we introduce a virtual node $o$ with fixed value $x_o = 1$, and connect it to every node $i \in V$. The resulting graph has nodes $V\cup \{o\}$ and edges $\{(i,o)| i \in V\} \cup \{(i,j)| (i,j)\in \E \}$. For each edge, we introduce a new variable $c_{ij}$ as follows  
\[c_{io} = x_i, c_{ij} = x_i + x_j -2z_{ij}. \]
This mapping establishes a one-to-one correspondence between $(\x,\z)$ and $\boldsymbol{c}$. Under this transformation, the odd-cycle inequality becomes
\begin{equation*}
    \sum_{(i,j)\in D} (1-c_{ij}) +  \sum_{(i,j)\in C \setminus D} c_{ij} \ge 1, C \text{ is a cycle and } D\subseteq C, |D| \text{ is odd} .
\end{equation*}
The left-hand side can be interpreted as the weight of cycle $C$, where edges in $D$ receive weight $1-c_{ij}$ and the remaining edges receive weight $c_{ij}$. The separation problem is therefore equivalent to finding a minimum-weight cycle under the constraint that an odd number of edges are selected with weight $1-c_{ij}$.

Following~\cite{barahona1986cut}, we solve this separation problem via an extended graph $\G^\prime$. For each node $i\in V\cup \{o\}$ in the original graph, $\G^{\prime}$ contains two nodes $i$ and $i^{\prime}$. $\G^{\prime}$ has four types of edges:
\[ \E^{\prime} = \bigl\{(i,j),(i^{\prime},j^{\prime}),(i,j^\prime), (i^{\prime},j)| (i,j)\in \E \cup \{(i,o)| i\in V\} \bigr\}.  \]
Given a candidate point $(\hat{\rho}, \hat{\y} )$,we define scaled edge lengths as follows:
\begin{subequations}\label{eq:weight:scale}
    \begin{alignat}{3}
         & w_{io} = w_{i^\prime o^\prime} = \hat{\rho} \cdot \hat{c}_{io} = \hat{\rho} \cdot \hat{x}_{i}= \hat{y}_i  && \forall i \in V \\ 
& w_{ij} = w_{i^\prime o^\prime} = \hat{\rho} \cdot \hat{c}_{ij} = \hat{\rho} \cdot (\hat{x}_i + \hat{x}_j -2\hat{z}_{ij}) =  \hat{y}_i  + \hat{y}_j - 2\hat{y}_{ij} && \forall (i,j) \in \E \setminus V \\
& w_{ij^{\prime}} = w_{i^{\prime}j} = \hat{\rho} \cdot (1- \hat{c}_{ij} ) =  \hat{\rho} \cdot ( 1-\hat{x_i} - \hat{x}_j + 2\hat{z}_{ij})= \hat{\rho} - \hat{y}_i  - \hat{y}_j +  2\hat{y}_{ij}  && \forall (i,j) \in \E \setminus V.
    \end{alignat}
\end{subequations} 
Edges connecting nodes of the same type correspond to weights $\hat{\rho}\hat{c}_{ij}$, while edges connecting primed and unprimed nodes correspond to weights $\hat{\rho}(1-\hat{c}_{ij})$.

We then compute, for each node $i\in V$, the shortest path from $i$ to $i^\prime$ in $\G^{\prime}$. By construction, any path from $i$ to $i'$ must traverse an odd number of ``cross" edges (i.e., edges of type $(u,v')$ or $(u',v)$). Hence, such a path corresponds exactly to selecting an odd number of edges weighted by $\hat{\rho}(1-\hat{c}_{ij})$ in the original graph. For example, a feasible path  $i\to j^{\prime}\to i^{\prime}$ 
contains one cross edge and one same-type edge, yielding a total length
\[ \hat{\rho}(1-\hat{c}_{ij}) +\hat{\rho} \hat{c}_{ij} = \hat{\rho},  \]
which matches the scaled right-hand side and therefore does not violate the inequality. Consequently, if there exists a violated odd-cycle inequality (i.e., one whose left-hand side is strictly smaller than $\hat{\rho}$), then the shortest path over all pairs $(i,i')$ will have length strictly less than $\hat{\rho}$ and corresponds to a violated cycle in the original graph. 

Computing one shortest path requires $\mathcal{O}(|V|^2)$ operations. Repeating this for all $i\in V$ yields an overall separation complexity of $\mathcal{O}(|V|^3)$. Algorithm~\ref{algo:odd} summarizes this separation procedure.

When separating odd-cycle inequalities for the Big-M formulation or the Conic formulation, we require an unscaled length $\w$ given as follows.
\begin{subequations}\label{eq:weight}
    \begin{alignat}{3}
         & w_{io} = w_{i^\prime o^\prime} =   \hat{c}_{io} = \hat{x}_i && \forall i\in V\notag \\ 
& w_{ij} = w_{i^\prime o^\prime} =   \hat{c}_{ij} = \hat{x}_i  + \hat{x}_j  - 2\hat{z}_{ij} && \forall (i,j) \in \E \setminus V \notag  \\
& w_{ij^{\prime}} = w_{i^{\prime}j} =  1- \hat{c}_{ij}  =  1 - \hat{x}_i  - \hat{x}_j +  2\hat{z}_{ij}  && \forall (i,j) \in \E \setminus V.\notag 
    \end{alignat}
\end{subequations} 
Accordingly, the length of the shortest path is compared with $1-\epsilon$ of Step~\ref{algo:odd:violate} in Algorithm~\ref{algo:odd}.

\begin{algorithm}[H]
\caption{Separation Oracle for Odd-cycle Inequality}
\label{algo:odd}
\begin{algorithmic}[1]
    \REQUIRE A point $(\hat{\rho}, \hat{\y} )$; attraction value vector $\v$; a threshold $\epsilon>0$.
    \ENSURE A set of violated constraints, $\text{Cuts}$.

    \STATE Initialize $\text{Cuts} \gets \emptyset$.
    \STATE Construct the extended graph $\G^{\prime}$ and edge length $\w$ via~\eqref{eq:weight:scale}. 
    \STATE  Find shortest path $\texttt{path}(i)$ from $i$ to $i^\prime$ over all $i\in V$. 
      \IF{ the length of path $\texttt{path}(i)$ is less than $\hat{\rho}(1 -\epsilon)$} \label{algo:odd:violate}
            \STATE  $C \gets (u,v) $ if  $ (u,v)$ or $(u^{\prime}, v^{\prime})  \in \texttt{path}(i)$ 
            \STATE $D \gets (u,v) $ if  $  (u,v^{\prime})$ or $ (u^{\prime},v)  \in \texttt{path}(i)$ 
          \STATE  $\text{Cuts} \gets \eqref{eq:odd}$ with cycle $C$ and odd-number subset $D$.
        \ENDIF   
    \RETURN $\text{Cuts}$.
\end{algorithmic}
\end{algorithm}

 \subsubsection{Separation Oracle for Running-Intersection Inequalities}\label{app:sep-RI} Finding a violated running-intersection inequality is nontrivial because a multi-purchase hypergraph can admit an exponential number of such inequalities. \citet{del2020impact} propose a separation oracle with complexity $O(|\E|(d2^d|\E|+2^{d\bar{m}}\bar{m}^2d))$, where $\bar{m}$ denotes the maximum number of neighbors of a central bundle $e_0$ considered by the algorithm. In many practical applications, the rank $d$ is relatively small, i.e., $d\le 3$ in our case study. Hence, this algorithm is polynomial in the number of candidate bundles $|\E|$, making it computationally efficient for practical instances. We illustrate this algorithm for our perspective formulation as follows. 

 We begin by constructing candidate running-intersection structures. Given a multi-purchase hypergraph $\G$, we identify a collection
 \[S=\{(e_0, O_{e_0}, P_{e_0})\},\]
 where $e_0$ is a central bundle. The set $O_{e_0}$ consists of running-intersection orderings $\tilde{e}_1,\dots, \tilde{e}_k$ intersected with $e_0$. A candidate running-intersection ordering $\tilde{e}_1,\dots, \tilde{e}_m$ satisifies: for any $f,g\in [m]$, $\tilde{e}_f \not \subset \tilde{e}_g $ and $\tilde{e}_f \not \supset \tilde{e}_g $.
This condition is imposed to control the number of the resulting inequalities while preserving tightness. Intuitively, when $\tilde{e}_f  \subset \tilde{e}_g $, the corresponding running-intersection inequality is redundant: it can be obtained by aggregating a running-intersection inequality involving only $\tilde{e}_f$ together with a trivial constraint $  z_{e_g} \le z_{\nu_g}$, where $\nu \in N(\tilde{e}_g)$. Excluding such nested intersections, therefore, leads to more compact inequalities without loss of strength. The mapping $P_{e_0}$ assigns to each intersection $\tilde{e}_k$ a set of neighbors, so that for any $e_k \in P_{e_0}(\tilde{e}_k)$, we have $e_k\cap e_0 = \tilde{e}_k$. In some cases, such as Figure~\ref{fig:ric}, $P_{e_0}(\tilde{e}_k)$ may include only one element.  Algorithm~\ref{algo:ri_structure} details the construction of all admissible running-intersection structures.

 \begin{algorithm}[H]
\caption{Construction of the Running Intersection Separation Structure}
\label{algo:ri_structure}
\begin{algorithmic}[1]
    \REQUIRE A multi-purchase hypergraph $\G$; a maximum number of neighbors $\bar{m}$.
    \ENSURE Running-intersection structures.

    \STATE Initialize structure $S \gets \emptyset$.
    \FOR{$e_0 \in \E$}
        \STATE Initialize $O_{e_0} \gets \emptyset$, $P_{e_0} \gets \emptyset$, and the subgraph edge set $\tilde{\E}_{e_0} \gets \emptyset$. 
        \STATE For each $e_k \in \E \setminus \{e_0\}$, let $f \leftarrow e_0 \cap e_k$, then $\tilde{\E}\gets f$ and add $e_k$ to $P_{e_0}(f)$.  
        \FOR{each subset $ H \subseteq \tilde{\E}_{e_0} $ with $|H|\le \bar{m}$}   
            \IF{ for any $\tilde{e}_f, \tilde{e}_g\in H$, $\tilde{e}_f \not \subset \tilde{e}_g $ and $\tilde{e}_f \not \supset \tilde{e}_g $.}
                \STATE Find a running-intersection order $(\tilde{e}_1,\dots, \tilde{e}_m) $ over $H$ and $O_{e_0}\gets (\tilde{e}_1,\dots, \tilde{e}_m)$.
            \ENDIF 
        \ENDFOR
        \STATE $S\gets (e_0, O_{e_0} , P_{e_0})$
    \ENDFOR
    \RETURN $S$.
\end{algorithmic}
\end{algorithm}

 Given running-intersection structures and a candidate solution $(\hat{\rho}, \hat{\y})$, the separation algorithm tries to find the most violated one. Recall the scaled running-intersection inequality is 
 \[\sum_{k\in [m]} y_{e_k} + \sum_{v \in e_0\setminus \bigcup_{k\in [m]}e_k} y_{v} \le  z_{e_0}+ \sum_{k\in [m]| N(e_0\cap e_k) \neq \emptyset} y_{\mu_k} + \rho (\omega - 1). \]
For each central bundle $e_0$ and an ordering $\tilde{e_1},\dots, \tilde{e}_m$, we can calculates $\omega$ by its definition. Hence, the separation task reduces to maximizing the left-hand side and minimizing the right-hand side variables. Specifically, for $k\in [m]$, we find the associated neighbor with the largest value
\[ \bar{e}_{k} = \argmax_{ c \in P_{e_0}(\tilde{e}_k)}~\hat{y}_{c}, \]
and the smallest single element in the intersection:
\[ \bar{\mu}_{k} = \argmin_{c \in N(\tilde{e}_k)} ~\hat{y}_{c}.\]
If the corresponding LHS is larger than the RHS, then this running-intersection inequality should be added to the base model. We detail the separation process in Algorithm~\ref{algo:ri_sep}.

\begin{algorithm}[ht]
\caption{Separation Oracle for Running Intersection Inequality}
\label{algo:ri_sep}
\begin{algorithmic}[1]
     \REQUIRE A point $(\hat{\rho}, \hat{\y} )$; attraction value vector $\v$; a threshold $\epsilon>0$.
    \ENSURE A set of violated constraints, $\text{Cuts}$.
    \FOR{$e_0 \in \E$}
    \FOR{$(\tilde{e}_1,\dots, \tilde{e}_m) \in O_{e_0}$}
        \STATE $\omega = |e_0\setminus \bigcup_{k\in [m]}e_{k}|+ |\{k\in [m] :  N(\tilde{e}_k) =\emptyset \}|$
        \STATE $\bar{e}_{k} = \argmax_{ c \in P_{e_0}(\tilde{e}_k)}~\hat{y}_c$ for each $k\in [m]$
        \STATE $\bar{\mu}_{k} = \argmin_{c \in N(\tilde{e}_k)} ~\hat{y}_{c}$ for each $k\in [m]$
        \IF{ $ \sum_{k\in [m]} \hat{y}_{\bar{e}_k} + \sum_{v\in e_0\smallsetminus \bigcup_{k\in [m]} \bar{e}_k} \hat{y}_{v}> \hat{y}_{e_0} + \sum_{k\in [m]| N(e_0\cap \bar{e}_k) \neq \emptyset} \hat{y}_{\bar{\mu}_k} + \hat{\rho} (\omega - 1) + \epsilon $}
        \STATE  $\text{Cuts} \gets  \sum_{k\in [m]} {y}_{\bar{e}_k} + \sum_{v\in e_0\smallsetminus \bigcup_{k\in [m]}\bar{e}_k } y_v \le y_{e_0} + \sum_{k\in [m]| N(e_0\cap \bar{e}_k) \neq \emptyset} {y}_{\bar{\mu}_k} + {\rho} (\omega - 1) $ 
        \ENDIF
     \ENDFOR
    \ENDFOR
    \RETURN $\text{Cuts}$.
\end{algorithmic}
\end{algorithm}

 \subsection{Cutting-Plane Implementation}\label{sec:cut} 
We implement a cutting-plane procedure to solve optimization problems over~\eqref{eq:pers:oracle}, such as the assortment optimization problem with objective function $\sum_{i\in \E}r_e y_e$. We initialize the procedure with the exact base formulation~\eqref{eq:base}. The algorithm iteratively solves the continuous relaxation of the current model and augments it with valid inequalities identified by the separation oracles proposed in Section~\ref{sec:base}. We set the maximum iterations limit to $K = 500$ and the tolerance parameter to $\epsilon = 1 \times 10^{-5}$; a cut is added only if its violation exceeds this threshold. The process terminates when no new cuts are detected, the iteration limit is reached, or the cumulative separation time exceeds $T = 800$ seconds. Let $\texttt{Base}_k$ denote the formulation obtained at the $k^{\text{th}}$ iteration. We solve the final formulation with binary constraints to recover an optimal integer solution. Algorithm~\ref{alg:framework} details this procedure.
 
\begin{algorithm}[H]
\caption{A Cutting-Plane Implementation}
\label{alg:framework}
\begin{algorithmic}[1]
    \REQUIRE A base formulation $\texttt{Base}_0$ \eqref{eq:base}; maximum iterations $K$; tolerance $\epsilon$; time limit $T$.
    \ENSURE An optimal integer solution.
 
    \STATE Record $t_{\texttt{start}} \leftarrow \text{CurrentTime}$.
    
    \FOR{$k \leftarrow 1$ to $K$}
        \STATE $(\hat{\rho}, \hat{\y}, \hat{\x}) \leftarrow \text{SolveLP}(\texttt{Base}_{k-1})$.
        \STATE Cuts $\leftarrow$ Call separation oracles Algorithm~\ref{alg:x:sepa}, Algorithm~\ref{algo:odd}, and Algorithm~\ref{algo:ri_sep} for solution $(\hat{\rho}, \hat{\y}, \hat{\x})$.
        \STATE $\texttt{Base}_{k} \leftarrow \text{Add Cuts to } \texttt{Base}_{k-1}$
        \IF{Cuts = $\emptyset$ or $\text{CurrentTime} - t_{\texttt{start}} > T$}
            \STATE \textbf{break};
        \ENDIF
    \ENDFOR
    
    \STATE Impose binary constraints on variables in $\texttt{Base}_{k}$ and solve the MIP formulation.  
    \RETURN $(\rho^*, \y^*, \x^*)$.
\end{algorithmic}
\end{algorithm}

\end{document}